\documentclass{amsart}

\usepackage{amsmath,amssymb,mathtools}
\usepackage{microtype}
\usepackage[hidelinks,hyperfootnotes=false]{hyperref}

\allowdisplaybreaks
\numberwithin{equation}{section}

\theoremstyle{plain}
\newtheorem{theorem}{Theorem}[section]
\newtheorem{proposition}[theorem]{Proposition}
\newtheorem{lemma}[theorem]{Lemma}
\newtheorem{corollary}[theorem]{Corollary}

\theoremstyle{definition}
\newtheorem{definition}[theorem]{Definition}

\theoremstyle{remark}
\newtheorem{remark}[theorem]{Remark}
\newtheorem{example}[theorem]{Example}

\newcommand{\C}{\mathbb C}
\newcommand{\D}{\mathbb D}
\newcommand{\T}{\mathbb T}
\newcommand{\PSH}{\mathrm{PSH}}
\newcommand{\cA}{\mathcal A}
\newcommand{\dist}{\operatorname{dist}}
\newcommand{\ord}{\operatorname{ord}}
\newcommand{\ddc}{dd^c}
\newcommand{\eps}{\varepsilon}

\begin{document}

\title[Green functions on certain B-regular domains]
{Continuity of pluricomplex Green functions with hypersurface poles on certain B-regular domains}

\author[H. H. Dao]{Huy Hoang Dao}
\address{Department of Mathematics, Hanoi National University of Education, Hanoi, Vietnam}
\email{daohuyhoang05121999@gmail.com}

\author[Q. D. Nguyen]{Quang Dieu Nguyen}
\address{Department of Mathematics, Hanoi National University of Education, Hanoi, Vietnam}
\email{ngquang.dieu@hnue.edu.vn}

\author[D. H. Tran]{Duc Hieu Tran}
\address{Hanoi-Amsterdam School for the Gifted, Hanoi, Vietnam}
\email{tranduchieu1709@gmail.com}

\subjclass[2020]{32U05, 32U15, 32H02}
\keywords{Pluricomplex Green function, complex hypersurface, analytic disc, Weierstrass preparation, sectorial deformation, Hartogs hull, Perron--Bremermann envelope, quasi upper bounded function, convex domain}
\date{}

\begin{abstract}
Let $\Omega\Subset\C^n$ and let $f$ define a reduced hypersurface divisor on a neighborhood of $\overline\Omega$, with $A=(f)|_\Omega$.
Our main result proves canonical bounded-envelope approximation of the desingularized Green function whenever the pair $(\Omega,A)$ admits a sectorial inward deformation (SID); if $\Omega$ is B-regular, this yields extended-real continuity of $G_{A,\Omega}$.
As an important consequence, the continuity conclusion holds for every B-regular convexifiable domain, without any smoothness assumption on the boundary trace of $A$.
The proof combines moving centers, parametric Weierstrass preparation, and a quantitative analytic-disc insertion argument whose Cartan gluing is arranged so that all target Rouch\'e margins are fixed before the large source root-distribution parameter; a bounded Kohn--Nirenberg-type model shows that the SID theorem goes beyond convexifiability.
\end{abstract}

\maketitle

\section{Introduction and preliminaries}\label{sec:intro}
\enlargethispage{3pt}

Let $\Omega\Subset\C^n$ be a bounded domain.  If
$u\in\PSH(\Omega)$, we denote by $\nu_u(z)$ its Lelong number at $z$.  If
$A$ is an effective divisor, then $\nu_A(z)$ denotes the multiplicity of $A$
at $z$; in the principal case $A=(f)$ one has
$\nu_A=\nu_{\log|f|}$.  Following L\'arusson and Sigurdsson \cite{LS99}, the
pluricomplex Green function with pole along $A$ is
\begin{equation}\label{eq:defG}
 G_{A,\Omega}(z)
 :=\sup\bigl\{u(z):u\in\PSH(\Omega),\ u\le0,\ \nu_u\ge\nu_A\bigr\},
 \qquad z\in\Omega.
\end{equation}
In this paper the divisor is principal: $A=(f)|_\Omega$, where
$f\not\equiv0$ is holomorphic on a neighborhood of $\overline\Omega$.  When
boundary points are involved, we use the same symbol $A$ for the zero set
$\{f=0\}$ in this fixed neighborhood; thus $A\cap\partial\Omega$ means
$\{f=0\}\cap\partial\Omega$.  We always assume
\[
 \{f=0\}\cap\Omega\ne\varnothing,
\]
since otherwise the pole set is empty and the continuity question considered
below is trivial.  Throughout the SID and convexifiability results, the extending divisor
$(f)$ is assumed to be reduced on a fixed open neighborhood of
$\overline\Omega$, not merely after restriction to $\Omega$.  Here reduced
means that $f$ has no repeated local irreducible factor, up to multiplication
by a nowhere-vanishing holomorphic function.  The notation
$\operatorname{Reg}(A)$ and $\operatorname{Sing}(A)$ near the boundary refers
to this extending reduced hypersurface.  In particular,
$df(p)\ne0$ at every $p\in\operatorname{Reg}(A)$, including boundary points.
Singular points of the extending hypersurface are allowed.  These standing
assumptions will not be repeated in every statement; Proposition~\ref{prop:Hartogs-fibers}
explicitly allows a positive multiplicity.

The function \eqref{eq:defG} was introduced by L\'arusson and Sigurdsson in
\cite{LS99} for poles along complex subspaces, and more generally in terms of
coherent ideal sheaves.  It extends the classical pluricomplex Green function
with one pole, as well as the case of finitely many weighted poles.  Their
point of view connects Lelong numbers, extremal plurisubharmonic functions,
and envelopes of analytic-disc functionals, and makes it possible to treat
positive-dimensional analytic pole sets.  Green functions with analytic
singularities, powers of ideals, and colliding poles were later studied from
other directions; see, for example, \cite{RS05,RT13}.

A basic regularity problem is the continuity of $G_{A,\Omega}$ away from its
pole set.  This question is already subtle when a hypersurface meets the
boundary of the domain.  In \cite[Theorem~3.9]{LS99}, L\'arusson and
Sigurdsson claimed an ``almost continuity'' statement under strong boundary
barriers.  Their proof considers bounded maximal solutions with boundary data
$\min\{-\log|f|,j\}$ and then identifies the upper regularization of their
increasing limit with the desingularized Green function.  The erratum
\cite{LSerr} points out that the comparison needed for this last
identification is not justified.  The difficulty occurs precisely at
$A\cap\partial\Omega$, where the limiting boundary datum $-\log|f|$ becomes
unbounded.  We shall use \cite[Proposition~3.2]{LS99} and
\cite[Theorem~3.3]{LS99}; these results are independent of the comparison
step withdrawn in the erratum.

Dieu \cite{Dieu05} later proved continuity in several geometric situations.
The method there is based on holomorphic hulls of Hartogs sets and on a radial
inward deformation near the boundary part of the divisor.  One of the main
ideas of \cite{Dieu05} is to encode the boundary modulus $|f|$ by a compact
Hartogs set in $U\times\C$ and to read a lower semicontinuous fiber function
from its holomorphic hull.  The comparison of this fiber function with the
desingularized Green function turns the continuity problem into a hull
problem.  We return to this Hartogs-hull viewpoint in Section~\ref{sec:hartogs},
where the bounded truncations admit a direct interpretation in terms of
vertical hull radii.

In the principal result of \cite{Dieu05}, the divisor is a finite union of
complex hyperplanes through a common point and a relative neighborhood of its
boundary trace can be pushed inside the domain by radial dilations.  The
argument uses the homogeneity of a defining polynomial in an essential way.
It therefore gives strong positive results for special pole geometry, but it
does not directly extend to a general hypersurface whose order of contact may
vary from one boundary point to another.  Proposition~3.1 and the remarks
following it in \cite{Dieu05} also show that hyperconvexity alone cannot replace
B-regularity in the continuity problem.  Remark~(a) gives a bounded hyperconvex
non-B-regular Hartogs domain for which the relevant Green function still has
good continuity behavior, whereas Remark~(b), starting from a bounded
discontinuous subharmonic function on the unit disc, gives a bounded
hyperconvex Hartogs domain which is not B-regular and for which the corresponding
hypersurface Green function is discontinuous.  Thus B-regularity is a genuine
boundary regularity hypothesis rather than a reformulation of hyperconvexity.

This difficulty is closely related to a broader circle of
Dirichlet problems with singular boundary data.  Nilsson--Wikstr\"om \cite{NW21} introduced the
pluripotential notion of quasiboundedness and used Jensen measures to study
generalized Dirichlet problems with unbounded boundary values.  Nilsson
\cite{Nilsson22} then proved continuity results for Perron--Bremermann
envelopes with extended-real continuous data on B-regular domains under
additional control of the singular growth, and obtained applications to the
complex Monge--Amp\`ere equation.  Boundary discontinuities concentrated on
small exceptional sets were studied further in \cite{Nilsson25}.  The present
problem is closely connected with this circle of ideas because the bounded
truncations used below are precisely Perron--Bremermann solutions.  However,
the main issue here is more specific: for
$\phi=-\log|f|$ one has to identify the increasing canonical envelopes
$PB_{\phi_j}$ with the desingularized Green function $\widetilde G_A$.  This
is exactly the identification that is missing in the argument of \cite{LS99}.
Instead of imposing an abstract strong-majorant condition, the proof below
uses the geometry of the divisor and analytic discs to recover the logarithmic
mass lost by truncation.  In this sense the argument complements the
Perron--Bremermann theory of \cite{NW21,Nilsson22,Nilsson25}; the more recent
Dirichlet theory on quasi B-regular domains in \cite{DLH26} leads back to the
same Green-function continuity problem from the viewpoint of unbounded
boundary data.  To the best of our knowledge, no general theorem after
\cite{Dieu05} covers a reduced hypersurface with arbitrary singular boundary
points under the assumptions considered here.

After multiplying \(f\) by a nonzero constant, we may suppose throughout
that
\begin{equation}\label{eq:norm}
 \|f\|_{\overline\Omega}\le1,
 \qquad
 \phi:=-\log|f|\ge0.
\end{equation}
By \cite[Proposition~3.2]{LS99},
\[
 \widetilde G_A:=G_{A,\Omega}-\log|f|
\]
extends plurisubharmonically across \(A\) and is locally bounded above there.
Thus the explicit logarithmic singularity is separated from the function
whose regularity will be studied.

We first recall the notion of B-regularity introduced by Sibony
\cite{Sibony87}.
\begin{definition}\label{def:Bregular}
A bounded domain \(\Omega\Subset\C^n\) is called \emph{B-regular} if for every
\(\psi\in C(\partial\Omega)\) there exists
\(u\in\PSH(\Omega)\cap C(\overline\Omega)\) such that
\(u|_{\partial\Omega}=\psi\).
\end{definition}

The corresponding solution of the Dirichlet problem is described by the
Perron--Bremermann envelope.  For \(\psi\in C(\partial\Omega)\), define
\begin{equation}\label{eq:PB-def}
 PB_{\psi}(z)
 :=\sup\Bigl\{u(z):u\in\PSH(\Omega),\
       \limsup_{\Omega\ni w\to\xi}u(w)\le\psi(\xi)
       \text{ for every }\xi\in\partial\Omega\Bigr\}.
\end{equation}
If \(\Omega\) is B-regular, then
\(PB_{\psi}\in\PSH(\Omega)\cap C(\overline\Omega)\), it is maximal in
\(\Omega\), and \(PB_{\psi}=\psi\) on \(\partial\Omega\); see, for example,
\cite[Chapter~4]{Klimek}.  We call \(PB_{\psi}\) the Perron--Bremermann
solution with boundary value \(\psi\).  If \(\psi\) is given on
\(\overline\Omega\), the notation \(PB_{\psi}\) means
\(PB_{\psi|_{\partial\Omega}}\).

For \(t>0\) put
\begin{equation}\label{eq:trunc}
 \eps_t:=e^{-t},\qquad
 \phi_t:=\min\{\phi,t\}
       =-\log\max\{|f|,\eps_t\},
 \qquad
 v_t:=P_\Omega\phi_t,
\end{equation}
where \(P_\Omega\phi_t\) is the largest plurisubharmonic minorant of
\(\phi_t\) on \(\Omega\).  These bounded envelopes will be the main
approximating objects in the proof.

We say that \(\Omega\) is \emph{convexifiable} if
\(\Omega=\Phi(D)\), where \(D\Subset\C^n\) is a bounded convex domain and
\(\Phi\) is biholomorphic between neighborhoods of \(\overline D\) and
\(\overline\Omega\).  The extension across the closures is part of the
definition.

We first record an important geometric consequence of the main SID theorem stated below.
\begin{theorem}\label{thm:convexifiable}
Let \(\Omega\Subset\C^n\), \(n\ge2\), be B-regular and convexifiable, and
let $f$ define a reduced divisor on a neighborhood of $\overline\Omega$, with $A=(f)|_\Omega$.  Then
\[
 G_{A,\Omega}\in C(\Omega,[-\infty,0]).
\]
Moreover, with \(v_j=P_\Omega\min\{-\log|f|,j\}\),
\[
 v_j\uparrow\widetilde G_A\quad\text{on }\Omega\setminus A,
 \qquad
 \left(\sup_jv_j\right)^*=\widetilde G_A\quad\text{on }\Omega.
\]
\end{theorem}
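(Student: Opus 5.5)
Theorem~\ref{thm:convexifiable} is stated as a consequence of the main SID theorem. The plan is therefore to verify that every pair $(\Omega,A)$ with $\Omega$ B-regular and convexifiable, and $f$ reduced near $\overline\Omega$, admits a sectorial inward deformation, and then to invoke the SID theorem. That theorem gives $v_j\uparrow\widetilde G_A$ off $A$ and $(\sup_j v_j)^*=\widetilde G_A$, and on B-regular domains it gives extended-real continuity of $G_{A,\Omega}$.

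Independently of the deformation, the structural part runs as follows. Since $\Omega$ is B-regular, each $v_j=P_\Omega\phi_j$ coincides with $PB_{\phi_j}$, because $\phi_j$ is continuous on $\overline\Omega$. Each $v_j$ is continuous on $\overline\Omega$ and increasing in $j$. Moreover $v_j+\log|f|\le G_{A,\Omega}$: the left side is psh, it is $\le\phi_j+\log|f|\le 0$, and it has Lelong numbers $\ge\nu_A$. Hence $v_j\le\widetilde G_A$. Once the reverse inequality $\widetilde G_A\le\lim v_j$ holds on $\Omega\setminus A$, the function $\widetilde G_A$ is an increasing limit of continuous functions, so it is lower semicontinuous; being psh, it is also upper semicontinuous. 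So $\widetilde G_A$ is continuous, and $G_{A,\Omega}=\widetilde G_A+\log|f|$ is continuous into $[-\infty,0]$.

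The reverse inequality is where the geometry enters. I would reduce to the convex case by transporting through the biholomorphism $\Phi$. This is legitimate because B-regularity, reducedness of $(f\circ\Phi)$, Green functions, Lelong numbers and the envelopes $P_\Omega$ are all invariant under a biholomorphism defined near the closures. So assume $\Omega=D$ is convex, and fix an interior point $a$ (after translation, $a=0$). The natural inward deformation is the family of dilations $z\mapsto rz$ with $r<1$ near $1$. Each dilation maps $\overline D$ into $D$, and it moves every boundary point, singular points of $A\cap\partial D$ included, strictly inside. The real work is to check that this global radial contraction satisfies the sectorial condition in the SID definition at every boundary point of $A$. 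That condition should be a local statement: along boundary-near pieces of the divisor, analytic discs with boundary on a sector pointing inward are transported into $D$ with uniform margin. For convex $D$ this margin should follow from the convexity inequality $\operatorname{dist}(rz,\partial D)\ge(1-r)\operatorname{dist}(0,\partial D)$.

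The main obstacle is matching this to the precise SID hypothesis. The deformation must act on the pair: $f\circ$(deformation) has to define a divisor that stays comparable to $A$ on a sector near each boundary point $p$. Plain dilation does not preserve $A$, unlike the homogeneous situation of \cite{Dieu05}. My first attempt would be to use the moving-center version. At each $p\in A\cap\partial D$, choose centers $a_p$ inside $D$ near $p$ and use the dilations centered at $a_p$. Then rely on the parametric Weierstrass preparation built into the SID theorem to absorb the non-invariance of $A$; convexity only has to supply a uniform inward cone at each boundary point. Finally, compactness of $\partial D\cap A$ would give finitely many sectors covering the boundary trace.
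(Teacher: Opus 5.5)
\paragraph{Verdict.} Your outer reduction matches the paper: transport to the convex model, exhibit an SID, and apply Theorem~\ref{thm:main}. The continuity bookkeeping is also fine, except for one small point: $P_\Omega\phi_j=PB_{\phi_j}$ uses that $-\phi_j=\max\{\log|f|,-j\}$ is plurisubharmonic, not merely that $\phi_j$ is continuous.

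\paragraph{The gap.} It lies in the one step with real content, the verification of Definition~\ref{def:sectorial}, which you have misread.
\begin{itemize}
\item The definition does not ask $f\circ S_\tau$ to stay comparable to $A$. Its only condition involving the values of $f$, \eqref{eq:finite-escape}, asks the opposite: the orbit $\tau\mapsto S_\tau(p)$ of each singular boundary point must \emph{leave} $A$ to finite order, because $\tau$ itself is the Weierstrass variable in Lemma~\ref{lem:sectorial-WPT}. So non-invariance of $A$ is exactly what is needed, and Weierstrass preparation cannot ``absorb'' anything unless this germ is already $\not\equiv0$.
\item With an arbitrary interior center $a$, this condition is unverified and can fail, even though the contraction moves $p$ strictly inside. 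For $f=z_1z_2$ on $B^3$ with $a=0$ and $p=(0,0,1)$, one has $f((1-\tau)p)\equiv0$ (compare Remark~\ref{rem:arkiv-sector}).
\item Your proposed repair, dilations about several centers $a_p$ near the boundary patched together by compactness, does not produce an SID. The definition requires one holomorphic family $S$ on $U\times\Delta_{t_0}$ with $S_\tau(\Omega)\subset\Omega$ globally. This is essential, because the proof of Theorem~\ref{thm:main} composes $S_{\tau_j}$ with near-extremal discs $g_j$ that range over all of $\Omega$.
\item The ``moving center'' in that proof is the disc center $S_{\tau_j}(p)$, not a varying center of dilation.
\item The sector $\Gamma_{\theta,t_0}$ is a sector of complex parameters, not an inward geometric cone. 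Real dilations alone do not address it.
\end{itemize}

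\paragraph{The missing idea.} Use a single contraction toward a point off the divisor.
\begin{itemize}
\item Take $z_*\in\Omega\setminus A$, $w_*=\Phi^{-1}(z_*)$, $d=\dist(w_*,\partial D)$, $R=\sup_{w\in D}|w-w_*|$, and $\mathcal S_\tau(w)=(1-\tau)w+\tau w_*$.
\item Let $\tau=t+is$ with $|\arg\tau|<\theta$ and $R\tan\theta<d/2$. Convexity gives $B(\mathcal S_t(w),td)\subset D$, while $|\mathcal S_\tau(w)-\mathcal S_t(w)|\le|s|R<td/2$.
\item Hence $\mathcal S_\tau(D)\subset D$, the depth is at least $(d/2)t\ge c_0|\tau|$ on all of $D$, and the displacement is at most $R|\tau|$. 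All of these survive conjugation by $\Phi$ with new constants.
\item Finite-order escape is then the identity theorem. The map $\tau\mapsto(f\circ\Phi)((1-\tau)x+\tau w_*)$ is holomorphic on a connected neighborhood of $[0,1]$ and equals $f(z_*)\ne0$ at $\tau=1$, so it cannot vanish identically near $0$.
\end{itemize}
This is the paper's Proposition~\ref{thm:geometry}, and the theorem then follows directly from Theorem~\ref{thm:main}.
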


Proposition~\ref{thm:geometry} shows that every convexifiable pair under consideration admits an SID.  Hence Theorem~\ref{thm:convexifiable} follows immediately from Theorem~\ref{thm:main}.

\begin{remark}\label{rem:Bregular-convexifiable}
For a bounded convex domain, Sibony's results imply B-regularity when the boundary contains no nonconstant analytic disc; conversely, B-regularity excludes such discs by the maximum principle.  Thus the two conditions are equivalent in the convex case; see \cite{Sibony87}.  Since B-regularity and boundary analytic discs are preserved by biholomorphisms extending across the closures, the same characterization holds for convexifiable domains.
\end{remark}

The principal result of \cite{Dieu05} treats a finite union of hyperplanes
through a common interior point, under a radial inward condition near the
boundary trace and suitable boundary barriers elsewhere.  Its Hartogs-hull
comparison uses the homogeneity of a defining polynomial.  For a general
hypersurface, the vanishing order along a moving one-variable family can vary,
and no such homogeneous identity is available.

The main theorem below replaces homogeneity by two quantitative ingredients.
On the target side, a complex sector of inward deformations provides both
boundary depth and the Weierstrass variable used at singular boundary points.
On the source side, the proof uses the root-cell construction underlying
\cite[Lemma~3.2 and the proof of Theorem~3.1]{DDF12}.  Since our target is a
domain in $\C^n$, the final Cartan gluing is performed by an explicit additive
Cousin splitting.  This point is useful for the parameter bookkeeping: for
each fixed approximation level, the target zero contours, the source Cartan
pair, and the gluing constant are fixed before the harmonic cutoff and before
the large root-distribution integer.  Consequently the Rouch\'e margins used
to preserve divisor intersections never depend on a parameter chosen later.

Let $\cA(\Omega)$ denote the holomorphic maps from a neighborhood of
$\overline\D$ into $\Omega$, and write
$\cA(\Omega,z_0)=\{h\in\cA(\Omega):h(0)=z_0\}$.

We isolate the geometric condition used below.  Its quantitative depth estimate is deliberately local: only a neighborhood of the boundary trace of the divisor must be pushed a distance comparable with the deformation parameter.

For $0<\theta<\pi/2$ and $t_0>0$ put
\[
 \Gamma_{\theta,t_0}
 :=\{\tau\in\C:0<|\tau|<t_0,\ |\arg\tau|<\theta\}.
\]
\begin{definition}\label{def:sectorial}
Let $f$ define a reduced divisor on a neighborhood of $\overline\Omega$, and put $A=(f)|_\Omega$.  The pair $(\Omega,A)$ is said to admit a
\emph{sectorial inward deformation}, abbreviated \emph{SID}, if there are neighborhoods
$U$ of $\overline\Omega$ and $V$ of $A\cap\partial\Omega$, constants
$0<\theta<\pi/2$, $t_0,c,C>0$, and a holomorphic map
\[
 S:U\times\Delta_{t_0}\longrightarrow\C^n,
 \qquad S_\tau(z):=S(z,\tau),
\]
such that $S_0=\mathrm{id}$ and, for every $\tau\in\Gamma_{\theta,t_0}$,
\begin{align}
 S_\tau(\Omega)&\subset\Omega,\label{eq:sector-selfmap}\\
 \sup_{z\in\overline\Omega}|S_\tau(z)-z|&\le C|\tau|,\label{eq:sector-displacement}\\
 \dist(S_\tau(z),\partial\Omega)&\ge c|\tau|,
 \qquad z\in V\cap\overline\Omega.\label{eq:sector-depth}
\end{align}
In addition, at every singular boundary point of the divisor,
\begin{equation}\label{eq:finite-escape}
 p\in\operatorname{Sing}(A)\cap\partial\Omega
 \quad\Longrightarrow\quad
 \tau\longmapsto f(S_\tau(p))\ \hbox{is not identically zero near }0.
\end{equation}
\end{definition}

Condition \eqref{eq:finite-escape} is independent of the chosen reduced defining function: multiplication of $f$ by a nowhere-vanishing holomorphic factor does not change whether the one-variable germ vanishes identically.  It says only that a singular boundary point is not trapped in the divisor by the whole deformation orbit.

\begin{remark}\label{rem:sector-geometry}
Three features of the SID condition are useful.  First, no point of $\Omega$ is required to be fixed.  A submean argument will select a nearby moving center on a smaller sector.  Second, the depth estimate \eqref{eq:sector-depth} is imposed only near $A\cap\partial\Omega$; singular points of $A$ contained in a compact subset of $\Omega$ are treated by ordinary local Weierstrass preparation.  Third, local invertibility follows automatically from $D_zS_0=I$ on a neighborhood of the compact set $\overline\Omega$: the maps $S_\tau$ are locally biholomorphic near $\overline\Omega$ for all sufficiently small $|\tau|$.  At regular boundary points of $A$ the insertion is transverse to the divisor and does not use \eqref{eq:finite-escape}; that condition is needed only at singular boundary points, where the parameter $\tau$ itself becomes the Weierstrass variable.
\end{remark}

\begin{remark}\label{rem:arkiv-sector}
The relation with \cite[Theorem~2.2]{Dieu05} is complementary rather than one of formal inclusion.  The radial hypothesis there is imposed only on a relative neighborhood of $A\cap\partial\Omega$ and is combined with holomorphic convexity and a boundary regularity condition on the complementary part of $\partial\Omega$; it need not produce a global holomorphic self-map of $\Omega$ as required by SID.  In particular, \cite[Corollary~2.9]{Dieu05} applies to bounded pseudoconvex strongly star-shaped domains and finite unions of complex hyperplanes through the star center, without assuming convexifiability or B-regularity.  Conversely, Theorem~\ref{thm:main} allows a general reduced hypersurface, but at each singular boundary point it requires finite-order escape.  For the homogeneous divisor in \cite{Dieu05}, radial dilation about the common intersection point preserves the divisor; hence at a singular boundary point $p$ one has $f(tp)=0$ for all $t$, and this radial family fails \eqref{eq:finite-escape}.  Thus Theorem~\ref{thm:main} directly recovers the radial situation of \cite{Dieu05} only when that radial motion extends to a global SID with the quantitative depth estimate and the finite-order escape condition is satisfied.  Neither theorem therefore subsumes the other.  On bounded convex B-regular domains, and more generally after a convexifying biholomorphism, Theorem~\ref{thm:convexifiable} nevertheless recovers the corresponding homogeneous hyperplane examples by using the contraction from Proposition~\ref{thm:geometry} toward a point outside $A$, rather than the dilation through the common intersection point.
\end{remark}

We can now state the main theorem of the paper.  Its approximation part does not require B-regularity; B-regularity enters only in the last step, where the bounded canonical envelopes become continuous Perron--Bremermann solutions.
\begin{theorem}\label{thm:main}
Let $\Omega\Subset\C^n$, $n\ge2$, be a domain.  Let $f\not\equiv0$ be holomorphic on an open neighborhood $U_f$ of $\overline\Omega$ and assume that the divisor $(f)$ is reduced on $U_f$.  Suppose that $\{f=0\}\cap\Omega\ne\varnothing$, and put $A=(f)|_\Omega$.  Assume that the pair $(\Omega,A)$ admits an SID in the sense of Definition~\ref{def:sectorial}.  Multiplying $f$ by a nonzero constant if necessary, assume $\|f\|_{\overline\Omega}\le1$.  Then, with
$v_j=P_\Omega\min\{-\log|f|,j\}$,
\begin{equation}\label{eq:SID-limit}
 v_j\nearrow\widetilde G_A\quad\text{on }\Omega\setminus A,
 \qquad
 \left(\sup_jv_j\right)^*=\widetilde G_A\quad\text{on }\Omega.
\end{equation}
If, in addition, $\Omega$ is B-regular, then
\begin{equation}\label{eq:PB-limit}
 PB_{\phi_j}\nearrow\widetilde G_A\quad\text{on }\Omega\setminus A,
 \qquad
 \left(\sup_jPB_{\phi_j}\right)^*=\widetilde G_A,
\end{equation}
and
\[
 G_{A,\Omega}\in C(\Omega,[-\infty,0]).
\]
\end{theorem}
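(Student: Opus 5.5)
The argument has three parts: an easy upper bound, a hard lower bound produced by analytic discs, and a formal passage to continuity.

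\textbf{Upper bound and monotonicity.} Each $v_j=P_\Omega\phi_j$ is plurisubharmonic and bounded, and $v_j\le\phi_j\le-\log|f|$. Hence $v_j+\log|f|$ is plurisubharmonic, nonpositive, and has Lelong numbers at least $\nu_A$. Therefore $v_j+\log|f|\le G_{A,\Omega}$, that is, $v_j\le\widetilde G_A$ on $\Omega\setminus A$, and this extends across $A$ by \cite[Proposition~3.2]{LS99}. Monotonicity in $j$ is clear. Put $v:=(\sup_jv_j)^*$. Then $v\le\widetilde G_A$, and $v=\sup_jv_j$ off a pluripolar set. The whole difficulty is the reverse inequality $\widetilde G_A\le\sup_jv_j$ on $\Omega\setminus A$.

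\textbf{Lower bound by discs.} I use the disc formula of \cite[Theorem~3.3]{LS99}:
\[
 G_{A,\Omega}(z_0)=\inf\Bigl\{\sum_{\zeta\in h^{-1}(A)}\nu_A(h(\zeta))\,\ord_\zeta(f\circ h)\log|\zeta| : h\in\cA(\Omega,z_0)\Bigr\}.
\]
For $P_\Omega\phi_t$ there is the Poletsky formula $v_t(z_0)=\inf_h\int_\T\phi_t\circ h\,d\sigma$. By Jensen,
\[
 \int_\T-\log|f\circ h|\,d\sigma=-\log|f(z_0)|+\sum\ord\log|\zeta|.
\]
So it suffices to show the following. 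Given $z_0\notin A$, $\eta>0$, and a disc $h\in\cA(\Omega,z_0)$ that is nearly extremal for $G_{A,\Omega}(z_0)$, there are $j$ and a disc $\tilde h$ with the same number of divisor intersections (with multiplicity), at nearly the same points, such that $|f\circ\tilde h|\ge\eps_j$ on $\T$ up to an $\eta$ error in the boundary integral. In other words, the truncation must lose no logarithmic mass.

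\textbf{Constructing $\tilde h$.} The danger is the set of points of $\T$ where $h$ comes close to $A\cap\partial\Omega$. I proceed as follows.
\begin{enumerate}
\item Compose with $S_\tau$ for $\tau\in\Gamma_{\theta,t_0}$ small. By \eqref{eq:sector-depth}, $S_\tau\circ h$ stays at distance $c|\tau|$ from $\partial\Omega$ near $V$. By \eqref{eq:sector-displacement} and Rouch\'e, its zeros of $f$ move only slightly.
\item The center moves. Using a submean argument on $\tau$ in a smaller sector, choose $\tau$ so that $\widetilde G_A(S_\tau(z_0))$ stays close to $\widetilde G_A(z_0)$; this uses plurisubharmonicity and upper semicontinuity.
\item Remove the arcs where $S_\tau\circ h$ lies near $A$ in the deep zone. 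There, insert analytic discs that leave the divisor. At regular boundary points of $A$, use a transverse direction, since $df\ne0$. At singular boundary points, use $\tau$ itself as the Weierstrass variable, by \eqref{eq:finite-escape} and parametric Weierstrass preparation. Within the depth margin $c|\tau|$, this makes $|f|$ bounded below along the inserted pieces, apart from small-measure arcs. There the root-cell construction of \cite{DDF12} spreads the zeros, so that $\int\log|f|$ on those arcs is controlled. This is the root-distribution parameter $N$.
\item Glue the pieces by a Cartan/Cousin splitting in $\C^n$. The order of parameters matters: first fix the target zero contours and Rouch\'e margins, then the gluing constant, then the harmonic cutoff, and only last a large $N$. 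Then the glued disc keeps all divisor intersections. Finally let $j\to\infty$.
\end{enumerate}
This gives $\sup_jv_j\ge\widetilde G_A-O(\eta)$ on $\Omega\setminus A$. With the upper bound this yields the first equality in \eqref{eq:SID-limit}. The second follows because $(\sup_jv_j)^*=\widetilde G_A$ off a pluripolar set, hence everywhere, both sides being plurisubharmonic.

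\textbf{B-regular case.} When $\Omega$ is B-regular, $\phi_j$ is continuous on $\overline\Omega$. Then $P_\Omega\phi_j$ coincides with $PB_{\phi_j}$, because the latter is continuous, maximal and below $\phi_j$ by the comparison principle, which gives \eqref{eq:PB-limit}. Consequently $\widetilde G_A$ is an increasing limit of continuous functions, hence lower semicontinuous on $\Omega\setminus A$. It is also upper semicontinuous, so it is continuous there. The same increasing-limit argument applies near $A$, since the extension of \cite{LS99} is the plurisubharmonic function equal to $\sup v_j$ up to regularization and is locally the limit of continuous functions. Thus $G_{A,\Omega}=\widetilde G_A+\log|f|$ is continuous with values in $[-\infty,0]$, and at points of $A$ it tends to $-\infty$.

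The main obstacle is step 3 together with the gluing in step 4. The Rouch\'e margins must be fixed before $N$, and the estimates at singular boundary points must hold uniformly in the moving center.
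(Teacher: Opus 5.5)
\textbf{The reduction runs the wrong way.} Your ``lower bound by discs'' step reverses the needed inequality. Since $v_j(z_0)=\inf_g\int_\T\phi_j\circ g\,d\sigma$ is an infimum, exhibiting one disc $\tilde h$ with
\[
\int_\T\phi_j\circ\tilde h\,d\sigma\approx-\log|f(z_0)|+H_A(h)\approx\widetilde G_A(z_0)
\]
only yields $v_j(z_0)\le\widetilde G_A(z_0)+O(\eta)$. That is the trivial inequality from your first paragraph.

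In fact your sufficient condition holds on every bounded domain, with no SID at all. Take $\tilde h(\zeta):=h(r\zeta)$ with $r<1$ close to $1$ and no zero of $f\circ h$ on $|\zeta|=r$. Then $|f\circ\tilde h|$ has a positive minimum on $\T$, so it is $\ge\eps_j$ for large $j$. Moreover $H_A(\tilde h)=\sum_{|a|<r}\ord_a(f\circ h)\log(|a|/r)\to H_A(h)$. So your steps 1--4 are aimed at a statement that needs none of them, and the actual content of the theorem is never addressed.

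\textbf{What the paper proves instead.} The goal is $\widetilde G_A(p)\le v_j(p)+o(1)$, which needs the opposite transfer. Start from an arbitrary near-minimizer $g_j\in\cA(\Omega,p)$ of the \emph{truncated} functional, $\int_\T\phi_j\circ g_j\,d\sigma\le v_j(p)+1/j$, and build a competitor for $H_A$ that costs no more. Such a $g_j$ may run along $A$ (in particular near $A\cap\partial\Omega$) on boundary arcs where $-\log|f\circ g_j|\gg j$. That is exactly the mass the truncation hides.

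The paper proceeds in four moves.
\begin{enumerate}
\item It pushes inward: $h_j=S_{\tau_j}\circ g_j$ with $|\tau_j|=t_j=A_0e^{-j/m_*}$, and the moving center $p_j=S_{\tau_j}(p)$ is chosen so that $\widetilde G_A(p_j)\ge\widetilde G_A(p)-o(1)$.
\item With $\Psi_j:=-\log|f\circ h_j|-\phi_j\circ g_j$, it shows the loss set $B_j=\{\Psi_j>j^{-1/2}\}$ has $\sigma(B_j)=O(1/j)$.
\item Over $B_j$ it inserts local families that \emph{cross} $A$. At regular boundary points this uses a transverse coordinate; at singular boundary points, $\tau$ is the Weierstrass variable; in the interior, ordinary Weierstrass preparation is used. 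The scale $t_j$ is tuned so that a parameter disc of radius at most $\kappa t_j$ already gives weight $\le\log|f(y)|+j$.
\item The new zeros $b$ are spread by root cells and preserved through the Cartan gluing by Rouch\'e. They contribute $\sum\log|b|\le-\int_{B_j}\Psi_j\,d\sigma+o(1)$, because the $C_0$ and $O(\log j)$ losses are paid only on a set of measure $O(1/j)$.
\end{enumerate}
Jensen at $p_j$ then gives $\phi(p_j)+H_A(H_j)\le v_j(p)+o(1)$, and \eqref{eq:discG} gives $\widetilde G_A(p_j)\le v_j(p)+o(1)$.

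Your step 3 inserts discs to keep $|f|$ bounded below, i.e.\ to avoid $A$. That is the mechanism of the trivial direction, not of this one.

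A minor point: continuity of $\widetilde G_A$ across $A$ is neither justified nor needed. Local boundedness above from \cite[Proposition~3.2]{LS99} already gives $G_{A,\Omega}\to-\infty$ at $A$.
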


The SID hypothesis is genuinely weaker than convexifiability.  We now give a non-convexifiable Kohn--Nirenberg-type model for which the singular-boundary part of Theorem~\ref{thm:main} is nonvacuous.
\begin{example}\label{ex:KN-sectorial}
Put
\[
 P(z):=|z|^8+\frac{15}{7}|z|^2\operatorname{Re}z^6,
 \qquad
 \rho(z,w):=|w|^2+P(z)+|z|^{10}-1,
\]
and
\[
 \Omega_{\rm KN}:=\{(z,w)\in\C^2:\rho(z,w)<0\}.
\]
The bound $P(z)\ge-\frac87|z|^8$, together with the term $|z|^{10}$, shows that $\Omega_{\rm KN}$ is bounded.  Moreover,
\[
 \rho_{z\bar z}=16|z|^6+15\operatorname{Re}z^6+25|z|^8
 \ge |z|^6+25|z|^8,
 \qquad \rho_{w\bar w}=1,
\]
so $\Omega_{\rm KN}$ is pseudoconvex.  For the real flow of $X=(-z,-4w)$,
\[
 d\rho(X)=-8(\rho+1)-2|z|^{10},
\]
which is strictly negative on $\partial\Omega_{\rm KN}$.  Hence the boundary is smooth; it is real analytic by the defining equation.  Also
\[
 \rho(e^{-s}z,e^{-4s}w)
 =e^{-8s}\rho(z,w)-(1-e^{-8s})
 +(e^{-10s}-e^{-8s})|z|^{10},
 \qquad s>0.
\]
Thus the real flow stays in the domain and tends to the origin, so $\Omega_{\rm KN}$ is connected.  The Levi form is positive definite on the boundary whenever $z\ne0$; hence the weakly pseudoconvex boundary set is contained in
$\{(0,w):|w|=1\}$ and has vanishing two-dimensional Hausdorff measure.
Sibony's criterion therefore gives B-regularity \cite{Sibony87}.

Set
\[
 S_\tau(z,w):=(e^{-\tau}z,e^{-4\tau}w).
\]
A uniform Taylor estimate on a fixed boundary collar, together with compactness of its complement, gives constants $\theta,t_0,c,C>0$ such that, for $\tau\in\Gamma_{\theta,t_0}$,
\[
 S_\tau(\Omega_{\rm KN})\subset\Omega_{\rm KN},\qquad
 \dist(S_\tau(\Omega_{\rm KN}),\partial\Omega_{\rm KN})\ge c|\tau|,
 \qquad
 \|S_\tau-\mathrm{id}\|_{\overline\Omega_{\rm KN}}\le C|\tau|.
\]

At $p=(0,1)$, with $\eta=2(w-1)$, the boundary equation becomes
\[
 \operatorname{Re}\eta+P(z)+|z|^{10}+\frac14|\eta|^2=0.
\]
Its degree-eight tangential part is the classical Kohn--Nirenberg polynomial \cite{KN73}.  For completeness, the local supporting obstruction can be seen directly.  If a smooth holomorphic supporting hypersurface through $p$ existed, its tangent
space would have to be the complex tangent space of $\partial\Omega_{\rm KN}$
at $p$; otherwise the restriction of the defining function to the
hypersurface would change sign.  Hence, in the coordinates $(z,\eta)$, it
would be a graph $\eta=h(z)$ with $h(0)=0$ and $h'(0)=0$.  If the first nonzero term of $h$ has degree smaller than eight, its harmonic real part has a negative direction and dominates the defining equation.  If its degree is greater than eight, or if $h\equiv0$, a negative direction of $P$ already forces the graph into the domain.  In the remaining case
$h(z)=cz^8+O(z^9)$, take
\[
 \theta_k=\frac{\pi}{6}+\frac{k\pi}{3},\qquad 0\le k\le5.
\]
Then $P(re^{i\theta_k})=-\frac87r^8$, while
$\sum_{k=0}^5\operatorname{Re}(ce^{8i\theta_k})=0$; hence for at least one
$k$ the degree-eight contribution from $\operatorname{Re}h$ is nonpositive.
The terms $|z|^{10}$ and $|\eta|^2/4$ have higher order, so the graph again
enters $\Omega_{\rm KN}$, a contradiction.  Thus there is no smooth local
holomorphic supporting hypersurface at $p$.  Since local convexifiability
would pull a complex supporting hyperplane back to such a smooth hypersurface,
$\Omega_{\rm KN}$ is not locally convexifiable at $p$, and hence is not
convexifiable.

The SID theorem also applies to a divisor with a genuine singular boundary point.  For $\lambda>0$ let
\[
 f_\lambda(z,w):=(w-1)^2-\lambda z^2,
 \qquad A_\lambda:=(f_\lambda)|_{\Omega_{\rm KN}}.
\]
The extending zero set is reduced and has its only singular point at $(0,1)$.  It meets the domain, since for sufficiently small $t>0$,
\[
 \rho(t,1-\sqrt\lambda\,t)
 =-2\sqrt\lambda\,t+\lambda t^2+\frac{22}{7}t^8+t^{10}<0.
\]
Finally,
\[
 f_\lambda(S_\tau(0,1))=(e^{-4\tau}-1)^2,
\]
which has order two at $\tau=0$.  Thus $(\Omega_{\rm KN},A_\lambda)$ admits an SID, while $\Omega_{\rm KN}$ is not convexifiable.
\end{example}

\subsection{Disc formulas and bounded truncations}

For an extended real-valued function $\psi$ on $\Omega$, set
\[
 P_\Omega\psi(z)
 :=\sup\{u(z):u\in\PSH(\Omega),\ u\le\psi\ \hbox{on }\Omega\}.
\]
For the divisor $A=(f)$ and $g\in\cA(\Omega)$ with $f\circ g\not\equiv0$, define
\[
 H_A(g):=\sum_{a\in\D:\,f(g(a))=0}\ord_a(f\circ g)\log|a|.
\]
The precise divisor disc formula below, involving the pullback-divisor weight $H_A$, is due to L\'arusson and Sigurdsson \cite[Theorem~3.3]{LS99}.  Drinovec Drnov\v{s}ek and Forstneri\v{c} later gave a new direct treatment of the associated Lelong and reduced Lelong functionals: applied with $\alpha=\nu_A$, \cite[Theorem~3.1]{DDF12} identifies their envelopes with the same extremal Green function $G_{A,\Omega}$.  This is closely related to, but not literally the formula below, since the Lelong functional in \cite{DDF12} is defined from $\alpha(g(a))$ and the multiplicity of the analytic disc at $a$, whereas $H_A$ records the full intersection order $\operatorname{ord}_a(f\circ g)$.  We record the L\'arusson--Sigurdsson divisor formula together with two immediate consequences used repeatedly.

\begin{proposition}\label{prop:obstacle-disc}
Let $\Omega\Subset\C^n$ be a domain.  Then
\begin{equation}\label{eq:obstacle}
 \widetilde G_A=P_\Omega\phi
 =\sup\{u:u\in\PSH(\Omega),\ u\le-\log|f|\}.
\end{equation}
Moreover, for $z\in\Omega\setminus A$,
\begin{align}
 G_{A,\Omega}(z)
 &=\inf_{g\in\cA(\Omega,z)}H_A(g),\label{eq:discG}\\
 \widetilde G_A(z)
 &=\inf_{g\in\cA(\Omega,z)}
   \int_{\T}-\log|f(g(\zeta))|\,d\sigma(\zeta),\label{eq:discU}
\end{align}
where $d\sigma(e^{it})=dt/(2\pi)$.
\end{proposition}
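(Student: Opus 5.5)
The plan is to prove the three assertions in order, first \eqref{eq:obstacle}, then \eqref{eq:discG}, then \eqref{eq:discU}. Each later step uses only the earlier ones, \cite[Proposition~3.2]{LS99}, and the L\'arusson--Sigurdsson disc formula \cite[Theorem~3.3]{LS99}.

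\emph{Step 1: the obstacle identity \eqref{eq:obstacle}.} For ``$\le$'': by \cite[Proposition~3.2]{LS99}, $\widetilde G_A=G_{A,\Omega}-\log|f|$ extends plurisubharmonically across $A$, and $G_{A,\Omega}\le0$ gives $\widetilde G_A\le-\log|f|=\phi$. Hence $\widetilde G_A\le P_\Omega\phi$.

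For ``$\ge$'': take $u\in\PSH(\Omega)$ with $u\le\phi$, and set $w:=u+\log|f|$. Then $w\le0$ on $\Omega$. Also $u$ is locally bounded above, because it lies below $\phi$, which is finite off $A$ and bounded above near $A$ by the normalization \eqref{eq:norm}. So $\nu_u\ge0$, and therefore
\[
 \nu_w\ge\nu_{\log|f|}=\nu_A .
\]
Thus $w$ is a competitor in \eqref{eq:defG}, so $w\le G_{A,\Omega}$, that is, $u\le\widetilde G_A$ off $A$. Since $A$ is pluripolar and both sides are plurisubharmonic, the inequality holds everywhere. Taking the supremum over $u$ gives $P_\Omega\phi\le\widetilde G_A$.

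\emph{Step 2: the divisor formula \eqref{eq:discG}.} This is exactly \cite[Theorem~3.3]{LS99} for the principal divisor $A=(f)$, with the weight $H_A$ as defined above. I would only check that the multiplicity convention matches: the theorem uses the pullback divisor $g^*A$, whose coefficient at $a$ is $\ord_a(f\circ g)$. Discs with $f\circ g\equiv0$ cannot occur, since $z\notin A$.

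\emph{Step 3: the formula \eqref{eq:discU}.} For $g\in\cA(\Omega,z)$ with $z\notin A$, apply the Riesz--Jensen formula to $\log|f\circ g|$ on $\overline\D$:
\[
 \log|f(g(0))|=\int_\T\log|f(g(\zeta))|\,d\sigma(\zeta)+\sum_{a\in\D}\ord_a(f\circ g)\log|a| .
\]
Zeros on $\T$ contribute $\log|a|=0$; this is legitimate because $g$ is holomorphic on a neighbourhood of $\overline\D$, so $\log|f\circ g|$ is integrable on $\T$. Rearranging,
\[
 H_A(g)-\log|f(z)|=\int_\T-\log|f(g(\zeta))|\,d\sigma(\zeta).
\]
Taking the infimum over $g$ and using Step 2 together with $\widetilde G_A(z)=G_{A,\Omega}(z)-\log|f(z)|$ gives \eqref{eq:discU}.

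The only step needing care is Step 1, and specifically the Lelong-number inequality for $w=u+\log|f|$: one must confirm that $u$ is plurisubharmonic, not identically $-\infty$ if we take $u\ne-\infty$, so that $\nu_u\ge0$ holds and $\nu$ is additive. Everything else is either a cited result or the Jensen identity.
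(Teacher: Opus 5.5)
Your proof is correct and follows the paper's argument essentially step for step: the competitor $u+\log|f|$ in \eqref{eq:defG}, extended across the pluripolar set $A$, gives the obstacle identity; \eqref{eq:discG} is cited from \cite[Theorem~3.3]{LS99}; and Jensen's formula at $g(0)=z$ gives \eqref{eq:discU}. One small correction: $\phi=-\log|f|$ is not bounded above near $A$, since the normalization \eqref{eq:norm} only gives $\phi\ge0$. This is harmless, because every plurisubharmonic $u\not\equiv-\infty$ is locally bounded above by upper semicontinuity and satisfies $\nu_u\ge0$ automatically.
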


\begin{proof}
Since $G_{A,\Omega}\le0$, we have $\widetilde G_A\le-\log|f|=\phi$.  Conversely, if $u\in\PSH(\Omega)$ and $u\le\phi$, then
\[
 u+\log|f|\le0,
 \qquad
 \nu_{u+\log|f|}\ge\nu_{\log|f|}=\nu_A.
\]
Hence $u+\log|f|\le G_{A,\Omega}$ by \eqref{eq:defG}, and therefore $u\le\widetilde G_A$.  This proves \eqref{eq:obstacle}.

The preceding comparison is first read on $\Omega\setminus A$.  Both $u$ and the
plurisubharmonic extension of $\widetilde G_A$ satisfy
\[
 u(p)=\limsup_{\Omega\setminus A\ni z\to p}u(z),\qquad
 \widetilde G_A(p)=\limsup_{\Omega\setminus A\ni z\to p}\widetilde G_A(z),
 \qquad p\in A,
\]
by the submean property (equivalently, by removability across the analytic
hypersurface).  Hence $u\le\widetilde G_A$ extends across $A$, and so does
\eqref{eq:obstacle}.

Formula \eqref{eq:discG} is the cited theorem.  If $F=f\circ g$, Jensen's formula gives
\[
 \int_{\T}-\log|F|\,d\sigma
 =-\log|F(0)|
 +\sum_{F(a)=0}\ord_a(F)\log|a|.
\]
Here $F(0)\ne0$ and $F$ extends past $\overline\D$.  Its boundary
zeros are finite and cause only integrable logarithmic
singularities.  Jensen's formula follows first when there are
no boundary zeros, and the general case follows by applying
it on circles of radii tending to $1$.  Notice the sign:
$\log|a|<0$ for every zero inside the disc.
Taking the infimum over $g\in\cA(\Omega,z)$ and using \eqref{eq:discG} gives \eqref{eq:discU}.
\end{proof}

The function $\phi_t$ in \eqref{eq:trunc} is finite and continuous.  Since
\[
 -\phi_t=\max\{\log|f|,-t\}\in\PSH(\Omega),
\]
it is also plurisuperharmonic.  Poletsky's theorem gives
\begin{equation}\label{eq:vt-disc}
 v_t(z)
 =\inf_{g\in\cA(\Omega,z)}
 \int_{\T}\phi_t(g(\zeta))\,d\sigma(\zeta);
\end{equation}
see \cite{Poletsky91,Rosay03,DDF12}.  From \eqref{eq:obstacle} we obtain
\begin{equation}\label{eq:vleU}
 0\le v_t\le\widetilde G_A,
 \qquad
 v_s\le v_t\quad(0<s<t).
\end{equation}
Thus only the reverse inequality in the limit remains to be proved.

\begin{definition}\label{def:qub}
Following \cite[Definition 2.4]{DLH26}, with the terminology adapted from
Nilsson--Wikstr\"om \cite{NW21}, a function $u\in\PSH(\Omega)$ is called
quasi upper bounded if there are upper bounded functions
$u_j\in\PSH(\Omega)$ such that $u_j\nearrow u$ outside a pluripolar
subset of $\Omega$.
\end{definition}

The truncations in \eqref{eq:trunc} have an intrinsic interpretation which will be useful later.  Namely, for every $t>0$,
\begin{equation}\label{eq:canonical-trunc}
 v_t=P_\Omega\min\{\widetilde G_A,t\}.
\end{equation}
Indeed, $v_t\le\widetilde G_A$ by \eqref{eq:vleU} and $v_t\le t$, hence $v_t$ is admissible for the envelope on the right-hand side of \eqref{eq:canonical-trunc}.  Conversely,
\[
 \min\{\widetilde G_A,t\}\le \min\{\phi,t\}=\phi_t,
\]
so every plurisubharmonic minorant of $\min\{\widetilde G_A,t\}$ is admissible for $P_\Omega\phi_t=v_t$.  This proves \eqref{eq:canonical-trunc}.

Thus the convergence of $v_j$ identifies the canonical bounded envelopes
of $\widetilde G_A$ and, in the terminology of \cite{DLH26,NW21}, establishes
its quasi upper boundedness.  Theorem~\ref{thm:main} proves this convergence
pointwise off $A$.

The bounded truncations have a simple Perron--Bremermann interpretation.
\begin{proposition}
\label{prop:PB-trunc}
Assume in addition that $\Omega$ is B-regular.  Then, for every $t>0$,
\begin{equation}\label{eq:PB=canonical}
 PB_{\phi_t}=v_t=P_\Omega\phi_t
     =P_\Omega\min\{\widetilde G_A,t\}
 \quad\hbox{on }\Omega.
\end{equation}
In particular, $PB_{\phi_s}\le PB_{\phi_t}$ for $0<s<t$, and
\[
 PB_{\phi_t}|_{\partial\Omega}=\phi_t|_{\partial\Omega}
 \nearrow -\log|f|\big|_{\partial\Omega}
\]
in the extended-real sense.
\end{proposition}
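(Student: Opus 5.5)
The plan is to show the two inequalities $PB_{\phi_t}\le v_t$ and $v_t\le PB_{\phi_t}$. The remaining identities in \eqref{eq:PB=canonical} are then immediate from \eqref{eq:canonical-trunc}. The monotonicity statement follows from \eqref{eq:vleU}, and the boundary statement is elementary.

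\emph{Step 1: the boundary datum.} The function $\phi_t=-\log\max\{|f|,\eps_t\}$ is continuous on a neighborhood of $\overline\Omega$, so $\psi:=\phi_t|_{\partial\Omega}\in C(\partial\Omega)$. B-regularity then gives $PB_{\phi_t}\in\PSH(\Omega)\cap C(\overline\Omega)$, maximal in $\Omega$, with boundary values $\phi_t$.

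\emph{Step 2: $v_t\le PB_{\phi_t}$.} Let $u\in\PSH(\Omega)$ with $u\le\phi_t$ on $\Omega$. Continuity of $\phi_t$ up to $\overline\Omega$ gives
\[
\limsup_{w\to\xi}u(w)\le\phi_t(\xi)
\]
for every $\xi\in\partial\Omega$. Hence $u$ is a competitor in \eqref{eq:PB-def}, and so $u\le PB_{\phi_t}$. Taking the supremum over all such $u$ yields $v_t\le PB_{\phi_t}$.

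\emph{Step 3: $PB_{\phi_t}\le\phi_t$ on $\Omega$.} This is the key point, and it uses the plurisuperharmonicity of $\phi_t$ recorded above, namely $-\phi_t=\max\{\log|f|,-t\}\in\PSH$ near $\overline\Omega$. Let $u$ be any competitor in \eqref{eq:PB-def}. Then $u+(-\phi_t)=u-\phi_t$ is plurisubharmonic on $\Omega$. Because $\phi_t$ is continuous on $\overline\Omega$, we have
\[
\limsup_{w\to\xi}\bigl(u(w)-\phi_t(w)\bigr)\le0
\]
at every boundary point. The maximum principle for plurisubharmonic functions on the bounded domain $\Omega$ gives $u-\phi_t\le0$, that is $u\le\phi_t$. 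Taking the supremum over competitors, $PB_{\phi_t}\le\phi_t$. Since $PB_{\phi_t}$ is itself plurisubharmonic, it is an admissible minorant for $P_\Omega\phi_t$, so $PB_{\phi_t}\le v_t$. Together with Step 2 this gives $PB_{\phi_t}=v_t$, and \eqref{eq:canonical-trunc} supplies the last equality in \eqref{eq:PB=canonical}.

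\emph{Step 4: consequences.} For $0<s<t$ we have $\phi_s\le\phi_t$, so $PB_{\phi_s}\le PB_{\phi_t}$; this also follows from \eqref{eq:vleU}. The boundary values are $PB_{\phi_t}|_{\partial\Omega}=\phi_t|_{\partial\Omega}=\min\{-\log|f|,t\}$, which increase to $-\log|f|$. The limit equals $+\infty$ exactly on $A\cap\partial\Omega$, which is the extended-real sense of the convergence.

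No step is a real obstacle. The only point needing care is Step 3, which uses that the datum is plurisuperharmonic rather than arbitrary continuous data. Without this, the Perron--Bremermann solution need not lie below $\phi_t$ in the interior.
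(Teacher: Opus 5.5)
Your proposal is correct and follows the paper's proof essentially line by line. You get $v_t\le PB_{\phi_t}$ by treating plurisubharmonic minorants of $\phi_t$ as Perron--Bremermann competitors, the reverse inequality from the plurisubharmonicity of $-\phi_t=\max\{\log|f|,-t\}$ and the maximum principle, and the last equality from \eqref{eq:canonical-trunc}. The only cosmetic difference is where the maximum principle is applied: you apply it to $u-\phi_t$ for each competitor $u$, so that inequality does not need B-regularity, whereas the paper applies it directly to $PB_{\phi_t}-\phi_t$, using the continuous boundary values that B-regularity provides.
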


\begin{proof}
By B-regularity, $PB_{\phi_t}$ is the unique continuous maximal
plurisubharmonic function on $\Omega$ with boundary value $\phi_t$.
Since
\[
 -\phi_t=\max\{\log|f|,-t\}\in\PSH(\Omega),
\]
the function $PB_{\phi_t}-\phi_t$ is plurisubharmonic and has boundary
value zero.  Hence $PB_{\phi_t}\le\phi_t$, so
$PB_{\phi_t}\le v_t=P_\Omega\phi_t$.  Conversely, if
$u\in\PSH(\Omega)$ and $u\le\phi_t$, then its boundary upper limit is
at most $\phi_t|_{\partial\Omega}$.  Hence $u$ is one of the competitors
in the boundary envelope \eqref{eq:PB-def}, and therefore
$u\le PB_{\phi_t}$.  Taking the supremum over all such $u$ yields
$v_t\le PB_{\phi_t}$.  Thus $PB_{\phi_t}=v_t$, and
\eqref{eq:canonical-trunc} gives the last equality in
\eqref{eq:PB=canonical}.  The remaining assertions follow from the
monotonicity of $\phi_t$.
\end{proof}

On a B-regular domain, Proposition \ref{prop:PB-trunc} shows that the
canonical envelopes of $\widetilde G_A$ are simultaneously continuous
Perron--Bremermann solutions.  Hence the same sequence links the
disc-functional proof, continuity, the Hartogs-hull construction, and
the comparison principle used in Corollary \ref{cor:Dirichlet}.

\begin{remark}
\label{rem:PB-defect}
We record the possible regularization defect of the Perron--Bremermann limit.  Under the B-regularity assumption in Theorem~\ref{thm:main}, the
pointwise limit
\[
 \sup_{j\ge1}PB_{\phi_j}
\]
is lower semicontinuous because the functions $PB_{\phi_j}$ are
continuous and increasing.  Formula \eqref{eq:PB-limit} says that this
limit already equals $\widetilde G_A$ on $\Omega\setminus A$, while on
$A$ one may have to take the upper regularization:
\[
 \Bigl(\sup_{j\ge1}PB_{\phi_j}\Bigr)^*=\widetilde G_A.
\]
Thus any possible pointwise defect is confined to the pluripolar set
$A$.  At a point $p\in A\cap\partial\Omega$ one has
$\phi_j(p)=j$, so the boundary values tend to $+\infty$; this does not
assert that $\widetilde G_A(z)\to+\infty$ as $z\to p$.  At
$p\in\partial\Omega\setminus A$, the boundary data eventually stabilize
to $-\log|f(p)|$, and Corollary~\ref{cor:Dirichlet} gives the corresponding
boundary limit of $\widetilde G_A$.
\end{remark}

\subsection{A Dirichlet consequence}

A function $u\in\PSH(\Omega)$, $u\not\equiv-\infty$, is maximal if for every $G\Subset\Omega$ and every $v\in\PSH(G)$ satisfying
\[
 \limsup_{G\ni z\to\xi}v(z)\le u(\xi),\qquad \xi\in\partial G,
\]
one has $v\le u$ on $G$.  A set $F\subset\overline\Omega$ is called $\Omega$-pluripolar if there is $w\in\PSH(\Omega)$, $w<0$, $w\not\equiv-\infty$, such that the boundary upper regularization $w^*$ equals $-\infty$ on $F$.  In particular, $A\cap\partial\Omega$ is $\Omega$-pluripolar: by \eqref{eq:norm},
\[
 w:=\log|f|-1<0
\]
is plurisubharmonic, is not identically $-\infty$, and satisfies $w^*=-\infty$ at every point of $A\cap\partial\Omega$.

\begin{corollary}\label{cor:Dirichlet}
Under the hypotheses of Theorem \ref{thm:convexifiable}, the function $\widetilde G_A$ is locally bounded and maximal on $\Omega$, and
\begin{equation}\label{eq:MAzero}
 (\ddc\widetilde G_A)^n=0
\end{equation}
in the Bedford--Taylor sense.  Moreover,
\begin{equation}\label{eq:boundaryphi}
 \lim_{\Omega\ni z\to p}\widetilde G_A(z)=-\log|f(p)|,
 \qquad p\in\partial\Omega\setminus A.
\end{equation}
Among quasi upper bounded maximal plurisubharmonic functions on $\Omega$ which are bounded from below and have the boundary values \eqref{eq:boundaryphi} outside an $\Omega$-pluripolar set, $\widetilde G_A$ is unique.
\end{corollary}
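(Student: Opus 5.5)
We derive Corollary~\ref{cor:Dirichlet} from Theorem~\ref{thm:convexifiable}, which is Theorem~\ref{thm:main} together with Proposition~\ref{prop:PB-trunc}. Four properties of $\widetilde G_A$ are needed, and we treat them in order.

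\emph{Local boundedness, maximality and \eqref{eq:MAzero}.} By \eqref{eq:vleU}, $\widetilde G_A\ge v_1\ge0$. By \cite[Proposition~3.2]{LS99}, $\widetilde G_A$ is locally bounded above. Hence $\widetilde G_A$ is locally bounded.

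For maximality, Theorem~\ref{thm:main} gives $PB_{\phi_j}\nearrow\widetilde G_A$ off $A$ and $(\sup_j PB_{\phi_j})^*=\widetilde G_A$. Each $PB_{\phi_j}$ is maximal, continuous and bounded, so $(\ddc PB_{\phi_j})^n=0$. The sequence increases almost everywhere to $\widetilde G_A$, since $A$ is pluripolar. Bedford--Taylor monotone convergence for locally bounded increasing sequences then gives \eqref{eq:MAzero}. For locally bounded plurisubharmonic functions, vanishing Monge--Amp\`ere measure is equivalent to maximality.

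\emph{Boundary values \eqref{eq:boundaryphi}.} Fix $p\in\partial\Omega\setminus A$.

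The upper bound comes from $\widetilde G_A\le\phi=-\log|f|$. Since $|f(p)|>0$ and $f$ is continuous up to the boundary,
\[
 \limsup_{z\to p}\widetilde G_A(z)\le-\log|f(p)|.
\]

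For the lower bound, take $j>-\log|f(p)|$. Then $\phi_j=\phi$ near $p$, and B-regularity makes $PB_{\phi_j}$ continuous up to $\partial\Omega$ with value $\phi_j(p)=-\log|f(p)|$. Since $\widetilde G_A\ge PB_{\phi_j}$ everywhere (via \eqref{eq:PB=canonical} and \eqref{eq:vleU}), we get
\[
 \liminf_{z\to p}\widetilde G_A(z)\ge-\log|f(p)|.
\]

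\emph{Uniqueness.} Membership of $\widetilde G_A$ in the class is already established. It is bounded below by $0$. It is quasi upper bounded, with the bounded approximants $v_j$ increasing to it off the pluripolar set $A$. It has the prescribed boundary values off $A\cap\partial\Omega$, which is $\Omega$-pluripolar via $w=\log|f|-1$.

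Let $u$ be another function in the class. We prove $u\le\widetilde G_A$ and $\widetilde G_A\le u$ separately.

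For $u\le\widetilde G_A$, write $u$ as the pluripolar-a.e. increasing limit of bounded-above plurisubharmonic $u_k$. For each $\epsilon>0$ consider the competitor
\[
 u_k+\epsilon w .
\]
At points of $\partial\Omega\setminus A$ where $u$ has boundary value $\phi$, its boundary limsup is at most $\phi$. At the exceptional pluripolar boundary set we need $w^*=-\infty$ there. This is the delicate point: the exceptional set of $u$ is an arbitrary $\Omega$-pluripolar set, with its own function $w_u$. So we use $\epsilon(w+w_u)$ instead, which is $-\infty$ on both exceptional sets, and $u_k$ is bounded above. Then $u_k+\epsilon(w+w_u)$ is a competitor, and comparing against $PB_{\phi_j}$ for large $j$ gives $u_k+\epsilon(w+w_u)\le v_j\le\widetilde G_A$. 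Letting $k\to\infty$ and $\epsilon\to0$ off a pluripolar set gives $u\le\widetilde G_A$ everywhere.

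For $\widetilde G_A\le u$, use maximality of $u$ together with its lower bound. For fixed $j$ and $\epsilon>0$, the function
\[
 PB_{\phi_j}+\epsilon(w+w_u)
\]
is a bounded-above plurisubharmonic function. Its boundary limsup is at most $\liminf u$ at every boundary point outside the exceptional sets; at the remaining boundary points it is $-\infty$. Because $u$ is bounded below, a domination principle for maximal functions on the exhaustion $G\Subset\Omega$ then yields $PB_{\phi_j}+\epsilon(w+w_u)\le u$. Letting $\epsilon\to0$ and then $j\to\infty$, and using \eqref{eq:PB-limit}, gives $\widetilde G_A\le u$.

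\emph{Main obstacle.} The hardest step is this last comparison. Maximality is only defined with respect to relatively compact subdomains, so the boundary inequality must be transferred from $\partial\Omega$ inward. We would do this by showing that, for $\delta>0$, the set
\[
 \bigl\{PB_{\phi_j}+\epsilon(w+w_u)\ge u+\delta\bigr\}
\]
is relatively compact in $\Omega$. This relative compactness uses precisely the boundedness from below of $u$ and the lower semicontinuity of its boundary behaviour. We would then apply the maximality definition on a slightly larger subdomain $G$ containing this set.
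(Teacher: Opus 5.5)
Your proof is correct, but it takes a different route from the paper's in two places.

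\textbf{Maximality.} The paper does not use Theorem~\ref{thm:main} for this step. It takes maximality of $G_{A,\Omega}$ on $\Omega\setminus A$ from \cite[Theorem~3.3]{LS99}, adds the pluriharmonic function $-\log|f|$, and then removes the pluripolar set $A$ for the locally bounded function $\widetilde G_A$. You instead pass to the limit in $(\ddc PB_{\phi_j})^n=0$ by Bedford--Taylor monotone convergence. This is legitimate here precisely because Theorem~\ref{thm:main} gives $(\sup_jPB_{\phi_j})^*=\widetilde G_A$ with $0\le PB_{\phi_j}\le\widetilde G_A$. The cost is that your maximality argument depends on the main approximation theorem, whereas the paper's does not.

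\textbf{Uniqueness.} The paper combines \cite[Lemma~2.3]{DLH26} (finite unions of $\Omega$-pluripolar sets) with the abstract comparison principle \cite[Lemma~2.6(i)]{DLH26}, applied symmetrically to two members of the class. You instead compare each member $u$ directly with $\widetilde G_A$. The sum $w+w_u$ replaces Lemma~2.3, and two hand-made comparisons replace Lemma~2.6(i):
\begin{enumerate}
\item quasi upper boundedness of $u$, tested against $PB_{\phi_j}=v_j\le\widetilde G_A$;
\item maximality and the lower bound of $u$, tested against the bounded approximants $PB_{\phi_j}$ via your relative-compactness argument.
\end{enumerate}
The relative-compactness argument does work. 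At nonexceptional boundary points, $\limsup(\psi-u)\le\phi_j-\phi\le0$. At exceptional points, $\psi-u\to-\infty$, because $PB_{\phi_j}\le j$ and $u$ is bounded below. So $\overline{\{\psi\ge u+\delta\}}\subset\Omega$, and maximality on a larger $G\Subset\Omega$ applies.

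Your route is self-contained and proves slightly more. It shows that $\widetilde G_A$ dominates every quasi upper bounded plurisubharmonic function whose boundary $\limsup$ is at most $\phi$ off an $\Omega$-pluripolar set. It also shows that $\widetilde G_A$ is dominated by every bounded-below maximal function whose boundary $\liminf$ is at least $\phi$ off such a set. The price is that you use the specific envelope structure of $\widetilde G_A$ rather than a general comparison lemma.

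Two small points should be made explicit.
\begin{itemize}
\item You need $u_k\le u$ on all of $\Omega$. The inequality holds off a pluripolar, hence Lebesgue-null, set, and both sides are plurisubharmonic, so it holds everywhere.
\item ``Large $j$'' in your first comparison must mean $j\ge\sup_\Omega u_k$. Since $\phi$ is unbounded near $A\cap\partial\Omega$, it is this bound, not $\phi\le j$, that makes the boundary $\limsup$ at most $\phi_j$ on $\partial\Omega\setminus A$.
\end{itemize}
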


\begin{proof}
By Proposition \ref{prop:obstacle-disc}, $\widetilde G_A\ge0$, and it is locally bounded above near $A$ by \cite[Proposition 3.2]{LS99}.  On $\Omega\setminus A$, the function $G_{A,\Omega}$ is maximal by \cite[Theorem 3.3]{LS99}, while $\log|f|$ is pluriharmonic.  Hence $\widetilde G_A$ is maximal on $\Omega\setminus A$.  A locally bounded plurisubharmonic function which is maximal outside a pluripolar set is maximal everywhere; see \cite[Chapter 4]{Klimek}.  This gives \eqref{eq:MAzero}.

Let $p\in\partial\Omega\setminus A$.  Since $\widetilde G_A\le\phi$,
\[
 \limsup_{z\to p}\widetilde G_A(z)\le\phi(p).
\]
For every sufficiently large $j$, $\phi_j(p)=\phi(p)$, and B-regularity gives
\[
 \lim_{z\to p}v_j(z)=\phi_j(p)=\phi(p).
\]
Since $v_j\le\widetilde G_A$, \eqref{eq:boundaryphi} follows.

Theorem~\ref{thm:main} and Definition~\ref{def:qub} show that $\widetilde G_A$ is quasi upper bounded.  To verify the uniqueness statement, let $u$ and $v$ be two functions in the stated class.  Choose $\Omega$-pluripolar sets $E_u,E_v\subset\partial\Omega$ outside which $u$ and $v$, respectively, have the boundary values \eqref{eq:boundaryphi}, and set
\[
 E:=(A\cap\partial\Omega)\cup E_u\cup E_v.
\]
By \cite[Lemma 2.3]{DLH26}, the finite union $E$ is again $\Omega$-pluripolar.  Since both $u$ and $v$ are bounded from below, their boundary lower regularizations are everywhere greater than $-\infty$.  On $\partial\Omega\setminus E$ the two boundary limits agree, and hence $u^*\le v_*$ there.  Applying \cite[Lemma 2.6(i)]{DLH26} with $u$ as the quasi upper bounded function and $v$ as the maximal comparison function gives $u\le v$ on $\Omega$.  Interchanging the two functions gives the reverse inequality, and therefore $u=v$.
\end{proof}

\section{Sectorial contractions and the convex model}\label{sec:geometry}

The affine contraction toward a point outside the divisor has a small complex sector on which it remains uniformly inward.  This observation places convexifiable domains inside the framework of Definition~\ref{def:sectorial} and also explains the finite-order escape condition.

\begin{proposition}\label{thm:geometry}
Let $D\Subset\C^n$, $n\ge2$, be a bounded convex domain and let
$\Phi$ be biholomorphic from a neighborhood of $\overline D$ onto a
neighborhood of $\overline\Omega$, where $\Omega=\Phi(D)$.  For every reduced divisor $A=(f)|_\Omega$, the pair $(\Omega,A)$ admits an SID.  Moreover, $D$ is B-regular if and only if $\Omega$ is B-regular.
\end{proposition}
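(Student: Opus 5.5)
We construct the deformation on the convex model $D$ and transport it by $\Phi$. Since $\{f=0\}\cap\Omega\ne\varnothing$ is a proper analytic subset, while $\Omega$ is open, we can pick a point $a_0\in\Omega\setminus\{f=0\}$. Put $b:=\Phi^{-1}(a_0)\in D$. On $D$ we use the complexified affine contraction toward $b$,
\[
 T_\tau(\zeta):=b+e^{-\tau}(\zeta-b),
\]
and set $S_\tau:=\Phi\circ T_\tau\circ\Phi^{-1}$. This map is defined and holomorphic on $U\times\Delta_{t_0}$, where $U$ is a neighborhood of $\overline\Omega$ slightly smaller than the range of $\Phi$, provided $t_0$ is small. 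Clearly $S_0=\mathrm{id}$, and \eqref{eq:sector-displacement} follows from $|e^{-\tau}-1|\le 2|\tau|$, boundedness of $D$, and the Lipschitz bounds for $\Phi$ and $\Phi^{-1}$ on compacts. Biholomorphisms extending across closures distort distances to the boundary only by bounded factors: $\dist(\Phi(\zeta),\partial\Omega)\asymp\dist(\zeta,\partial D)$ near $\overline D$. Therefore it suffices to prove \eqref{eq:sector-selfmap} and \eqref{eq:sector-depth} for $T_\tau$ on $D$, and we may even take $V$ to be a neighborhood of all of $\partial\Omega$.

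The key step is the convex geometry estimate. Translate so that $b=0$, and let $\mu$ be the Minkowski gauge of $D$, a convex function. Since $B(0,r)\subset D\subset B(0,R)$, it satisfies $|\zeta|/R\le\mu(\zeta)\le|\zeta|/r$. Write $e^{-\tau}=\lambda e^{i\alpha}$ with $\lambda=e^{-\operatorname{Re}\tau}$ and $\alpha=-\operatorname{Im}\tau$, so that $|\alpha|\le\tan\theta\,\operatorname{Re}\tau$. Decompose
\[
 e^{-\tau}\zeta=\lambda\zeta+\lambda(e^{i\alpha}-1)\zeta .
\]
Subadditivity and homogeneity of $\mu$ then give
\[
 \mu(e^{-\tau}\zeta)\le\lambda\mu(\zeta)+\lambda|\alpha|\,|\zeta|/r\le\lambda\bigl(1+|\alpha|R/r\bigr)\mu(\zeta)
\]
for $\zeta\in\overline D$. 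Choosing $\theta$ small enough that $\tan\theta\cdot R/r\le 1/2$ and using $\lambda\le 1-\tfrac{3}{4}\operatorname{Re}\tau$ for small $t_0$, we get $\mu(e^{-\tau}\zeta)\le 1-c'\operatorname{Re}\tau\le 1-c''|\tau|$, because $\operatorname{Re}\tau\ge|\tau|\cos\theta$ on the sector. The inclusion $\{\mu\le1-\delta\}\subset D$ with distance at least $\delta r$ from $\partial D$ (by homogeneity, since $B(0,r)\subset D$) yields both $T_\tau(D)\subset D$ and $\dist(T_\tau(\zeta),\partial D)\ge c|\tau|$ for all $\zeta\in\overline D$. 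This estimate is where the sector restriction is genuinely needed: the rotation factor $e^{i\alpha}$ must be dominated by the radial contraction. I expect it to be the main point, though it is routine.

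For the finite-escape condition \eqref{eq:finite-escape}, let $p\in\operatorname{Sing}(A)\cap\partial\Omega$ and set $q=\Phi^{-1}(p)$. The one-variable function $\tau\mapsto f(S_\tau(p))=f\bigl(\Phi(b+e^{-\tau}(q-b))\bigr)$ is holomorphic for $\tau$ in a domain containing the real segment $[0,\log(2\operatorname{diam}D/\dist(b,\partial D))]$, because the image $b+s(q-b)$, $s\in[0,1]$, stays in $\overline D$. Along real $\tau\to+\infty$ the value tends to $f(\Phi(b))=f(a_0)\ne0$. So the function is not identically zero on this connected domain, and by the identity theorem its germ at $0$ is nonzero. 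One point needs care: the function must be defined on a connected set joining $0$ to large $\tau$. It is, since the segment lies in $\overline D$ and $\Phi$ extends to a neighborhood of $\overline D$ (for $\tau$ real and large, the point is close to $b$).

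Finally, B-regularity is preserved under $\Phi$. Given $\psi\in C(\partial\Omega)$, the function $\psi\circ\Phi\in C(\partial D)$. If $u$ solves the problem on $D$ in $\PSH(D)\cap C(\overline D)$, then $u\circ\Phi^{-1}$ solves it on $\Omega$, since plurisubharmonicity is invariant under biholomorphisms and $\Phi$ is a homeomorphism $\overline D\to\overline\Omega$ mapping boundary to boundary. The converse is symmetric.
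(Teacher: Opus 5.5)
Your proposal is correct and follows essentially the same route as the paper. The paper also uses an affine contraction toward a point of $\Phi^{-1}(\Omega\setminus A)$ on a sector thin enough ($\tan\theta\cdot R/r\le 1/2$, matching its $R\tan\theta<d/2$) that the rotational part is dominated by the real contraction, transports it by $\Phi$, obtains finite escape from the identity theorem along the segment to the center, and proves B-regularity invariance by composition; your $e^{-\tau}$ parametrization and Minkowski-gauge depth estimate are only cosmetic variants of its $(1-\tau)w+\tau w_*$ and interior-ball argument.
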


\begin{proof}
Choose $z_*\in\Omega\setminus A$, put $w_*:=\Phi^{-1}(z_*)$, and write
\[
 d:=\dist(w_*,\partial D)>0,
 \qquad
 R:=\sup_{w\in D}|w-w_*|.
\]
In convex coordinates set
\[
 \mathcal S_\tau(w):=(1-\tau)w+\tau w_*.
\]
For real $0<t<1$, convexity gives
\[
 B(\mathcal S_t(w),td)\subset D,
 \qquad w\in D.
\]
Write $\tau=t+is$.  Then
\[
 |\mathcal S_\tau(w)-\mathcal S_t(w)|\le |s|R.
\]
Choose a fixed angle $0<\theta<\pi/2$ so small that
$R\tan\theta<d/2$.  If $|\arg\tau|<\theta$ and
$t=\operatorname{Re}\tau>0$, then
$|s|R<td/2$.  Hence
\[
 \mathcal S_\tau(w)\in
 B(\mathcal S_t(w),td)\subset D.
\]
Moreover, using the same interior ball and the $1$-Lipschitz property of the
distance function,
\[
 \dist(\mathcal S_\tau(w),\partial D)
 \ge td-|s|R\ge \frac d2t\ge c_0|\tau|
\]
with a constant $c_0>0$ depending only on $d$, $R$, and $\theta$.  Thus
$\mathcal S_\tau(D)\subset D$ throughout this sector (after restricting to
$|\tau|<t_0$ only to remain inside the fixed neighborhood on which the
transport by $\Phi$ is defined).  Also
\[
 \sup_{w\in D}|\mathcal S_\tau(w)-w|\le R|\tau|.
\]
Choose a neighborhood $U_D$ of $\overline D$ compactly contained in the domain of $\Phi$ and so small that, after decreasing $t_0$, $\mathcal S_\tau(U_D)$ remains in the domain of $\Phi$ for every $|\tau|<t_0$.  Transporting the family by $\Phi$ gives a holomorphic map on a full parameter disc,
\[
 S_\tau:=\Phi\circ\mathcal S_\tau\circ\Phi^{-1}
 \quad\text{on }U:=\Phi(U_D),\qquad |\tau|<t_0.
\]
Because $\Phi$ and $\Phi^{-1}$ extend holomorphically across the closures, the preceding inclusion, depth, and displacement estimates persist on $\Omega$ after changing the constants.

It remains to check finite-order escape at a singular boundary point $p\in A\cap\partial\Omega$.  Put $x=\Phi^{-1}(p)$ and $F=f\circ\Phi$.  If
\[
 \tau\longmapsto F((1-\tau)x+\tau w_*)
\]
vanished identically near $0$, the identity theorem, applied on a connected complex neighborhood of the real segment $[0,1]$ on which the expression is defined, would give $F(w_*)=0$.  This contradicts $z_*\notin A$.  Hence Definition~\ref{def:sectorial} is satisfied.

Finally, B-regularity is invariant under a biholomorphism extending across the closures.  Indeed, continuous boundary data pull back under $\Phi$, a continuous plurisubharmonic extension on one domain composes with the inverse biholomorphism to give such an extension on the other, and the converse follows by symmetry.
\end{proof}

\begin{proof}[Proof of Theorem~\ref{thm:convexifiable}]
By Proposition~\ref{thm:geometry}, every reduced divisor on a convexifiable
domain gives a pair $(\Omega,A)$ admitting an SID.  The approximation
statements and, under B-regularity, the extended-real continuity of
$G_{A,\Omega}$ now follow directly from Theorem~\ref{thm:main}.
\end{proof}

The next example shows that convexifiability is strictly weaker than convexity.
\begin{example}
\label{ex:nonconvex-convexifiable}
In $\C^2$ let
\[
 \Phi(w_1,w_2):=(w_1,w_2+2w_1^2),\qquad
 \Omega:=\Phi(B^2).
\]
Since $\Phi$ is a polynomial automorphism with inverse
$\Psi(z_1,z_2)=(z_1,z_2-2z_1^2)$, the domain is explicitly
\[
 \Omega=\bigl\{(z_1,z_2)\in\C^2:
 |z_1|^2+|z_2-2z_1^2|^2<1\bigr\}.
\]
Thus $\Omega$ is convexifiable, and it is B-regular because the unit
ball is B-regular and B-regularity is preserved by the biholomorphism
in Proposition~\ref{thm:geometry}.  It is nevertheless not convex.  Indeed,
for $r=4/5$ the two points
\[
 (r,2r^2)=\Phi(r,0),\qquad (-r,2r^2)=\Phi(-r,0)
\]
belong to $\Omega$, whereas their midpoint $(0,2r^2)$ does not, since
$|2r^2|>1$.

For a divisor to which Theorem~\ref{thm:convexifiable} applies, one may take
\[
 f(z)=c\,z_1(z_2-2z_1^2),
\]
where $c>0$ is chosen so that $\|f\|_{\overline\Omega}\le1$.  This is a
reduced divisor with an interior node at the origin.
\end{example}

For a special homogeneous divisor, the older Hartogs-hull method still gives a conclusion beyond the B-regular setting. If the convex model $D$ below is B-regular, then Proposition~\ref{prop:boundary-hull} is already an immediate consequence of Theorem~\ref{thm:convexifiable} (equivalently, of Proposition~\ref{thm:geometry} and Theorem~\ref{thm:main}), and Theorem~\ref{thm:convexifiable} in fact allows an arbitrary reduced divisor. The point of the proposition is therefore the complementary case in which $D$ need not be B-regular.  Even in that case Proposition~\ref{thm:geometry} and the approximation part of Theorem~\ref{thm:main} still give the canonical convergence $v_j\nearrow\widetilde G_A$ off $A$, but they do not by themselves give continuity of $G_{A,\Omega}$.  For instance, bounded convex Reinhardt domains such as the polydisc have analytic discs in the boundary and hence are not B-regular, whereas the homogeneous Hartogs-hull argument below still yields continuity for the stated divisor.
\begin{proposition}
\label{prop:boundary-hull}
Let $D\Subset\C^n$ be a bounded convex domain, let $a\in D$, and let
$L_1,\ldots,L_m$ be nonzero, pairwise nonproportional complex linear
forms.  Suppose that $\Omega=\Phi(D)$, where $\Phi$ is biholomorphic
between neighborhoods of $\overline D$ and $\overline\Omega$.  Define
the reduced divisor
\[
 A_D:=\left(\prod_{\nu=1}^m L_\nu(w-a)\right)\Big|_D,
 \qquad A:=\Phi_*A_D.
\]
Then
\[
 G_{A,\Omega}\in C(\Omega,[-\infty,0]).
\]
The zero set extending $A$ across $\partial\Omega$ may be singular at
boundary points.
\end{proposition}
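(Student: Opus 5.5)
Since $G_{A,\Omega}$ is biholomorphically invariant, $G_{A,\Omega}=G_{A_D,D}\circ\Phi^{-1}$, I first reduce to the convex model: it suffices to prove $G_{A_D,D}\in C(D,[-\infty,0])$. After translating I take $a=0$ and write $f_D(w)=c\prod_\nu L_\nu(w)$, normalized so that $\|f_D\|_{\overline D}\le1$. Upper semicontinuity is automatic because $G_{A_D,D}$ is plurisubharmonic, and at points of $A_D$ the value is $-\infty$ while the $\limsup$ is also $-\infty$. So everything reduces to lower semicontinuity of $\widetilde G:=G_{A_D,D}-\log|f_D|$ on $D$. I would not use the SID machinery of Theorem~\ref{thm:main}, since without B-regularity the envelopes $v_j$ need not be continuous. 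Instead I would use the exact homogeneity
\[
 f_D(rw)=r^m f_D(w),\qquad (f_D\circ\delta_r)=(f_D)\ \text{as divisors},\qquad \delta_r(w):=rw,
\]
and convexity of $D$ about $0$, which gives $\delta_r(\overline D)\subset D$ for $0<r<1$.

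\textbf{Step 1 (dilated competitors).} For $0<r<1$ put $D_r:=r^{-1}D\Supset\overline D$. Because $\delta_r$ preserves the divisor, $G_{A_D,D_r}=G_{A_D,D}\circ\delta_r$. Restricting to $D$ shows this is a competitor in \eqref{eq:defG}, so
\[
 u_r:=G_{A_D,D}\circ\delta_r\le G_{A_D,D}\quad\text{on }D .
\]
The same inclusion argument shows that $u_r$ increases in $r$. Subtracting $\log|f_D|$ and using $\log|f_D\circ\delta_r|=m\log r+\log|f_D|$ gives
\[
 \widetilde G\circ\delta_r+m\log r\le\widetilde G .
\]

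\textbf{Step 2 (continuity of each competitor).} I aim to show that each $u_r$ is continuous on $D$ off $A_D$, or at least that $\widetilde G\circ\delta_r$ is continuous on $\overline D$. Here I would bring in the Hartogs-hull description of Section~\ref{sec:hartogs}. One encodes $|f_D|$ on $\partial D_r$ by the compact Hartogs set
\[
 K_r=\{(w,\lambda):w\in\partial D_r,\ |\lambda|\le|f_D(w)|^{-1}\}.
\]
The lower semicontinuous fiber function read off from its holomorphic hull then gives a lower bound for $\widetilde G\circ\delta_r$ that is lower semicontinuous on a neighborhood of $\overline D$. Because $\overline D$ is compactly contained in $D_r$, the boundary part of the divisor never enters; this is exactly where the earlier argument of \cite{Dieu05} used homogeneity. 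The alternative would be a direct Poletsky-disc estimate via \eqref{eq:discU}: a near-extremal disc $g$ for $\widetilde G(rw_0)$ yields, for $w$ near $w_0$, the translated disc $g+r(w-w_0)$. This disc remains in $D$ by the depth estimate $\operatorname{dist}(\delta_r\overline D,\partial D)>0$, and its functional changes by $o(1)$ away from the finitely many zeros of $f_D\circ g$. Either route gives lower semicontinuity of $u_r$ on $D$.

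\textbf{Step 3 (passage to the limit).} I expect this to be the main obstacle. Since $u_r\uparrow$, the function $u:=\lim_{r\to1}u_r\le G_{A_D,D}$ is lower semicontinuous as an increasing limit. Moreover $u^*$ is plurisubharmonic, nonpositive, and has the required Lelong numbers, so $u^*=G_{A_D,D}$. The difficulty is to upgrade this to $u=G_{A_D,D}$ pointwise off $A_D$, since the exceptional set of an increasing limit is only known to be pluripolar. I would try the disc formula once more. Given $z\notin A_D$ and a near-optimal disc $g\in\cA(D,z)$ for \eqref{eq:discU}, the disc $r^{-1}g$ lies in $D_r$. After a small adjustment at the centre, it shows
\[
 \widetilde G(z)\le \widetilde G(rz)+m\log(1/r)+o(1)\quad\text{as } r\to1 .
\]
This requires controlling $g(\overline\D)$ near $\partial D$, which is the delicate point, and is where the convexity of $D$ (star-shapedness about $0$ with uniform depth) has to be used quantitatively. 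Combining this with Step~1 gives $\widetilde G\circ\delta_r\to\widetilde G$ pointwise. Then $\widetilde G$ is a supremum of lower semicontinuous functions, hence lower semicontinuous, and continuity of $G_{A_D,D}$, and thus of $G_{A,\Omega}$, follows.
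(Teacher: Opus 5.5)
\textbf{There is a genuine gap at Step~2, and the argument is circular there rather than merely incomplete.} Your Step~1 and the final transport by $\Phi$ are correct.

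Since $u_r=G_{A_D,D}\circ\delta_r$, lower semicontinuity of $u_r$ on $D$ is exactly lower semicontinuity of $G_{A_D,D}$ on $rD$. Because $rD\uparrow D$, the claim that each $u_r$ is continuous for all $r<1$ is the proposition itself. The inclusion $\overline D\Subset D_r$ does not help. The divisor of $D_r$ still meets $\partial D_r$, since a hyperplane through $0$ always does. Interior continuity of such Green functions is a global property governed by the boundary; compare the discontinuous example from \cite{Dieu05} recalled in the introduction.

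Neither of your two routes supplies the missing lower semicontinuity.
\begin{itemize}
\item In (a), $K_r$ with fiber radius $|f_D|^{-1}$ is not compact, because that radius is infinite over $A_D\cap\partial D_r\ne\varnothing$. With the correct radius $|f_D|$, compactness of the hull does make the fiber function lower semicontinuous. However, it is a priori only a \emph{minorant} of $\widetilde G$, and a lower semicontinuous minorant says nothing about lower semicontinuity of $\widetilde G$. Identifying the fiber function with $\widetilde G$ off the divisor is the whole content of the hull method, and you do not provide it.
\item In (b), the translated disc need not lie in $D$. The depth of $\delta_r\overline D$ controls only the centre, while near-extremal discs reach toward $\partial D$. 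Even if the disc stayed in $D$, moving one fixed disc from $rw_0$ to nearby centres gives $\limsup_{w\to w_0}\widetilde G(rw)\le\widetilde G(rw_0)$, which is the automatic upper semicontinuity. Lower semicontinuity would require moving near-extremal discs from the varying centres $rw$ back to $rw_0$, with errors uniform over that whole family. No such uniformity is available, because $-\log|f_D|$ is unbounded near $A_D$.
\end{itemize}

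Conversely, Step~3, which you flag as the main obstacle, is essentially free. By Step~1, $\lambda\mapsto G_{A_D,D}(\lambda z)$ is subharmonic near $\lambda=1$ and increasing on the real segment $[1-\eta,1]$. A segment is non-thin at its endpoint, so the $\limsup$ along it equals the value at $1$, and monotonicity gives $G_{A_D,D}(rz)\uparrow G_{A_D,D}(z)$. Your dilated disc $r^{-1}g$ lies in $D_r$, not in $D$, so after rescaling it only reproduces a tautology. The whole argument therefore rests on Step~2.

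The paper does not reprove this case. After translating $a$ to $0$, it notes that $D-a$ is bounded, pseudoconvex and strongly star-shaped, with $t\overline{(D-a)}\subset D-a$ for $0\le t<1$, and that the divisor is a union of hyperplanes through the star centre. It then invokes \cite[Corollary~2.9]{Dieu05} and transports by $\Phi$ exactly as you do.

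The missing idea for a self-contained proof is the one behind that citation:
\begin{itemize}
\item take the compact Hartogs set $\{(p,w):p\in\partial D,\ |w|\le|f_D(p)|\}$;
\item obtain lower semicontinuity of $-\log$ of its hull's fiber radius from compactness of the hull;
\item use homogeneity of $f_D$ together with the radial inward condition to show that this function coincides with $\widetilde G$ off $A_D$.
\end{itemize}
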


\begin{proof}
Translate $a$ to the origin.  The bounded convex domain $D-a$ is
pseudoconvex and strongly star-shaped about $0$: if $0\le t<1$,
convexity and $a\in D$ give the stronger inclusion
\[
 t\,\overline{(D-a)}\subset D-a.
\]
The divisor becomes a finite union of complex hyperplanes through $0$.
Therefore \cite[Corollary 2.9 and the remark following it]{Dieu05}
gives continuity of its Green function on $D-a$; this is precisely the
case proved there by the Hartogs-hull method.

Finally, composition with $\Phi$ gives a bijection between the negative
plurisubharmonic competitors in \eqref{eq:defG} on $D$ and on $\Omega$.
Lelong numbers and divisor multiplicities are preserved by a
biholomorphism.  Consequently
\[
 G_{A,\Omega}\circ\Phi=G_{A_D,D},
\]
and the claimed extended-real continuity follows.
\end{proof}

The following example has a genuine singular locus whose closure meets the boundary.
\begin{example}
\label{ex:boundary-singularity}
Take $D=B^3\subset\C^3$, $\Phi=\mathrm{id}$, and
\[
 f(z)=z_1z_2.
\]
The divisor is reduced and its singular locus is
\[
 \{z_1=z_2=0\}\cap B^3.
\]
Its closure contains $p=(0,0,1)\in\partial B^3$, where $f(p)=0$ and
$df(p)=0$.  Nevertheless $G_{(f),B^3}$ is continuous in the
extended-real sense by Proposition \ref{prop:boundary-hull}.
\end{example}

\begin{remark}
Theorem~\ref{thm:convexifiable} does not depend on whether the chosen global
defining function has a critical zero on the boundary.  For example, on
$B^2$ the function $f=(1-z_1)z_2$ defines the same divisor in the ball as
$z_2$, although $df(1,0)=0$.  More importantly,
Example~\ref{ex:boundary-singularity} has a genuine intersection of
components whose singular locus reaches the boundary; it is covered
directly by Theorem~\ref{thm:convexifiable}, while
Proposition~\ref{prop:boundary-hull} records the Hartogs-hull
explanation from \cite{Dieu05} for that special homogeneous geometry.
\end{remark}

The SID condition also occurs naturally on Reinhardt domains. We use the
characterization of Dieu--Dung--Hung \cite[Proposition~3.1]{DDH05}: a bounded
Reinhardt domain is B-regular if and only if it is hyperconvex and its
boundary contains no nonconstant analytic disc.

\begin{proposition}\label{prop:Reinhardt-SID}
Let $\Omega\Subset\C^n$ be a bounded Reinhardt domain. Assume that
$\Omega=\{\rho<0\}$ near $\partial\Omega$, where $\rho$ is a $C^1$
defining function satisfying
\[
 d\rho_z(-z)\le -c_0<0,\qquad z\in\partial\Omega.
\]
Assume also that $\Omega$ is hyperconvex and that $\partial\Omega$ contains
no nonconstant analytic disc. Let $f$ be holomorphic and define a reduced divisor on a neighborhood of $\overline\Omega$, put $A=(f)|_\Omega$, and assume $A\ne\varnothing$, and suppose that
\[
 \tau\longmapsto f(e^{-\tau}p)
 \quad\hbox{is not identically zero near }0
\]
for every $p\in\operatorname{Sing}(A)\cap\partial\Omega$. Then
$(\Omega,A)$ admits an SID via
\[
 S_\tau(z):=e^{-\tau}z,
\]
and
\[
 G_{A,\Omega}\in C(\Omega,[-\infty,0]).
\]
\end{proposition}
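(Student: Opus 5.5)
The plan is to verify the four conditions of Definition~\ref{def:sectorial} for $S_\tau(z)=e^{-\tau}z$, using the Reinhardt symmetry and the transversality hypothesis $d\rho_z(-z)\le-c_0$. We then obtain B-regularity from \cite[Proposition~3.1]{DDH05} and conclude with Theorem~\ref{thm:main}.

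\textbf{Displacement and finite escape.} The map $S$ is entire in $(z,\tau)$ and $S_0=\mathrm{id}$. Since $|e^{-\tau}-1|\le 2|\tau|$ for small $|\tau|$, we get \eqref{eq:sector-displacement} with $C=2\sup_{\overline\Omega}|z|$. Condition \eqref{eq:finite-escape} is exactly the stated hypothesis on singular boundary points.

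\textbf{Self-map and depth.} Write $\tau=t+is$ with $|s|<t\tan\theta$. Then
\[
 S_\tau(z)=e^{-is}\,(e^{-t}z).
\]
The Reinhardt property reduces matters to the real contraction $z\mapsto e^{-t}z$, because the diagonal rotation $z\mapsto e^{-is}z$ preserves $\Omega$ and $\dist(\cdot,\partial\Omega)$. So it suffices to prove
\[
 \dist(e^{-t}z,\partial\Omega)\ge c\,t \quad\text{for } z\in\overline\Omega,\ 0<t<t_0 .
\]
This then gives both \eqref{eq:sector-selfmap} and \eqref{eq:sector-depth} on all of $\overline\Omega$, so we may take $V$ to be any neighborhood, and $|\tau|\le t/\cos\theta$ converts $ct$ into a multiple of $|\tau|$.

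For this estimate, note that $\frac{d}{dt}\rho(e^{-t}z)=d\rho_{e^{-t}z}(-e^{-t}z)$. By continuity of $d\rho$ and compactness of $\partial\Omega$, this derivative is $\le -c_0/2$ on a collar $\{|\rho|<\delta\}$. Since $\rho$ is $C^1$, $|\rho|$ is comparable to the distance to the boundary in the collar (using $d\rho\ne0$ there). Hence for $z$ in the collar with $\rho(z)\le0$:
\begin{itemize}
\item the function $\rho(e^{-t}z)$ decreases at rate at least $c_0/2$ as long as it stays in the collar, giving $\rho(e^{-t}z)\le -c_0t/2$ and distance $\gtrsim t$;
\item once it leaves the collar on the interior side, its distance is already $\gtrsim\delta\ge t$ for small $t$.
\end{itemize}
For $z$ in the compact set $\overline\Omega\setminus\text{collar}$, the distance is bounded below by a constant, and the displacement bound keeps it positive for small $t$. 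One must also check that a point leaving the collar inward cannot re-enter it within time $t_0$; the displacement bound $|e^{-t}z-z|\le Ct$ with $t_0$ small handles this.

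The main care needed here is that the hypothesis is stated only on $\partial\Omega$ while $\rho$ is merely $C^1$. The step above uses uniform continuity of $d\rho$ on a compact neighborhood of $\partial\Omega$ to spread the inequality to the collar, and the implicit-function comparability of $|\rho|$ with the boundary distance.

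\textbf{Conclusion.} By \cite[Proposition~3.1]{DDH05}, hyperconvexity together with the absence of analytic discs in $\partial\Omega$ makes $\Omega$ B-regular. Together with the SID just verified, and after normalizing $\|f\|_{\overline\Omega}\le1$ by a constant multiple (which changes neither the divisor nor the escape condition), Theorem~\ref{thm:main} yields $G_{A,\Omega}\in C(\Omega,[-\infty,0])$.
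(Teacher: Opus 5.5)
Your proposal is correct and follows essentially the same route as the paper. Both get B-regularity from \cite[Proposition~3.1]{DDH05}. Both control the real contraction $e^{-t}z$ by the decrease of $\rho$ on a boundary collar together with the Lipschitz bound $|\rho|\le L\,\dist(\cdot,\partial\Omega)$, use the rotation invariance $S_\tau=e^{-is}S_t$ to pass to the sector, and finish with Theorem~\ref{thm:main}.
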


\begin{proof}
By \cite[Proposition~3.1]{DDH05}, the Reinhardt assumptions imply that
$\Omega$ is B-regular. By continuity of $d\rho_z(-z)$, after shrinking to a
fixed boundary collar one has
\[
 \frac{d}{dt}\rho(e^{-t}z)\le -\frac{c_0}{2}
\]
for all sufficiently small $t>0$ as long as the orbit remains in that collar.
Hence the real contractions $S_t(z)=e^{-t}z$ map $\Omega$ into itself for
all sufficiently small $t>0$.  If $L$ bounds $|\nabla\rho|$ on the chosen
collar, the defining-function decrease and
$|\rho(q)|\le L\,\dist(q,\partial\Omega)$ show that points coming from the
collar have boundary depth at least $c_1t$ for some $c_1>0$; points outside
the collar remain a fixed positive distance from the boundary for small $t$.
Because $\Omega$ is Reinhardt, multiplication by $e^{-is}$ preserves
$\Omega$ and its Euclidean boundary distance. Thus for
$\tau=t+is$ in a sufficiently small fixed sector,
$S_\tau=e^{-is}S_t$ satisfies the self-map and depth estimates of
Definition~\ref{def:sectorial}. The displacement estimate follows from
$|e^{-\tau}-1|=O(|\tau|)$ on the bounded set $\overline\Omega$. The final
assumption is exactly the finite-order escape condition. The conclusion now
follows from Theorem~\ref{thm:main}.
\end{proof}

A basic family is provided by the Reinhardt ellipsoids
\[
 \Omega_{\mathbf a}:=\left\{z\in\C^n:
 \sum_{j=1}^n |z_j|^{a_j}<1\right\},\qquad a_j>1.
\]
Their B-regularity is also recorded explicitly in \cite[Corollary~3.4]{DDH05}.
For the defining function
$\rho_{\mathbf a}=\sum_j|z_j|^{a_j}-1$ one has on the boundary
\[
 d\rho_{\mathbf a,z}(-z)
 =-\sum_{j=1}^n a_j|z_j|^{a_j}
 \le-\min_j a_j.
\]
Hence scalar contraction gives an SID for every reduced divisor satisfying the
finite-order escape condition.  Smooth divisors require no additional escape
hypothesis, while singular examples can be obtained, for $n\ge2$, from
$(z_1-1)^2-\mu z_2^2$ with $\mu>0$.

The preceding criterion also gives explicit nonconvex Reinhardt examples.
\begin{example}\label{ex:Reinhardt-SID}
For $\lambda>0$ put
\[
 \Omega_\lambda:=\bigl\{(z_1,z_2)\in\C^2:
 |z_1|^2+|z_2|^2+\lambda|z_1z_2|^2<1\bigr\}.
\]
With
\[
 \rho_\lambda(z):=|z_1|^2+|z_2|^2+\lambda|z_1z_2|^2-1,
\]
the Levi matrix is
\[
 \begin{pmatrix}
 1+\lambda|z_2|^2 & \lambda\overline z_1z_2\\
 \lambda z_1\overline z_2 & 1+\lambda|z_1|^2
 \end{pmatrix},
 \qquad
 \det=1+\lambda(|z_1|^2+|z_2|^2)>0.
\]
Thus $\rho_\lambda$ is strictly plurisubharmonic. In particular
$\Omega_\lambda$ is hyperconvex and its boundary contains no nonconstant
analytic disc, so it is B-regular also by \cite[Proposition~3.1]{DDH05}.
On $\partial\Omega_\lambda$,
\[
 d\rho_\lambda(-z)
 =-2(|z_1|^2+|z_2|^2)-4\lambda|z_1z_2|^2
 =-2\bigl(1+\lambda|z_1z_2|^2\bigr)\le-2.
\]
Hence Proposition~\ref{prop:Reinhardt-SID} applies.

The domain need not be convex. Indeed, if $\lambda>3$, choose $r>0$ by
$2r^2+\lambda r^4=1$ and put $q:=\lambda r^2$.  Since the boundary equation gives
$\lambda=q^2+2q$, the inequality $\lambda>3$ implies $q>1$.  For small
$\varepsilon>0$,
\[
 \rho_\lambda(r+\varepsilon,r-\varepsilon)
 =2(1-q)\varepsilon^2+\lambda\varepsilon^4<0,
\]
and the same holds with the two coordinates interchanged, whereas their
midpoint $(r,r)$ lies on the boundary.

Finally, for $\mu>0$ define
\[
 f_\mu(z_1,z_2):=(z_1-1)^2-\mu z_2^2,
 \qquad A_\mu:=(f_\mu)|_{\Omega_\lambda}.
\]
The divisor is reduced and has the unique singular point $p=(1,0)$, which
lies on $\partial\Omega_\lambda$. Its zero set meets $\Omega_\lambda$:
for small $t>0$ the point $(1-\sqrt\mu\,t,t)$ lies on one component of
$\{f_\mu=0\}$ and satisfies $\rho_\lambda<0$. Moreover,
\[
 f_\mu(e^{-\tau},0)=(e^{-\tau}-1)^2,
\]
so the finite-order escape condition holds with order two. Therefore
$G_{A_\mu,\Omega_\lambda}$ is continuous in the extended-real sense.
\end{example}

\section{Root selection and analytic-disc gluing}\label{sec:insertion}

This section contains the source-side part of the proof.  The target-side
insertions adapted to SID are postponed to Section~\ref{sec:sectorial-proof}.
The only input taken from the source construction of
Drinovec Drnov\v{s}ek--Forstneri\v{c} is the root-cell mechanism in the proof
of \cite[Lemma~3.2 and Theorem~3.1]{DDF12}.  We isolate below exactly the
selection statement that is needed.  The subsequent gluing is proved directly
in the Euclidean target $\C^n$, so its smallness threshold is independent of
the later large root-distribution integer.

For a measurable arc $J\subset\T$ we write $|J|:=\sigma(J)$ for its
normalized arc length.  The following selection form is extracted from the
root-cell construction in the proof of \cite[Lemma~3.2]{DDF12}.  It is stated
only in the form used later.

\begin{lemma}\label{lem:root-selection}
Let $\Delta\subset\D$ be a simply connected smoothly bounded source domain
whose boundary contains an open arc $J'\subset\T$, and assume
$0\notin\overline\Delta$.  Let $J\Subset J'$ be a closed arc of positive length, let
$\chi$ be holomorphic and nowhere zero on $\Delta$, continuous up to a
neighborhood of $J'$ in $\overline\Delta$, and assume $|\chi|=1$ on $J'$.
Fix $\beta\in\D^*$.  Given $\eta>0$, after choosing an arbitrarily thin
one-sided collar of $J$ in $\Delta$, for every sufficiently large integer
$N$ one can select solutions
\[
 z_{N,1},\ldots,z_{N,\ell_N}
\]
of $z^N\chi(z)=\beta$ in that collar such that
\begin{equation}\label{eq:root-selection-count}
 \ell_N\le 2N|J|
\end{equation}
and
\begin{equation}\label{eq:root-selection-weight}
 \sum_{\ell=1}^{\ell_N}\log|z_{N,\ell}|
 \le |J|\log|\beta|+\eta.
\end{equation}
The selected solutions may be taken in pairwise disjoint root cells of the
construction in the proof of \cite[Lemma~3.2]{DDF12}.
\end{lemma}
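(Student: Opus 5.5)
We would prove Lemma~\ref{lem:root-selection} by a direct Rouch\'e argument on small root cells near $J$, following the mechanism of \cite[Lemma~3.2]{DDF12} but tracking only the two quantities \eqref{eq:root-selection-count} and \eqref{eq:root-selection-weight}.

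\emph{Step 1: a local logarithm and model roots.} Near $J'$ the function $\chi$ is nowhere zero with $|\chi|=1$ on $J'$. On a thin one-sided collar $W$ of $J$ we therefore write $\chi=e^{i\psi}$, where $\psi$ is holomorphic, continuous up to $J'$, and real on $J'$. Fix a branch $\log\beta=\log|\beta|+i\arg\beta$ with $\log|\beta|<0$. In the logarithmic coordinate $z=e^{i\zeta}$, $\zeta=x+iy$ with $y>0$ small, the equation $z^N\chi(z)=\beta$ becomes
\[
 N i\zeta+i\psi(e^{i\zeta})=\log\beta+2\pi i k,\qquad k\in\mathbb Z .
\]
Since $\psi$ is bounded on $W$, for large $N$ the $k$-th equation has a model solution
\[
 \zeta_k^0=\frac{2\pi k+\arg\beta-\log|\beta|\, i^{-1}\cdots}{N},
\]
more precisely a point with $\operatorname{Re}\zeta_k^0\approx (2\pi k+\arg\beta)/N$ and $\operatorname{Im}\zeta_k^0\approx-\log|\beta|/N$. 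The error is $O(1/N^2)$, because $\psi$ is real on $J'$ and hence $\operatorname{Im}\psi=O(y)=O(1/N)$. Consequently $|z_k|\approx|\beta|^{1/N}$, and every such point lies in the collar once $N$ is large.

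\emph{Step 2: Rouch\'e in root cells.} Take the disc $D_k$ of radius $c/N$ about $\zeta_k^0$, with $c$ small so that the discs are pairwise disjoint and contained in $W$. On $\partial D_k$ the term $N i\zeta$ changes by $Nc/N=c$. The perturbation $\psi(e^{i\zeta})-\psi(e^{i\zeta_k^0})$ is $O(c/N)$ by the Lipschitz bound on $\psi$ near $J'$, and this is smaller than $c$ for large $N$. Rouch\'e's theorem then gives exactly one solution $z_{N,k}$ in each cell. These are the pairwise disjoint root cells of the statement.

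\emph{Step 3: counting and weight.} Select the indices $k$ whose model real parts lie in the arc $J$, which in normalized length corresponds to $2\pi|J|$. Then
\[
 \ell_N=N|J|+O(1)\le 2N|J|
\]
for large $N$, which is \eqref{eq:root-selection-count}. For each selected root, $\log|z_{N,k}|=-\operatorname{Im}\zeta_k$. Solving the $k$-th equation and using $\operatorname{Im}\psi=O(1/N)$ on the cell gives
\[
 \log|z_{N,k}|=\frac{\log|\beta|+O(1/N)}{N}.
\]
Summing over the selected $k$ yields
\[
 \sum_{\ell=1}^{\ell_N}\log|z_{N,\ell}|=\frac{\ell_N}{N}\log|\beta|+O(1/N).
\]
Since $\log|\beta|<0$, we need $\ell_N/N\ge|J|-o(1)$. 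If the count falls slightly short, we slightly enlarge $J$ inside $J'$, which is allowed because $J\Subset J'$. Then for large $N$ the right-hand side is at most $|J|\log|\beta|+\eta$, which is \eqref{eq:root-selection-weight}.

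\emph{Main obstacle.} The delicate point is the uniform control of $\operatorname{Im}\psi$ in Step 3. The errors must be $o(1)$ \emph{after} summing $\sim N$ terms, so each per-root error has to be $o(1/N)$ in $\log|z|$. This holds because $\psi$ is real on $J'$ and Lipschitz up to $J'$, so that $\operatorname{Im}\psi=O(1/N)$ at depth $1/N$. If $\chi$ is only continuous up to $J'$, we would first shrink to a subarc where $\chi$ extends smoothly. Alternatively, we would replace $\chi$ by a uniform approximant and absorb the discrepancy via Rouch\'e, using a modulus of continuity $\omega$ with $N\cdot\omega(1/N)/N\to0$.
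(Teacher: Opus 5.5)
Your strategy is the paper's: Rouch\'e's theorem on pairwise disjoint root cells hugging $J$, roughly $N|J|$ cells selected over $J$, and the identity $\log|z|=\frac1N\bigl(\log|\beta|-\log|\chi(z)|\bigr)$ at each selected root, summed. As written, however, Step~2 fails, because the model roots of Step~1 ignore $\arg\chi$. The $k$-th equation reads $N\zeta=2\pi k+\arg\beta-\psi(e^{i\zeta})-i\log|\beta|$. Near $J'$ only $\operatorname{Im}\psi=-\log|\chi|$ is small, while $\operatorname{Re}\psi=\arg\chi$ is of size $O(1)$. Hence the genuine roots are displaced horizontally from your lattice $\{\zeta_k^0\}$ by $-\operatorname{Re}\psi/N$. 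This shift is comparable to the spacing $2\pi/N$, not $O(1/N^2)$ as you claim. Wherever $\operatorname{dist}(\arg\chi,2\pi\mathbb Z)$ exceeds $c$ along $J$, the discs $D_k$ of radius $c/N$ contain no roots at all. Your bound $\psi(e^{i\zeta})-\psi(e^{i\zeta_k^0})=O(c/N)$ controls only the variation of the perturbation over the cell. The Rouch\'e inequality on $\partial D_k$ also needs its size at the center to be small, and that fails.

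The repair is to freeze $\chi$ in the model: center $D_k$ at $\bigl(2\pi k+\arg\beta-\psi(e^{ix_k})-i\log|\beta|\bigr)/N$ with $e^{ix_k}\in J'$ nearby. On $\partial D_k$ the perturbation is then $|\psi(e^{i\zeta})-\psi(e^{ix_k})|=o(1)<c$ by uniform continuity alone, and the cells stay disjoint. This is what the paper does. It splits $J$ into finitely many subarcs $J_\nu$ and compares $z^N$ with the frozen value $a_\nu=\beta/\chi(\xi_\nu)$ on the preimages of $B(a_\nu,\epsilon_0|a_\nu|)$. Your per-root freezing is a harmless variant that avoids the subdivision.

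Your ``main obstacle'' is misdiagnosed: the weight estimate needs no Lipschitz rate. Make the collar so thin that $|\log|\chi||<\delta<|\log|\beta||$ on it. Then every selected root satisfies $\log|z|\le(\log|\beta|+\delta)/N<0$. Since every term is negative, only a lower bound $\ell_N\ge N|J|-O(1)$ matters, and it gives $\sum\log|z|\le|J|\log|\beta|+|J|\delta+O(1/N)$. This is the paper's estimate, with the total length $L$ of the $J_\nu$ in place of $|J|$ and the gap loss $(|J|-L)|\log|\beta||$ absorbed into $\eta$. Your closing remark that $\omega(1/N)\to0$ suffices is the same observation.

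Shrinking to a subarc where $\chi$ extends smoothly is unjustified as you state it, since mere continuity gives no such subarc in general. It happens to be true here: $\log|\chi|$ vanishes on $J'$, so Schwarz reflection makes $\chi$ holomorphic across $J'$, and your Lipschitz bound is in fact available. Finally, enlarging $J$ is unnecessary, since your count is already $N|J|+O(1)$, and it would place roots outside the prescribed collar of $J$. If anything, select cells over a slightly smaller arc so that the roots near the endpoints stay inside the collar.
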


\begin{proof}
Put $\zeta:=\beta/\chi$.  Thus $\zeta$ is holomorphic and nowhere zero on
$\Delta$, extends continuously to a neighborhood of $J'$ in
$\overline\Delta$, and $|\zeta|=|\beta|<1$ on $J'$.  Fix first the prescribed
one-sided collar $\mathcal C$ of $J$ in $\overline\Delta$.  Choose finitely
many pairwise disjoint closed subarcs
$J_1,\ldots,J_q\Subset J$, separated by short gaps, whose total length
$L$ satisfies
\[
 (|J|-L)|\log|\beta||<\eta/4.
\]
Choose first
\[
 0<\delta<\min\{-\tfrac12\log|\beta|,\eta/(4L)\},
\]
and choose $\epsilon_0>0$ so small that the disc of radius
$\epsilon_0|\beta|$ centered at any point of modulus $|\beta|$ is compactly
contained in $\D^*$.  After making the subarcs shorter if necessary, choose
points $\xi_\nu\in J_\nu$ and pairwise disjoint open neighborhoods
$U_\nu\subset\C$ of $J_\nu$ so small that
$U_\nu\cap\Delta\subset\mathcal C$ and
$\overline{U_\nu}\cap\T\Subset J$,
\begin{equation}\label{eq:zeta-nearly-constant}
 |\zeta(z)-a_\nu|<\epsilon_0|a_\nu|,
 \qquad z\in U_\nu\cap\Delta,
 \qquad a_\nu:=\zeta(\xi_\nu),
\end{equation}
and
\[
 |\log|\chi(z)||<\delta,
 \qquad z\in U_\nu\cap\Delta.
\]
This is exactly the local subdivision used in the proof of
\cite[Lemma~3.2]{DDF12}.

For each $\nu$ let $B_\nu$ be the disc centered at $a_\nu$ with radius
$\epsilon_0|a_\nu|$.  The inverse image of $B_\nu$ under $z\mapsto z^N$ is
the disjoint union of $N$ simply connected root cells.  As $N\to\infty$
these cells approach $\T$ and become equidistributed in the angular
variable.  Hence, for all sufficiently large $N$, at least
$\lceil N|J_\nu|\rceil$ of them are compactly contained in
$U_\nu\cap\Delta$.  On the boundary of any such cell the functions
$z^N-a_\nu$ and $\zeta(z)-a_\nu$ satisfy the strict Rouch\'e inequality by
\eqref{eq:zeta-nearly-constant}.  Therefore every selected cell contains
exactly one solution of $z^N=\zeta(z)$; this is precisely the root-cell
argument in the proof of \cite[Lemma~3.2]{DDF12}.

Every selected root satisfies
\[
 \log|z|=\frac1N\bigl(\log|\beta|-\log|\chi(z)|\bigr)
 \le \frac{\log|\beta|+\delta}{N}.
\]
Select exactly $\lceil N|J_\nu|\rceil$ root cells over each $J_\nu$.
The total number of selected roots is at most $NL+q$, hence at most
$2N|J|$ for all sufficiently large $N$.  By the choice of $\delta$,
$\log|\beta|+\delta<0$.  Since the number of selected roots is at
least $NL$, the preceding pointwise upper bound, whose right-hand side is
negative, gives
\[
 \sum\log|z|
 \le L\bigl(\log|\beta|+\delta\bigr)
 \le |J|\log|\beta|+\frac{\eta}{2}.
\]
Here the last inequality uses the estimate for the omitted gaps and
$L\delta<\eta/4$.  This proves
\eqref{eq:root-selection-weight}.
\end{proof}

For the Rouch\'e argument used below, it is useful to retain not only the
selected source roots from Lemma~\ref{lem:root-selection}, but also small
source discs around them.  The following elementary localization is the local
form of the root cells appearing in the proof of \cite[Lemma~3.2]{DDF12}.

\begin{lemma}\label{lem:source-preimage}
Let $J\Subset\T$ be a closed arc and let $\Delta\subset\D$ be a simply connected smoothly bounded
source domain whose boundary contains an open arc $J'\supset J$, with
$0\notin\overline\Delta$.  Let $\chi$ be holomorphic and nowhere zero on
$\Delta$, of class $C^1$ up to a neighborhood of $J$ in
$\overline\Delta$, and assume $|\chi|=1$ on $J'$.  Fix
$\beta\in\D^*$ and an open disc $B\Subset\D^*$ containing $\beta$.
Assume that a branch of $\log(\xi/\beta)$ is defined on $B$ and that
\[
 \sup_{\xi\in B}|\log(\xi/\beta)|
 <-\frac12\log|\beta|.
\]
This is a smallness condition on $B$ depending only on $\beta$.
For
\[
 a_N(z):=z^N\chi(z),
\]
let $z_N\in\Delta$ be solutions of $a_N(z_N)=\beta$ which converge to
$J$ as $N\to\infty$.  For all sufficiently large $N$ the connected
component $C_N$ of $a_N^{-1}(B)$ containing $z_N$ satisfies
$C_N\Subset\Delta$ and
\[
 a_N:C_N\longrightarrow B
\]
is biholomorphic.  In particular, components corresponding to distinct
solutions of $a_N(z)=\beta$ are pairwise disjoint.  Moreover,
\[
 \operatorname{diam} C_N=O(N^{-1})
\]
uniformly for $z_N$ in a fixed thin collar of $J$.  More precisely, set
\[
 \omega(B):=\sup_{\xi\in B}
 \bigl|\log|\xi|-\log|\beta|\bigr|.
\]
Then
\begin{equation}\label{eq:source-cell-osc}
 N\sup_{z\in C_N}
 \bigl|\log|z|-\log|z_N|\bigr|
 \le \omega(B)+o(1),
\end{equation}
where the $o(1)$ is uniform for $z_N$ in the fixed collar and
$\omega(B)\to0$ as $B$ shrinks to $\{\beta\}$.  More generally, the
biholomorphic-component assertion and the estimates are uniform for any
family of solutions contained in a fixed sufficiently thin collar whose
boundary trace is compactly contained in $J'$ and whose distance to $\T$
tends to zero uniformly as $N\to\infty$.
\end{lemma}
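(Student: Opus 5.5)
The argument is a local inverse-function analysis of $a_N=z^N\chi$ near $J$, written in the logarithmic coordinate. On a fixed thin collar $\mathcal C$ of $J$ in $\overline\Delta$, whose trace is compactly contained in $J'$, write $z=e^{w}$ with $w=s+i\vartheta$, $s=\log|z|\le0$ small. Since $\chi$ is $C^1$ up to $J$, nowhere zero, and $|\chi|=1$ on $J'$, we can choose a holomorphic branch $\kappa=\log\chi$ on the collar that is $C^1$ up to $J$. Then $\operatorname{Re}\kappa=0$ on $J'$, so $|\kappa'|$ is bounded on $\mathcal C$. In this coordinate
\[
 \log a_N(e^{w})=Nw+\kappa(e^w)=:N\Lambda_N(w),\qquad \Lambda_N(w)=w+\tfrac1N\kappa(e^w).
\]
The key steps, in order, are as follows.

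\begin{enumerate}
\item Put $\Lambda_N':=1+O(1/N)$ uniformly on the collar. For large $N$, $\Lambda_N$ is then injective on any convex subregion of width $O(1/N)$, with derivative close to $1$.
\item Let $L(\xi)$ be the branch of $\log(\xi/\beta)$ on $B$. The candidate component is
\[
 C_N=\{e^{w}: N(\Lambda_N(w)-\Lambda_N(w_N))\in L(B)\},\qquad z_N=e^{w_N}.
\]
Inverting, $w=w_N+\frac1N L(\xi)(1+O(1/N))$. The hypothesis $\sup_B|L|<-\tfrac12\log|\beta|$ gives a fixed bound $M$ for $|L|$.
\item Apply Rouch\'e's theorem, or equivalently the quantitative inverse function theorem, on the disc $|w-w_N|\le 2M/N$. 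This shows that $\xi\mapsto w(\xi)$ is a well-defined biholomorphic inverse onto a simply connected set. That set is therefore the connected component of $a_N^{-1}(B)$ through $z_N$, because $a_N^{-1}(B)$ is open and the map is a local biholomorphism there.
\item Read off $\operatorname{diam}C_N=O(1/N)$ directly from $|w-w_N|\le 2M/N$ and $|e^{w}|\le1$.
\end{enumerate}

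The containment $C_N\Subset\Delta$ is the step needing care. We need $s<0$ on $C_N$, and $e^{w}$ must stay inside the collar. Taking real parts,
\[
 N\log|z| = N s_N+\operatorname{Re}L(\xi)+O(1/N)\cdot O(1).
\]
At the root, $N s_N=\log|\beta|-\operatorname{Re}\kappa(z_N)$. Since $\operatorname{Re}\kappa$ vanishes on $J'$ and $z_N\to J$, we have $\operatorname{Re}\kappa(z_N)=o(1)$. Hence
\[
 N\log|z|\le \log|\beta|+|\operatorname{Re}L(\xi)|+o(1)\le \tfrac12\log|\beta|+o(1)<0.
\]
So $C_N$ lies strictly inside the unit circle, at distance of order $1/N$. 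Because its diameter is $O(1/N)$ and $z_N$ approaches $J$, which lies compactly in $J'$, it also stays away from the rest of $\partial\Delta$. This is exactly where the smallness condition on $B$ is used.

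The same real-part identity gives the oscillation bound. From $N(\log|z|-\log|z_N|)=\operatorname{Re}L(\xi)+O(1/N)$ and $\operatorname{Re}L(\xi)=\log|\xi|-\log|\beta|$, we obtain \eqref{eq:source-cell-osc} with an $o(1)$ that is in fact $O(1/N)$. Disjointness of components for distinct solutions follows because each component maps bijectively onto $B$, so it contains only one preimage of $\beta$.

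Uniformity holds because every constant depends only on $\sup|\kappa'|$ on the fixed collar, on $M$, and on $\sup|\operatorname{Re}\kappa|$ near $J$, which tends to $0$. These quantities are controlled for any family of roots whose distance to $\mathbb T$ tends to zero uniformly and whose traces are compactly contained in $J'$.

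I expect the main obstacle to be step 3, the quantitative inverse-function/Rouch\'e estimate on an $N$-dependent disc. The $O(1/N)$ error in $\Lambda_N'$ must be uniform up to the boundary arc, and that uniformity is exactly why $C^1$ regularity of $\chi$ up to $J$ is assumed.
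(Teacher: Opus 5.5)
Your proof is essentially the paper's: the rescaling $w=w_N+\omega/N$ in $\Lambda_N(w)=w+\frac1N\kappa(e^w)$ is the paper's substitution $z=z_Ne^{w/N}$ with $G_N(w)=w+\log\bigl(\chi(z_Ne^{w/N})/\chi(z_N)\bigr)$, followed by the same Rouch\'e inversion, the same $O(N^{-1})$ diameter bound, and the same real-part identity for the oscillation estimate. Two points need tightening. First, to identify the image of the inverse branch with the full component you need it to be relatively closed in $a_N^{-1}(B)$; this follows at once from continuity of the branch on $B$, or from building it over a slightly larger disc $B^+$ as the paper does, whereas openness and local biholomorphy alone do not give it. Second, you should check that the whole Rouch\'e disc $|w-w_N|\le 2M/N$, and not only its part over $B$, lies inside the unit disc; this holds because $M<-\frac12\log|\beta|$.
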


\begin{proof}
Choose a thin simply connected one-sided neighborhood $W\subset\Delta$ of
$J$ on which branches of $\log z$ and $\log\chi$ are fixed.  The $C^1$
boundary regularity of $\chi$ gives uniform bounds for these branches and
their first derivatives on $W$.  In particular, after fixing a
slightly larger collar, there is a constant $C_\chi$ such that
$|\chi'/\chi|\le C_\chi$ there.  This constant is independent of $N$
and of the selected root.  Since $z_N\to J$, all $z_N$ belong to $W$
for large $N$.  Put
\[
 z=z_Ne^{w/N}.
\]
Using $a_N(z_N)=\beta$, we obtain
\[
 \log\frac{a_N(z_Ne^{w/N})}{\beta}
 =w+
 \log\frac{\chi(z_Ne^{w/N})}{\chi(z_N)}.
\]
By the stated smallness condition on $B$, and since $J\Subset J'$, the
corresponding points $z_Ne^{w/N}$ remain on the inner side of $J'$ for $w$
in a fixed bounded simply connected domain $W_0$ containing
\[
 L_B:=\{\log(\xi/\beta):\xi\in\overline B\}
\]
compactly.  On $W_0$ put
\[
 G_N(w):=w+
 \log\frac{\chi(z_Ne^{w/N})}{\chi(z_N)}.
\]
On the fixed bounded set $W_0$, the second term and its first derivative
are $O(N^{-1})$, with constants depending only on the collar, $W_0$, and
$C_\chi$.  The estimates are therefore uniform over all the selected roots,
even when their number grows with $N$.
Choose a slightly smaller domain $W_1$ with
$L_B\Subset W_1\Subset W_0$.  For all large $N$, the strict Rouch\'e
inequality on $\partial W_1$ shows that, for every
$\lambda\in L_B$, the equation $G_N(w)=\lambda$ has exactly one solution in
$W_1$; moreover $G_N'(w)\ne0$ there.  Hence these solutions form a
holomorphic inverse branch over $B$.  The uniform $C^1$ convergence
$G_N\to\mathrm{id}$ yields
\[
 w=\log(\xi/\beta)+o(1)
\]
uniformly in $\xi\in B$.  Returning to $z=z_Ne^{w/N}$ gives
$\operatorname{diam}C_N=O(N^{-1})$, and taking real parts in
$\log z-\log z_N=w/N$ gives \eqref{eq:source-cell-osc}.

To identify the full component, choose a slightly larger disc $B^+$ with
$\overline B\Subset B^+\Subset\D^*$ so close to $B$ that the same strict
logarithmic smallness condition used above remains valid on $B^+$.  Carry out
the same local inverse construction over $B^+$.  The inverse branch $\iota_N$ is
holomorphic on $B^+$, takes its values in $\Delta$, and satisfies
$a_N\circ\iota_N=\mathrm{id}$.  Hence
$\iota_N(\overline B)\Subset\Delta$, and the boundary of
$\iota_N(B)$ maps to $\partial B$.  The set $\iota_N(B)$ is both open and
relatively closed in the component of $a_N^{-1}(B)$ containing $z_N$;
therefore it is that component.  Distinct components over $\beta$ are
disjoint.  This argument uses only the local inverse and does not require a
collar whose side boundary stays a fixed distance from $\T$.
\end{proof}

The gluing argument below is formulated for Cartan pairs.  We recall
the standard notion from \cite[Definition~5.7.1(I)]{Forstneric}.
\begin{definition}\label{def:cartan-pair}
Let $S$ be a complex manifold.  A pair $(K_0,K_1)$ of compact subsets of $S$
is called a \emph{Cartan pair} if $K_0$, $K_1$, $K_0\cup K_1$, and
$K_0\cap K_1$ are Stein compacta and
\[
 \overline{K_0\setminus K_1}\cap
 \overline{K_1\setminus K_0}=\varnothing.
\]
Here a Stein compact is a compact set admitting a basis of open Stein
neighborhoods.  We also use the same terminology when one of the two
source sets has finitely many components: a pair $(D_0,D_1)$ of
relatively compact smoothly bounded open sets in the source plane is
called a \emph{smoothly bounded Cartan pair} when
$(\overline D_0,\overline D_1)$ is a Cartan pair in this sense.  In
the construction below, $D_0$ is connected and $D_1$ is a finite
disjoint union of source discs.  The source pair is chosen with the
additional separation
\begin{equation}\label{eq:source-separation}
 \overline{D_0\setminus D_1}\cap
 \overline{D_1\setminus D_0}=\varnothing;
\end{equation}
this is the separation imposed in
\cite[proof of Theorem~3.1]{DDF12}.
\end{definition}

The source construction in \cite{DDF12} glues sprays over such a pair.
For the present problem, whose target is a domain in $\C^n$, a direct
additive Cousin splitting gives the needed result and its norm estimate.
This avoids a dependence of the gluing threshold on the later choice
of the integer $N$.  The central disc is used on $D_0$, and the
prescribed insertion map is used on $D_1$; their difference is made
small on the overlap.  The correction can be required to vanish at
$0\in D_0\setminus\overline{D_1}$.

\begin{lemma}\label{lem:cartan-additive}
Let $D_0,D_1$ be a smoothly bounded Cartan pair in the
source plane satisfying \eqref{eq:source-separation},
$0\in D_0\setminus\overline{D_1}$, and
$D_0\cup D_1=\D$.  Suppose that
$h$ is holomorphic on a neighborhood of $\overline\D$ and that
$g:D_1\to\C^n$ is holomorphic and continuous on $\overline{D_1}$.
There is a constant $C>0$ depending only on the fixed pair such that,
if
\[
 \|g-h\|_{\overline{D_0\cap D_1}}\le\epsilon,
\]
then there is $H\in\mathcal O(\D,\C^n)\cap
C(\overline\D,\C^n)$ satisfying
\begin{equation}\label{eq:additive-Cartan-estimate}
 H(0)=h(0),\qquad
 \|H-h\|_{\overline{D_0}}+
 \|H-g\|_{\overline{D_1}}\le C\epsilon.
\end{equation}
If both $h(\overline{D_0})$ and $g(\overline{D_1})$
lie in $\Omega$, the same construction gives $H(\overline\D)
\subset\Omega$ whenever $C\epsilon$ is smaller than their
distances from $\partial\Omega$.
\end{lemma}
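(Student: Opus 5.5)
The plan is to solve an additive Cousin problem on the source disc and correct $h$ on $D_0$ and $g$ on $D_1$ by holomorphic functions whose sizes are controlled by $\epsilon$. Put
\[
 c:=g-h\in\mathcal O(D_0\cap D_1,\C^n)\cap C(\overline{D_0\cap D_1}),
 \qquad \|c\|\le\epsilon .
\]
We seek $a_0\in\mathcal O(D_0)\cap C(\overline{D_0})$ and $a_1\in\mathcal O(D_1)\cap C(\overline{D_1})$ with
\[
 c=a_0-a_1\ \text{on }\overline{D_0\cap D_1},\qquad \|a_0\|+\|a_1\|\le C\epsilon,\qquad a_0(0)=0 .
\]
Then we set $H:=h+a_0$ on $\overline{D_0}$ and $H:=g+a_1$ on $\overline{D_1}$. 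On the overlap, $h+a_0=h+c+a_1=g+a_1$, so $H$ is well defined, holomorphic on $\D=D_0\cup D_1$, continuous on $\overline\D$, satisfies $H(0)=h(0)$, and obeys \eqref{eq:additive-Cartan-estimate}. The problem is linear and componentwise, so it suffices to treat $n=1$.

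To construct the splitting, use the separation \eqref{eq:source-separation}. Choose a smooth cutoff $\kappa$ on $\C$ with $\kappa=1$ near $\overline{D_0\setminus D_1}$ and $\kappa=0$ near $\overline{D_1\setminus D_0}$; this is possible because these two compact sets are disjoint. Then $\kappa c$ extends by $0$ to a continuous function on $\overline{D_1}$, and $(\kappa-1)c$ extends by $0$ to $\overline{D_0}$. The difference of these two extensions equals $c$ on the overlap. They fail to be holomorphic only through the $\bar\partial$-error
\[
 \bar\partial\kappa\cdot c,
\]
which is supported in a fixed compact subset of the interior of $D_0\cap D_1$, away from the two separated pieces. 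By Cauchy estimates on a slightly smaller set, and since $c$ is holomorphic, we get $\|\bar\partial\kappa\, c\|_{L^\infty}\le C_1\epsilon$ with $C_1$ depending only on $\kappa$. We solve $\bar\partial u=\bar\partial\kappa\cdot c$ with the Cauchy transform over the compact support; this gives $u$ continuous on $\C$, holomorphic off the support, and $\|u\|_\infty\le C_2\epsilon$. Now set
\[
 a_1:=\kappa c-u,\qquad a_0:=(\kappa-1)c-u .
\]
Both are holomorphic on their domains and continuous up to the closures. Their difference is $c$, and their sizes are $O(\epsilon)$.

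For the normalization, replace $(a_0,a_1)$ by $(a_0-a_0(0),\,a_1-a_0(0))$. This preserves the difference, keeps $\|a_j\|\le C\epsilon$ after enlarging $C$, and gives $a_0(0)=0$.

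The step needing care is the uniform estimate. Since $c$ is only controlled in sup norm on $\overline{D_0\cap D_1}$, we must check that the support of $\bar\partial\kappa$ lies compactly inside where $c$ is defined and bounded. Here $c$ is only continuous up to $\overline{D_0\cap D_1}$, so this may require choosing $\kappa$ with $\bar\partial\kappa$ supported in the closed overlap; we then use the bound $|c|\le\epsilon$ there directly, with no Cauchy estimate needed. Then $C$ depends only on $\kappa$, hence only on the fixed pair and not on $g$, $h$, or any later parameter.

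Finally, assume $h(\overline{D_0})$ and $g(\overline{D_1})$ lie in $\Omega$ and $C\epsilon$ is smaller than their distances to $\partial\Omega$. Every point $H(\zeta)$ lies within $C\epsilon$ of $h(\zeta)$ (if $\zeta\in\overline{D_0}$) or of $g(\zeta)$ (if $\zeta\in\overline{D_1}$). Hence $H(\overline\D)\subset\Omega$.
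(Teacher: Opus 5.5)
This is the paper's proof with your cutoff $\kappa$ equal to the paper's $1-\lambda$. The separation \eqref{eq:source-separation} supplies the cutoff, the bounded form $(\bar\partial\kappa)c$ extended by zero is inverted by the Cauchy--Green transform, and the splitting is normalized by subtracting the value at $0$.

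As written, however, the signs do not match. On the overlap your pieces satisfy $a_0-a_1=(\kappa-1)c-\kappa c=-c$, not $+c$. Hence $h+a_0=2h-g+a_1\neq g+a_1$, and $H$ is not well defined. Negating both pieces fixes this: take $a_0:=(1-\kappa)c+u$ on $D_0$ and $a_1:=u-\kappa c$ on $D_1$. These are still holomorphic because $\bar\partial u=(\bar\partial\kappa)c$, and now $a_0-a_1=c$. The rest of your argument then goes through as in the paper: the normalization at $0$, the bound $C\epsilon$, and the inclusion $H(\overline\D)\subset\Omega$.

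You should also drop your first description of the support of $(\bar\partial\kappa)c$. In the configuration the lemma is used for, $\T$ meets both $\overline{D_0\setminus D_1}$ and $\overline{D_1\setminus D_0}$. So the real function $\kappa$ takes the values $1$ and $0$ on $\T$, and it cannot satisfy $\bar\partial\kappa=0$ on any annulus $\{1-\eta<|z|<1\}$. The support of $\bar\partial\kappa$ in $\D$ therefore reaches $\T$ and is not compact in the open overlap. No Cauchy estimate is needed. Inside $\D$ the form $\bar\partial\kappa$ vanishes off $D_0\cap D_1$, so $|(\bar\partial\kappa)c|\le\|\bar\partial\kappa\|_\infty\epsilon$ directly. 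That is the bound in your later paragraph, and exactly what the paper uses.
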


\begin{proof}
Write $\delta=g-h$ on $D_0\cap D_1$.  By
\eqref{eq:source-separation}, there is a fixed smooth function
$\lambda$ on the source plane such that $0\le\lambda\le1$,
$\lambda=0$ near $\overline{D_0\setminus D_1}$, and
$\lambda=1$ near $\overline{D_1\setminus D_0}$.
Thus the derivatives of $\lambda$ within $D_0\cup D_1$ occur
only in the overlap, including when $D_1$ has several components.
On $D_0\cap D_1$ put
\[
 v=(\bar\partial\lambda)\delta,
\]
and extend $v$ by zero outside the overlap.  This is a bounded,
compactly supported $(0,1)$-form on the source plane, and
$\|v\|_\infty\le\|\bar\partial\lambda\|_\infty\epsilon$.
Its Cauchy--Green transform
\[
 u(z)=\frac{1}{\pi}\int_{\C}
         \frac{v(\xi)}{z-\xi}\,dA(\xi)
\]
is continuous, satisfies $\bar\partial u=v$ in the distribution
sense, and obeys $\|u\|_{L^\infty(\C)}\le C_1\epsilon$.
The integral estimate uses only the fixed bounded support of $v$
and the local integrability of $|z-\xi|^{-1}$; in particular,
$C_1$ does not depend on derivatives of $\delta$.

On $D_0$ extend $\lambda\delta$ by zero across
$D_0\setminus D_1$ and put $a=\lambda\delta-u$.
On $D_1$ extend $(\lambda-1)\delta$ by zero across
$D_1\setminus D_0$ and put $b=(\lambda-1)\delta-u$.
The extended functions $\lambda\delta$ on $D_0$ and
$(\lambda-1)\delta$ on $D_1$ are continuous and have distributional
$\bar\partial$-derivative $v$: their cutoff factors are constant on
neighborhoods of the portions where the overlap ends inside the union.
Since $\bar\partial u=v$, it follows that
$\bar\partial a=\bar\partial b=0$ in the sense of distributions.  Weyl's
lemma therefore shows that $a$ and $b$ are holomorphic.  On the
overlap, $a-b=\delta=g-h$.  Consequently the two definitions
\[
 H=h+a-a(0)\quad\text{on }D_0,\qquad
 H=g+b-a(0)\quad\text{on }D_1
\]
agree and give a holomorphic map on $\D$.  It extends
continuously to $\overline\D$, and $H(0)=h(0)$.
The bound for $u$, together with $|\lambda|\le1$ and
$|a(0)|\le C_1\epsilon$, proves
\eqref{eq:additive-Cartan-estimate} with a constant depending
only on the pair and the chosen cutoff.  The final assertion
follows by comparing this bound with the stated positive
distances to the boundary of $\Omega$.
\end{proof}

We can now combine the target-side insertions with the source-side construction.
\begin{lemma}\label{lem:arc}
Let $h\in\cA(\Omega,z_0)$ with $z_0\notin A$, and let
$\zeta_1,\ldots,\zeta_m\in\T$ be distinct.  Set
$x_k=h(\zeta_k)$.  Assume that for every $k$ there are
$V_k\Subset\Omega$ containing $x_k$ and a holomorphic map
$q_k:V_k\times\overline\D\to\Omega$, defined on a neighborhood
of its domain, such that $q_k(y,0)=y$ and
\[
 Q_k(0)=f(x_k)\ne0,\qquad
 Q_k(\xi):=f(q_k(x_k,\xi))\not\equiv0.
\]
Let $b_{k,1},\ldots,b_{k,m_k}\in\D^*$ be all zeros of $Q_k$ in
$\D$, counted with multiplicities.  For every $\tau>0$ there are boundary
neighborhoods $I_k$ of $\zeta_k$, depending on $h$, the local families
$q_k$, and $\tau$, but independent of the later error tolerance, with the following
property.  For any pairwise disjoint closed arcs $J_k\Subset I_k$ of positive length, containing
$\zeta_k$ and having positive separation, for every $\epsilon>0$, and for
$r<1$ greater than the moduli of all zeros of $f\circ h$ in $\D$, there is
$H\in\cA(\Omega,z_0)$, arbitrarily close to $h$ on
$r\overline\D$, such that
\begin{equation}\label{eq:arc-estimate}
 H_A(H)\le H_A(h)
 +\sum_{k=1}^m |J_k|\sum_{\nu=1}^{m_k}\log|b_{k,\nu}|
 +\tau+\epsilon.
\end{equation}
\end{lemma}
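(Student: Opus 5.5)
The plan is to follow the source-side insertion of \cite[proof of Theorem~3.1]{DDF12}. Near each $\zeta_k$ we replace $h$ by the composite $z\mapsto q_k\bigl(h(z),a_N(z)\bigr)$ with $a_N(z)=z^N\chi_k(z)$, and glue it to $h$ with Lemma~\ref{lem:cartan-additive}. We then count the new zeros of $f\circ H$ inside the root cells of Lemma~\ref{lem:source-preimage}.

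One sign observation simplifies the counting. Every zero $a\in\D$ of $f\circ H$ contributes $\ord_a(f\circ H)\log|a|\le0$ to $H_A(H)$. So it suffices to show that $f\circ H$ has at least the following zeros: zeros near the old zeros of $f\circ h$ in $r\D$, contributing at most $H_A(h)+\epsilon/3$; and, for each $k$ and $\nu$, at least $\lceil N|J_k|\rceil$-type many further zeros whose logarithms sum to at most $|J_k|\log|b_{k,\nu}|+\tau/(\sum_k m_k)$. Spurious additional zeros can only lower $H_A(H)$.

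\emph{Step 1: choice of $I_k$ and target margins, independent of $\epsilon$.} Group the $b_{k,\nu}$ by distinct values. Around each distinct value $b$, of multiplicity $\mu$, choose a small disc $B\Subset\D^*$ satisfying three conditions:
\begin{itemize}
\item the smallness condition of Lemma~\ref{lem:source-preimage};
\item $\omega(B)$ small compared with $\tau$;
\item the closures of the discs are disjoint and avoid the other zeros of $Q_k$.
\end{itemize}
Set $m_B:=\min_{\partial B}|Q_k|>0$. By continuity of $(y,\xi)\mapsto f(q_k(y,\xi))$ on $\overline{V_k}\times\overline\D$, there is $\rho_k>0$ such that $|f(q_k(y,\xi))-Q_k(\xi)|<m_B/3$ for $|y-x_k|<\rho_k$ and $\xi\in\overline\D$. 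Now take $I_k$ so small that $h(\overline{I_k})$, together with a one-sided collar of it, lies in $\{|y-x_k|<\rho_k\}\cap V_k$. These target Rouch\'e margins depend only on $h$, $q_k$ and $\tau$, as the statement requires.

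\emph{Step 2: source pair and harmonic cutoff, fixed before $N$.} Given $J_k\Subset I_k$, choose simply connected smoothly bounded domains $\Delta_k\subset\D$ that are thin one-sided neighborhoods of slightly larger arcs $J_k'\Subset I_k$, with $0\notin\overline{\Delta_k}$. Let $D_1=\bigcup_k\Delta_k$, and let $D_0$ be a connected domain containing $r\overline\D\cup\{0\}$ and everything outside slightly shrunken $\Delta_k$, such that \eqref{eq:source-separation} holds and $D_0\cup D_1=\D$. Let $u_k$ be harmonic on $\Delta_k$, with $u_k=0$ on $J_k'$ and $u_k\ge1$ on $\partial\Delta_k\cap\overline{D_0}$ away from $J_k'$, and set $\chi_k=e^{-M(u_k+i\tilde u_k)}$ with $M$ large. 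Then $|\chi_k|=1$ on $J_k'$, while $|a_N|\le|\chi_k|$ is tiny on the overlap $\overline{D_0\cap D_1}$. The pair, the cutoff $\lambda$, and hence the constant $C$ of Lemma~\ref{lem:cartan-additive} are now fixed. On $\Delta_k$ put $g(z)=q_k(h(z),a_N(z))$; this lies in $\Omega$, and $g(\overline{D_1})$ has positive distance from $\partial\Omega$ independent of $N$. Since $q_k(y,0)=y$, we get $\|g-h\|_{\overline{D_0\cap D_1}}=O(\sup_{\rm overlap}|a_N|)\to0$ as $N\to\infty$. Given $\epsilon$, take $N$ large so that this is below $\epsilon'$, where $C\epsilon'$ is smaller than the distances to $\partial\Omega$, than $m_B/3$ after composing with the Lipschitz map $f$, and than the Hurwitz margins for the zeros of $f\circ h$ on $r\overline\D$. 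Lemma~\ref{lem:cartan-additive} then gives $H\in\cA(\Omega,z_0)$ (after a slight shrinking of the disc) with $H(0)=z_0$ and $H$ $C\epsilon'$-close to $h$ on $r\overline\D$.

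\emph{Step 3: counting.} By Hurwitz on $r\overline\D\subset D_0$, the old zeros persist with the same total multiplicity and nearly the same moduli. For each distinct $b$ with multiplicity $\mu$, apply Lemma~\ref{lem:root-selection} (with $\beta=b$, $\eta$ small) to select roots $z_{N,\ell}$ of $a_N=b$ in a thin collar of $J_k$. Apply Lemma~\ref{lem:source-preimage} to obtain disjoint cells $C_{N,\ell}$ on which $a_N:C_{N,\ell}\to B$ is biholomorphic. On $\partial C_{N,\ell}$ we have $|f\circ H-Q_k\circ a_N|<m_B\le|Q_k\circ a_N|$ by Step~1 and the $C\epsilon'$ bound. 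Rouch\'e then gives $\mu$ zeros of $f\circ H$ in $C_{N,\ell}$, each with $\log|z|\le\log|z_{N,\ell}|+(\omega(B)+o(1))/N$ by \eqref{eq:source-cell-osc}. Summing over $\ell$ and using \eqref{eq:root-selection-count} and \eqref{eq:root-selection-weight}, this cell family contributes at most $\mu(|J_k|\log|b|+\eta+2|J_k|\omega(B)+o(1))$. Summing over $b$ and $k$ gives \eqref{eq:arc-estimate}: the $\omega(B)$ terms are absorbed into $\tau$ and the $\eta$, $o(1)$ terms into $\epsilon$.

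The main obstacle is the parameter order in Step~2. The Cartan constant $C$ and the Rouch\'e margins $m_B$ must not depend on $N$, while the overlap error must tend to $0$ as $N\to\infty$. The harmonic cutoff $\chi_k$, which makes $|a_N|$ uniformly small on the whole overlap (including near the endpoints of $J_k'$) while keeping $|\chi_k|=1$ on $J_k'$, is the point I would verify most carefully.
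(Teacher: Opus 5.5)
Your architecture matches the paper's: the composite $q_k(h,z^N\chi_k)$ on attached discs, the additive Cartan gluing, root cells, and Rouch\'e with the target margins fixed first. Step~2, however, has a genuine gap, exactly at the point you flagged, and the parameter order you propose there cannot work.

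Since $D_0\cup D_1=\D$ and $D_1$ is a finite union of discs attached along arcs of $\T$, connectedness of $\D$ near an endpoint $e$ of $\overline{\Delta_k}\cap\T$ forces $e\in\overline{D_0\cap D_1}$. At such points $|a_N|=|z|^N|\chi_k|=|\chi_k|$ for every $N$. Hence $\|g-h\|_{\overline{D_0\cap D_1}}$ does \emph{not} tend to $0$ as $N\to\infty$, and ``take $N$ large so that this is below $\epsilon'$'' is false. Your geometry makes it worse. $\Delta_k$ is a thin neighborhood of $J_k'$ with $u_k=0$ on all of $J_k'$, and $D_0$ contains everything outside a slightly shrunken $\Delta_k$. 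So $\overline{D_0}$ reaches the ends of $J_k'$, where $\log|\chi_k|$ has boundary value $0$. At overlap points near $e$ both $|z|^N$ and $|\chi_k|$ are close to $1$, for every $M$ and $N$. Nothing then makes $q_k(h,a_N)-h$ small there, so Lemma~\ref{lem:cartan-additive} cannot be applied.

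The missing idea is to separate the arc where $|\chi_k|=1$ from the part of $\partial\Delta_k$ touched by $\overline{D_0}$, and to buy the overlap smallness with $M$ rather than $N$. The paper does this as follows.
\begin{enumerate}
\item Take nested arcs $J_k\Subset J_k^0\Subset J_k'\Subset\partial\Delta_k\cap\T$.
\item Let $\log|\chi_k|$ be harmonic, equal to $0$ on $J_k^0$ and to $-M$ on $\partial\Delta_k\setminus J_k'$, and smooth on the two transition arcs.
\item Choose $D_0$ with $\overline{D_0}\cap J_k'=\varnothing$.
\end{enumerate}
The harmonic measure of $\partial\Delta_k\setminus J_k'$ then has a positive minimum $c$ on the compact overlap. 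This gives $|a_N|\le e^{-cM}$ and $\|g_N-h\|_{\overline{D_0\cap D_1}}\le C_qe^{-cM}$ \emph{uniformly in $N$}.

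So $M$ must be fixed after the Cartan pair and all target margins but before $N$. It must make $C C_q e^{-cM}$ smaller than the distance to $\partial\Omega$, than $m_B/(3L)$, than the old-zero margins, and than the closeness on $r\overline\D$ demanded by $\epsilon$. Only then is $N$ taken large. Correspondingly, Lemmas~\ref{lem:root-selection} and~\ref{lem:source-preimage} must be applied with $J'$ inside $J_k^0$, where $|\chi_k|=1$, not on all of $J_k'$.

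A minor omission: the final radial shrinking $H^0(\rho z)$, needed to land in $\cA(\Omega)$, moves the retained zeros and costs $-N_{\rm ret}\log\rho$. Because $N_{\rm ret}$ grows with $N$, $\rho$ has to be chosen after $N$.
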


\begin{proof}
The source-side construction follows the proof of
\cite[Theorem~3.1, especially (3.9)--(3.11)]{DDF12}.  The additional point
is that the target intersections with the divisor must be recovered by a
Rouch\'e argument.  The order in which the parameters are chosen is kept
explicit.

\smallskip
\noindent\emph{Step 1: The target zeros are separated first, and then local Rouch\'e margins are fixed.}
Let $\beta_{k,s}$ be the distinct zeros of $Q_k$ in $\D$, with
multiplicities $\mu_{k,s}$.  Around every $\beta_{k,s}$ choose pairwise disjoint open discs
$B_{k,s}\Subset\D^*$ so small that $Q_k$ has no zero on
$\partial B_{k,s}$ and no zero in $B_{k,s}$ other than
$\beta_{k,s}$.  We also require the logarithmic smallness condition in
Lemma~\ref{lem:source-preimage}; importantly, this extra restriction depends
only on $\beta_{k,s}$ and is therefore fixed before the source cutoff
$\chi_k$ is chosen.  Finally, the discs are chosen so small that
\begin{equation}\label{eq:source-cell-budget}
 \sum_s \mu_{k,s}\omega(B_{k,s})<\frac{\tau}{32}
 \qquad (k=1,\ldots,m).
\end{equation}
After shrinking a boundary neighborhood $I_k$ of $\zeta_k$, the image under
$h$ of a thin source neighborhood of $I_k$ lies compactly in $V_k$.
By shrinking $I_k$ once more, uniformly for $z$ in that source
neighborhood and $\xi\in\partial B_{k,s}$,
\[
 \bigl|f(q_k(h(z),\xi))-Q_k(\xi)\bigr|
 <\frac12\min_{\partial B_{k,s}}|Q_k|.
\]
Thus all Rouch\'e contours and their positive margins are fixed before the
final arcs $J_k\Subset I_k$ and before the large integer $N$ are chosen.

\smallskip
\noindent\emph{Step 2: The source pair is fixed before the holomorphic cutoffs.}
For the pairwise separated arcs $J_k$, choose disjoint attached source
discs $\Delta_k$ as in the proof of \cite[Theorem~3.1]{DDF12},
with $h(\overline\Delta_k)\subset V_k$.  Their closures can be
chosen mutually disjoint and in a collar disjoint from $r\overline\D$.
Choose boundary arcs
\[
 J_k\Subset \widehat J_k\Subset J_k^0\Subset J_k'
 \Subset \partial\Delta_k\cap\T.
\]
Now choose the dented central domain $D_0$, with
$r\overline\D\subset D_0$ and $\overline{D_0}\cap J_k'=\varnothing$
for every $k$, as in the cited construction.  Put
$D_1=\bigcup_k\Delta_k$.  The dent is placed so that
$D_0\cup D_1=\D$ and
\[
 \overline{D_0\setminus D_1}\cap
 \overline{D_1\setminus D_0}=\varnothing.
\]
In particular the domains, including the possibly disconnected $D_1$,
satisfy the separation required in Lemma~\ref{lem:cartan-additive}.
The cutoff $\lambda$ and constant $C$ in that lemma are determined
by this source pair, before $M$ and $N$ are chosen.
Before choosing the harmonic cutoffs, surround
the finitely many zeros of $f\circ h$ in $r\D$ by pairwise disjoint
source discs $O_\lambda\Subset D_0\setminus\overline{D_1}$.
Choose these discs so that the total multiplicity-weighted
oscillation of $\log|z|$ on them is less than $\tau/4$.  Set
\[
 c_{\rm old}:=\min_\lambda\min_{\partial O_\lambda}|f\circ h|>0,
 \qquad
 c_{\rm new}:=\frac12\min_{k,s}\min_{\partial B_{k,s}}|Q_k|>0.
\]
An empty family of old zeros or of prescribed new zeros
simply omits the corresponding margin and Rouch\'e check.
These margins depend on the fixed target discs and old-zero
neighborhoods, but not on the integer $N$.
On $\Delta_k$ put
\[
 a_k(z)=z^N\chi_k(z),\qquad \chi_k=e^{u_k+iv_k},
\]
where $u_k\le0$ is harmonic and smooth up to the boundary, equals $0$ on
$J_k^0$, equals a negative constant $-M$ on
$\partial\Delta_k\setminus J_k'$, and is smoothly interpolated on the two
short transition arcs.  Let $v_k$ be a harmonic conjugate of $u_k$.  The
harmonic-measure estimate in \cite[proof of Theorem~3.1]{DDF12} makes
$|a_k|$ as small as required on $D_0\cap\Delta_k$, uniformly in $N$.
On the fixed compact overlaps, the normalized harmonic cutoffs satisfy
$u_k\le-cM$ for some $c>0$ determined by the source pair and arcs:
these overlaps stay away from $J_k'$, and the harmonic measure of
$\partial\Delta_k\setminus J_k'$ has a positive minimum there.
Put $g_N(z)=q_k(h(z),a_k(z))$ on $\Delta_k$.  The compact sets
\[
 K_0=h(\overline{D_0}),\qquad
 K_1=\bigcup_k q_k(h(\overline{\Delta_k}),\overline\D)
\]
lie in $\Omega$ and do not depend on $M$ or $N$.
Let $d>0$ be smaller than their distances from $\partial\Omega$.
On fixed neighborhoods of these compact sets $f$ has a finite
local Lipschitz constant $L$ for pairs of sufficiently close points.
Since $q_k(y,0)=y$, compactness and
$|a_k|\le e^{-cM}$ on the overlaps give a constant $C_q$,
independent of $M$ and $N$, such that
\[
 \|g_N-h\|_{\overline{D_0\cap D_1}}\le C_q e^{-cM}.
\]
Choose $M$ so large that the bound
$C C_q e^{-cM}$ from Lemma~\ref{lem:cartan-additive}
is smaller than $d/2$ and the allowed local Lipschitz radius,
and, after multiplication by $L$,
smaller than $c_{\rm old}/2$ and $c_{\rm new}/2$
whenever the respective families are nonempty.
The resulting target and Rouch\'e margins are independent of $N$.
Now that $M$ and $\chi_k$ are fixed, shrink only the collar in which
source roots will be selected, leaving $D_0$ and $D_1$ unchanged.
Since $|\chi_k|=1$ on $J_k^0$ and every $|\beta_{k,s}|<1$, this
collar may be chosen so that
\[
 0<|\beta_{k,s}/\chi_k(z)|<1
\]
there for every $s$.
Only then choose $N$ sufficiently large for the source-root
construction and all the conditions below.

\smallskip
\noindent\emph{Step 3: Root selection is made only after the Cartan pair and cutoff are fixed.}
For each pair $(k,s)$ apply Lemma~\ref{lem:root-selection} on $\Delta_k$,
with $J=J_k$, $\chi=\chi_k$, and $\beta=\beta_{k,s}$.  The collar was chosen
in Step~2 so that $0<|\beta_{k,s}/\chi_k|<1$, and it may now be made thinner,
without changing $D_0$, $D_1$, the additive cutoff, or the gluing constant.
Choose positive numbers $\epsilon_{k,s}$ such that
\[
 \sum_{k,s}\mu_{k,s}\epsilon_{k,s}<\frac{\epsilon}{2}.
\]
For all sufficiently large $N$ we obtain selected solutions
$z_{k,s,\ell}$ of $a_k(z)=\beta_{k,s}$ satisfying
\begin{equation}\label{eq:selected-root-count}
 \#\{z_{k,s,\ell}\}\le 2N|J_k|
\end{equation}
and
\begin{equation}\label{eq:selected-root-weight}
 \sum_\ell\log|z_{k,s,\ell}|
 \le |J_k|\log|\beta_{k,s}|+\epsilon_{k,s}.
\end{equation}
This is exactly the point at which the large root-distribution integer is
chosen.  In particular, every target Rouch\'e margin and every constant in
the additive Cartan splitting was fixed before $N$.

\smallskip
\noindent\emph{Step 4: Rouch\'e's theorem transfers the source roots to divisor intersections.}
For every selected root $z_{k,s,\ell}$, apply
Lemma~\ref{lem:source-preimage} with its open boundary arc $J'$ chosen inside
the relative interior of $\widehat J_k$ and still containing $J_k$.  This is
legitimate because $u_k=0$ on $J_k^0$, hence
$|\chi_k|=1$ on a neighborhood of $\widehat J_k$ in
$\partial\Delta_k\cap\T$.  We obtain a simply connected source disc
$C_{k,s,\ell}\Subset\Delta_k$ on which
\[
 a_k:C_{k,s,\ell}\longrightarrow B_{k,s}
\]
is biholomorphic.  We take $C_{k,s,\ell}$ to be the corresponding connected component of
$a_k^{-1}(B_{k,s})$.  Hence the inverse branches corresponding to distinct
selected roots are pairwise disjoint; for distinct $s$ this also follows
from the disjointness of the target discs $B_{k,s}$.  The selected roots satisfy $|\log|z||=O(N^{-1})$ uniformly, and their
boundary traces lie compactly inside $\widehat J_k$.  Hence the uniform
form of Lemma~\ref{lem:source-preimage} applies simultaneously to all of
them.  By taking $N$ still larger, all these source discs lie in the thin
source neighborhood on which the Rouch\'e margin from Step~1 is valid.  Since $Q_k$ has the single zero
$\beta_{k,s}$ of multiplicity $\mu_{k,s}$ in $B_{k,s}$, the function
$Q_k\circ a_k$ has exactly $\mu_{k,s}$ zeros in
$C_{k,s,\ell}$, counted with multiplicity.  Rouch\'e's theorem therefore
gives exactly the same zero count for
\[
 z\longmapsto f(q_k(h(z),a_k(z))).
\]
No simplicity assumption on the divisor intersection is used here.

Let $w_{k,s,\ell,1},\ldots,
w_{k,s,\ell,\mu_{k,s}}$ denote these new zeros, counted with
multiplicity.  By \eqref{eq:source-cell-osc},
\[
 \sum_{p=1}^{\mu_{k,s}}\log|w_{k,s,\ell,p}|
 \le \mu_{k,s}\log|z_{k,s,\ell}|
 +\frac{\mu_{k,s}}{N}
  \bigl(\omega(B_{k,s})+o(1)\bigr).
\]
By \eqref{eq:selected-root-count}, there are at most $2N|J_k|$ selected roots
for each pair $(k,s)$.  Hence the source-cell oscillation contributes at most
\[
 2|J_k|\sum_s\mu_{k,s}\omega(B_{k,s})+o(1)
 <\frac{\tau|J_k|}{16}+o(1)
\]
by \eqref{eq:source-cell-budget}.  The contours and the neighborhoods $I_k$
were chosen using the same tolerance $\tau/32$ in every local model,
independently of the number and lengths of the later arcs.  After increasing $N$ once more, summing
first in $\ell$ and then in $s$, and using
\eqref{eq:selected-root-weight}, we obtain
\begin{equation}\label{eq:pre-glue-weight}
 \sum_{\mathrm{new\ zeros\ over}\ J_k}\log|z|
 \le |J_k|\sum_{\nu=1}^{m_k}\log|b_{k,\nu}|
      +\frac{\tau|J_k|}{8}+\epsilon_k,
\end{equation}
where $\epsilon_k:=\sum_s\mu_{k,s}\epsilon_{k,s}$ and
$\sum_k\epsilon_k<\epsilon/2$.

\smallskip
\noindent\emph{Step 5: Additive gluing and radial regularization preserve the retained weights.}
Apply Lemma~\ref{lem:cartan-additive} directly to the central map $h$
on $D_0$ and the map $g_N=q_k(h,a_k)$ on each component $\Delta_k$
of $D_1$.  The overlap estimate from Step~2 yields a disc $H^0$
holomorphic in $\D$, continuous on $\overline\D$, with $H^0(0)=z_0$
and $H^0(\overline\D)\subset\Omega$.  It is within
$C C_qe^{-cM}$ of $h$ on $\overline{D_0}$ and of $g_N$ on
$\overline{D_1}$, uniformly in $N$.  Since $r\overline\D\subset D_0$,
we can make it arbitrarily close to $h$ there.  By the tolerances
fixed before $N$, Rouch\'e's theorem preserves the old zero counts
in the $O_\lambda$.  On every $\partial C_{k,s,\ell}$,
Step~1 gives
\[
 |f(q_k(h(z),a_k(z)))|\ge c_{\rm new},
\]
since $a_k(\partial C_{k,s,\ell})=\partial B_{k,s}$.
Indeed, on this contour the first perturbation is less than
$\frac12\min_{\partial B_{k,s}}|Q_k|$, while the second is less than
$c_{\rm new}/2\le\frac14\min_{\partial B_{k,s}}|Q_k|$.  Hence
\[
 |f(H^0(z))-Q_k(a_k(z))|
 <\frac34\min_{\partial B_{k,s}}|Q_k|
 \le\frac34|Q_k(a_k(z))|.
\]
Rouch\'e therefore also gives exactly $\mu_{k,s}$ zeros of
$f\circ H^0$ in each $C_{k,s,\ell}$.  All these zeros stay
in the same source cell; hence \eqref{eq:source-cell-osc}
and \eqref{eq:selected-root-weight} give the bound
\eqref{eq:pre-glue-weight} directly for their logarithmic
weights.  No new smallness threshold is chosen after $N$.
The old zeros change their total logarithmic weight by less
than $\tau/4$ by the choice of the $O_\lambda$.

The class $\cA(\Omega)$ used in this paper consists of discs holomorphic on
a neighborhood of $\overline\D$.  To recover this class, choose
$\rho<1$ so close to $1$ that it is larger than the moduli of all the
finitely many old and prescribed new zeros just retained, and set
\[
 H(z):=H^0(\rho z).
\]
Then $H$ is holomorphic on $|z|<1/\rho$, maps a neighborhood of
$\overline\D$ into $\Omega$, satisfies $H(0)=z_0$, and remains
arbitrarily close to $h$ on $r\overline\D$.  Each retained zero $w$ of
$H^0$ becomes the zero $w/\rho$ of $H$.  If $N_{\rm ret}$ denotes the total
multiplicity of the finitely many retained zeros, their total logarithmic
weight increases by $-N_{\rm ret}\log\rho$.  Choosing $\rho$ still closer
to $1$ makes this increase smaller than $\tau/4$.  Any other
zeros contribute nonpositive logarithmic weights and may be discarded in an
upper estimate for $H_A(H)$.

Finally, summing \eqref{eq:pre-glue-weight} over $k$ gives an error less than
$\tau/2+\epsilon/2$.  The old-zero and radial-regularization
errors contribute less than the remaining $\tau/2$.
Together with the preserved old zeros this yields
\eqref{eq:arc-estimate}.
\end{proof}

\section{Consequences and the Hartogs hull viewpoint}\label{sec:consequences}

The bounded envelopes used in the proof also admit a Hartogs-hull
interpretation.  We record it because it connects the present
disc-functional approximation with the hull construction of \cite{Dieu05};
the source-side root distribution and gluing remain those of \cite{DDF12}.

\subsection{The Hartogs hull viewpoint}\label{sec:hartogs}

The construction in \cite[Lemmas 2.5--2.6]{Dieu05} begins with a
Hartogs compact set over $\partial\Omega$ of fiber radius $|f|$.
For compact $E\subset U\times\C$, with $U$ Stein, write
\[
 \widehat E^{\,U}:=
 \{(z,w)\in U\times\C:
   |F(z,w)|\le\sup_E|F|\text{ for every }
   F\in\mathcal O(U\times\C)\}.
\]
The ambient neighborhood matters: a biholomorphism defined near
$\overline D$ need not preserve the global polynomial hull.

For this subsection we write $\Psi$ for the convexifying biholomorphism
and reserve $\Phi$ for the Hartogs fiber function.

\begin{proposition}\label{prop:hartogs}
Assume that $D\Subset\C^n$ is convex and B-regular, that
$\Psi$ is biholomorphic from a neighborhood of $\overline D$ onto a
neighborhood of $\overline\Omega$, where $\Omega=\Psi(D)$, and retain the
normalization \eqref{eq:norm}.  Choose a convex neighborhood
$V\supset\overline D$ on which $\Psi$ is biholomorphic and such that
$f\circ\Psi$ is holomorphic, and put $U:=\Psi(V)$.  Let
\[
 K:=\{(p,w):p\in\partial\Omega,\ |w|\le |f(p)|\},
 \qquad
 r_K(z):=\sup\{|w|:(z,w)\in\widehat K^{\,U}\}.
\]
For $t>0$ set
\[
 \varphi_t:=P_\Omega\min\{-\log|f|,t\},
 \qquad
 \Phi(z):=\sup_{j\ge1}\varphi_j(z).
\]
Then
\begin{equation}\label{eq:hartogs-fiber-function}
 -\log r_K=\Phi\qquad\text{on }\Omega.
\end{equation}
In particular, $\Phi$ is lower semicontinuous.  If $A=(f)|_\Omega$ is
reduced, then
\begin{equation}\label{eq:hartogs-green-merged}
 \Phi=\widetilde G_A\quad\text{on }\Omega\setminus A,
 \qquad
 \Phi^*=\widetilde G_A\quad\text{on }\Omega.
\end{equation}
Equivalently,
\begin{equation}\label{eq:hartogs-radius-regularized}
 e^{-\widetilde G_A(z)}
 =\liminf_{\zeta\to z}r_K(\zeta),\qquad z\in\Omega,
\end{equation}
and $r_K=e^{-\widetilde G_A}$ pointwise on $\Omega\setminus A$.
\end{proposition}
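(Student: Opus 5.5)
The plan is to read the hull $\widehat K^{\,U}$ through Hartogs series in the vertical variable. This turns $-\log r_K$ into an envelope of functions $\frac1k\log|a|$ with boundary control by $-\log|f|$. That envelope will then be compared with the bounded Perron--Bremermann solutions, and Theorem~\ref{thm:main} will identify the limit.

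\emph{Step 1: the fiber radius as an envelope.} Every $F\in\mathcal O(U\times\C)$ has an expansion $F(z,w)=\sum_{k\ge0}a_k(z)w^k$ with $a_k\in\mathcal O(U)$. Since $K$ is circled in $w$, Cauchy estimates on the circles $|w|=|f(p)|$ show that $\sup_K|a_kw^k|\le\sup_K|F|$. Hence $(z,w)\in\widehat K^{\,U}$ if and only if $|a(z)|\,|w|^k\le\sup_{\partial\Omega}|a\,f^k|$ for all $k\ge0$ and all $a\in\mathcal O(U)$.

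For $k=0$ this says that $z$ lies in the $\mathcal O(U)$-hull of $\partial\Omega$. That hull is $\overline\Omega$, because $V$ is convex, $D$ is convex, and $\Psi$ transports hulls on $V$ to hulls on $U$. For $z\in\Omega$ we therefore obtain
\[
 -\log r_K(z)=\sup\Bigl\{\tfrac1k\log|a(z)|:\ k\ge1,\ a\in\mathcal O(U),\
 \tfrac1k\log|a|\le-\log|f|\ \text{on }\partial\Omega\Bigr\}=:E(z).
\]
The normalization of $a$ is absorbed into the constraint.

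\emph{Step 2: $E\le\Phi$.} Let $u=\frac1k\log|a|$ be a competitor. It is continuous with values in $[-\infty,\infty)$ and is bounded above on $\overline\Omega$, say by $M$. For every integer $j\ge M$ we have $u\le\min\{-\log|f|,j\}=\phi_j$ on $\partial\Omega$. Hence $u$ is a competitor in \eqref{eq:PB-def}, so $u\le PB_{\phi_j}=\varphi_j$ by Proposition~\ref{prop:PB-trunc}; this uses B-regularity of $\Omega$, which follows from Proposition~\ref{thm:geometry}. Therefore $E\le\Phi$.

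\emph{Step 3: $\Phi\le E$.} This is the step I expect to be the main obstacle. Fix $j$ and $\delta>0$. By B-regularity, $\varphi_j$ is continuous and plurisubharmonic on $\overline\Omega$ with boundary values $\phi_j$. I need holomorphic $a\in\mathcal O(U)$ and $k$ with $\bigl|\frac1k\log|a|-\varphi_j\bigr|<\delta$ on $\overline\Omega$, or at least with $\frac1k\log|a|\le\varphi_j+\delta$ on $\partial\Omega$ and $\frac1k\log|a|\ge\varphi_j-\delta$ at the given point.

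I would first pull back by $\Psi$ to the convex B-regular $D$. There, continuous plurisubharmonic functions on $\overline D$ are uniform limits of functions plurisubharmonic on neighborhoods of $\overline D$ (Sibony's approximation on B-regular domains, or dilation toward an interior point for convex $D$). Then I would apply a Demailly/Hörmander $L^2$ approximation by $\frac1k\log|a|$ on a convex neighborhood inside $V$, and transport the result back by $\Psi$. Using $\varphi_j\le\phi_j\le-\log|f|$ on $\partial\Omega$, the function $\frac1k\log|a|-\delta$ is a competitor in $E$. This gives $E\ge\varphi_j-2\delta$, and letting $\delta\to0$ and $j\to\infty$ gives $\Phi\le E$. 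This proves \eqref{eq:hartogs-fiber-function}.

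\emph{Step 4: the remaining conclusions.} Lower semicontinuity of $\Phi$ follows because it is an increasing supremum of continuous functions. For reduced $A$, Proposition~\ref{thm:geometry} provides an SID, so \eqref{eq:SID-limit} of Theorem~\ref{thm:main}, with $\varphi_j=v_j$, gives \eqref{eq:hartogs-green-merged}. Finally, \eqref{eq:hartogs-radius-regularized} follows from
\[
 \liminf_{\zeta\to z}r_K(\zeta)=\exp\Bigl(-\limsup_{\zeta\to z}\Phi(\zeta)\Bigr)=e^{-\Phi^*(z)}=e^{-\widetilde G_A(z)}.
\]
Together with $r_K=e^{-\Phi}=e^{-\widetilde G_A}$ on $\Omega\setminus A$, this completes the plan.
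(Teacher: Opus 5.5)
Your proposal is correct, but it takes a different route from the paper's.

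\textbf{How the paper argues.} The paper never writes $-\log r_K$ as an envelope of functions $\frac1k\log|a|$. It introduces truncated compacta $K_t$ with fiber radius $\max\{|f(p)|,e^{-t}\}$ and proves that the fiber of $\widehat K_t^{\,U}$ over $z\in\Omega$ is exactly the closed disc of radius $e^{-\varphi_t(z)}$. It then recovers $K$ by intersecting the decreasing hulls $\widehat K_j^{\,U}$.
\begin{itemize}
\item Its inclusion ``$\supseteq$'' is your Step~2 combined with the ``if'' half of your Step~1: Hartogs coefficients, Cauchy estimates, Perron--Bremermann comparison for $\frac1k\log|a_k|$, and the power trick on $F^m$.
\item Its inclusion ``$\subseteq$'' replaces your Step~3. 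The dilated functions $\beta_s$ make $\log|w|+\beta_s(z)-\delta_s$ plurisubharmonic on $U_s\times\C$. The coincidence of holomorphic and plurisubharmonic hulls for compacta in Stein manifolds, together with Runge invariance for $U_s\times\C\subset U\times\C$, then finishes.
\end{itemize}

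\textbf{What each route buys.} Your route treats $K$ directly, with no truncated sets and no limit of hulls, because each competitor is bounded on $\overline\Omega$ and so lies below some level $j$. It also gives an explicit Siciak-type formula for $-\log r_K$. The paper's route avoids constructing holomorphic functions by hand, since that work is packaged inside the hull theorem.

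\textbf{Two routine points you should write out.}
\begin{itemize}
\item In Step~1, the ``if'' direction of your monomial characterization does not follow from the termwise bounds $|a_k(z)||w|^k\le\sup_K|F|$, because their sum diverges. Put $R(z):=e^{-E(z)}$. You need $|F(z,w)|\le\sup_K|F|/(1-|w|/R(z))$ for $|w|<R(z)$, then the same bound for $F^m$ with $m\to\infty$, and finally closedness of the hull for $|w|=R(z)$.
\item In Step~3, Demailly's $\frac1{2k}\log\sum_l|\sigma_l|^2$ is not of the form $\frac1k\log|a|$. Use a single Ohsawa--Takegoshi function normalized at the given point, together with the mean-value inequality. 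That function is holomorphic only on a convex neighborhood of $\overline D$, so approximate it on $\overline D$ by polynomials in the convex coordinates to obtain $a\in\mathcal O(U)$. The approximation error is absorbed because $\varphi_j\ge0$.
\end{itemize}
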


\begin{proof}
By Proposition~\ref{thm:geometry}, $\Omega$ is B-regular and, when $A$ is reduced, the pair $(\Omega,A)$ admits an SID.  Hence
Proposition~\ref{prop:PB-trunc} gives
\[
 \varphi_t=PB_{\phi_t}\in\PSH(\Omega)\cap C(\overline\Omega),
 \qquad \varphi_t=\phi_t\quad\text{on }\partial\Omega.
\]
Fix $t>0$ and introduce, only for this proof, the truncated Hartogs compact
set
\[
 K_t:=\{(p,w):p\in\partial\Omega,
       \ |w|\le\max\{|f(p)|,e^{-t}\}\}.
\]
We claim that its holomorphic hull in $U\times\C$ satisfies
\begin{equation}\label{eq:hartogs-bounded}
 \widehat K_t^{\,U}\cap(\{z\}\times\C)
 =\{(z,w):|w|\le e^{-\varphi_t(z)}\},
 \qquad z\in\Omega.
\end{equation}

The closure of $D$ is convex and therefore holomorphically convex in $V$;
the same is true of $\overline\Omega$ in $U$.  Thus the base projection of
$\widehat K_t^{\,U}$ lies in $\overline\Omega$, and its $w$-coordinate has
modulus at most $1$ by \eqref{eq:norm}.

First take $z\in\Omega$ and $|w|<e^{-\varphi_t(z)}$.  Let
$F\in\mathcal O(U\times\C)$ satisfy $\sup_{K_t}|F|\le1$, and write
$F(z,w)=\sum_{k\ge0}a_k(z)w^k$.  The Cauchy estimate on the vertical disc
over $p\in\partial\Omega$, whose radius is $e^{-\phi_t(p)}$, gives
\[
 |a_k(p)|\le e^{k\phi_t(p)}\quad(k\ge1),
 \qquad |a_0(p)|\le1.
\]
The maximum principle yields $|a_0|\le1$ in $\Omega$.  For $k\ge1$,
$(1/k)\log|a_k|$ is plurisubharmonic and has boundary upper limit at most
$\phi_t$; Perron--Bremermann comparison gives
$|a_k(z)|\le e^{k\varphi_t(z)}$.  Hence
\[
 |F(z,w)|\le\frac{1}{1-e^{\varphi_t(z)}|w|}.
\]
Applying the same estimate to $F^m$ for every positive integer $m$ and
letting $m\to\infty$ gives $|F(z,w)|\le1$.  Thus
$(z,w)\in\widehat K_t^{\,U}$, and the boundary circle follows by closedness.

For the reverse inclusion fix $w_0\in D$ and, for small $s>0$, set
\[
 V_s:=\{\xi\in V:(1-s)\xi+sw_0\in D\},
 \qquad U_s:=\Psi(V_s).
\]
The domain $V_s$ is convex, contains $\overline D$, and is Runge in $V$;
therefore $U_s\times\C$ is Runge in $U\times\C$.  Define
\[
 \beta_s(z):=\varphi_t\bigl(
 \Psi((1-s)\Psi^{-1}(z)+sw_0)\bigr),\qquad z\in U_s,
\]
and
\[
 \delta_s:=\max\Bigl\{0,
 \max_{p\in\partial\Omega}(\beta_s(p)-\phi_t(p))\Bigr\}.
\]
Then $\beta_s\to\varphi_t$ uniformly on $\overline\Omega$ and
$\delta_s\to0$.  The plurisubharmonic function
$\log|w|+\beta_s(z)-\delta_s$ is at most zero on $K_t$.  Since holomorphic
and plurisubharmonic hulls agree for compact sets in a Stein manifold, it is
at most zero on the holomorphic hull of $K_t$ computed in
$U_s\times\C$.  By the standard invariance of holomorphic hulls under a Runge inclusion,
a point of $\widehat K_t^{\,U}$ which lies in $U_s\times\C$ also belongs to
this latter hull.  Indeed, if a function
$F\in\mathcal O(U_s\times\C)$ separated that point from $K_t$, the Runge
density of $\mathcal O(U\times\C)$ in $\mathcal O(U_s\times\C)$ would
approximate $F$ uniformly on the compact set consisting of $K_t$ and the
single point, preserving the strict separating inequality.  This would
contradict membership in $\widehat K_t^{\,U}$.  Consequently every
$(z,w)\in\widehat K_t^{\,U}$ with $z\in\Omega$ satisfies
\[
 \log|w|+\beta_s(z)\le\delta_s.
\]
Letting $s\downarrow0$ proves the reverse inclusion in
\eqref{eq:hartogs-bounded}.

As $j\to\infty$, the compact sets $K_j$ decrease to $K$.  Their holomorphic
hulls also decrease to $\widehat K^{\,U}$: if a point is outside
$\widehat K^{\,U}$, a separating function in $\mathcal O(U\times\C)$
remains separating for $K_j$ for all sufficiently large $j$.  Intersecting
the fibers in \eqref{eq:hartogs-bounded} gives
\[
 \widehat K^{\,U}\cap(\{z\}\times\C)
 =\{(z,w):|w|\le e^{-\Phi(z)}\}.
\]
This proves \eqref{eq:hartogs-fiber-function}; lower semicontinuity of
$\Phi$ follows also from $\Phi=\sup_j\varphi_j$.

If $A$ is reduced, Proposition~\ref{thm:geometry} supplies the SID required by Theorem~\ref{thm:main}, and that theorem gives
$\Phi=\widetilde G_A$ on $\Omega\setminus A$ and
$\Phi^*=\widetilde G_A$ on $\Omega$.  Since $r_K=e^{-\Phi}$, this is
equivalent to \eqref{eq:hartogs-radius-regularized}.
\end{proof}

Thus $\Phi$ is the lower semicontinuous fiber function of
\cite[Lemma~2.6]{Dieu05}, expressed as the increasing limit of the bounded
envelopes $\varphi_j$.  On the B-regular domain $\Omega$,
Proposition~\ref{prop:PB-trunc} gives $\varphi_j=PB_{\phi_j}$, so
\[
 \Phi=\sup_{j\ge1}PB_{\phi_j}.
\]
Proposition~\ref{prop:hartogs} is therefore the Hartogs-hull form of the
regularization relation in Remark~\ref{rem:PB-defect}.  In
\cite[Theorem~2.2]{Dieu05}, homogeneity of $f$ and a radial inward condition
give the comparison with the Green function.  Here the SID approximation and disc insertion give $\Phi=\widetilde G_A$ off $A$; equality
of the upper regularizations does not assert pointwise equality on $A$.

\subsection{A fiber formula}

The preceding hull discussion concerns the bounded truncations associated
with a principal divisor.  The next elementary fiber formula is inspired by
the Hartogs example in \cite[Proposition~3.1]{Dieu05}; we record it in a
slightly more flexible form allowing a holomorphic center and an arbitrary
positive multiplicity.

\begin{proposition}\label{prop:Hartogs-fibers}
Let $D\subset\C^{n-1}$ be a domain, let
$\psi\in\PSH(D)\cap C(D)$ be real valued, and let $a\in\mathcal O(D)$.
Suppose that
\[
 \Omega_{\psi,a}
 =\{(z,w)\in D\times\C:|w-a(z)|<e^{-\psi(z)}\}
\]
is bounded.  For any integer $m\ge1$ put
$A=m[\{w=a(z)\}\cap\Omega_{\psi,a}]$.  Then
\begin{equation}\label{eq:Hartogs-fiber-Green}
 G_{A,\Omega_{\psi,a}}(z,w)
 =m\bigl(\log|w-a(z)|+\psi(z)\bigr),
\end{equation}
with the value $-\infty$ on $A$.  Consequently
$G_{A,\Omega_{\psi,a}}\in C(\Omega_{\psi,a},[-\infty,0])$.
\end{proposition}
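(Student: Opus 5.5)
Write $u(z,w):=m\bigl(\log|w-a(z)|+\psi(z)\bigr)$ on $\Omega_{\psi,a}$. The plan is to show that $u$ is a competitor in \eqref{eq:defG}, and then that every competitor lies below $u$. The second step is a one-variable argument on the vertical fibers.

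\emph{$u$ is admissible.} Since $\psi\in\PSH(D)$ and $\log|w-a(z)|$ is plurisubharmonic, $u$ is plurisubharmonic. On $\Omega_{\psi,a}$ we have $|w-a(z)|<e^{-\psi(z)}$, so $u<0$ there. The function $w-a(z)$ is a reduced defining function of the smooth hypersurface $\{w=a(z)\}$, because its $w$-derivative equals $1$. Moreover $\psi$ is continuous and real valued, so its Lelong number vanishes everywhere. Hence $\nu_u=m\,\nu_{\log|w-a|}=m$ on $\{w=a\}$, and $\nu_u=\nu_A$. This gives $u\le G_{A,\Omega_{\psi,a}}$.

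\emph{Every competitor is below $u$.} Let $v\in\PSH(\Omega_{\psi,a})$ with $v\le0$ and $\nu_v\ge\nu_A$. Fix $z\in D$ and consider the fiber disc
\[
 \Delta_z=\{w:|w-a(z)|<R\},\qquad R:=e^{-\psi(z)}.
\]
The restriction $s(w):=v(z,w)$ is subharmonic or identically $-\infty$ on $\Delta_z$. In the latter case the inequality is trivial, so assume $s\not\equiv-\infty$. Restricting $v$ to a transverse smooth complex line does not decrease the Lelong number, so $s$ has Lelong number at least $m$ at the center $a(z)$. Here we use $\nu_v(z,a(z))\ge m$. Then
\[
 s(w)-m\log\frac{|w-a(z)|}{R}
\]
is subharmonic on the punctured disc and bounded above near the center, so it extends subharmonically across the center. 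It is $\le0$ at the boundary in the limsup sense, because $s\le0$ and the logarithmic term tends to $0$ there. By the maximum principle it is $\le0$ on the disc, which says $s(w)\le m\bigl(\log|w-a(z)|+\psi(z)\bigr)$. Taking the supremum over $v$ gives $G_{A,\Omega_{\psi,a}}\le u$, and hence \eqref{eq:Hartogs-fiber-Green}.

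The only delicate point is the Lelong number step. One needs that the slice $s$ really carries mass at least $m$ at $a(z)$, i.e.\ that $s\le m\log|w-a(z)|+O(1)$ near the center. I would justify this by the standard fact that $\nu_{v|_L}(x)\ge\nu_v(x)$ for any complex line $L$ through $x$ on which $v|_L\not\equiv-\infty$. Alternatively, one can use Siu's theorem to write $v\le m\log|w-a(z)|+C$ locally near points of the smooth hypersurface, which follows from $\nu_v\ge m$ along it. Continuity in the extended-real sense is then immediate: $\psi$ and $a$ are continuous, and $\log|w-a(z)|$ is continuous into $[-\infty,0)$ with value $-\infty$ exactly on $A$.
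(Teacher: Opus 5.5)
Your proof is correct and follows essentially the paper's argument. The paper first translates to $a=0$ via $(z,w)\mapsto(z,w-a(z))$, checks that $m(\log|w|+\psi)$ is a competitor, and then bounds an arbitrary competitor fiberwise: it removes the singularity of $u(z,\cdot)-m\log|w|$ and applies the one-variable maximum principle on circles $|w|=r$ with $r\uparrow e^{-\psi(z)}$, which is what you do with the boundary $\limsup$. Your justification of the slice Lelong number via $\nu_{v|_L}\ge\nu_v$ makes explicit a step the paper only states.

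One cosmetic slip: $\log|w-a(z)|$ by itself need not be negative on $\Omega_{\psi,a}$. The sum $m(\log|w-a(z)|+\psi(z))$ is continuous into $[-\infty,0)$, and that is all the continuity claim needs.
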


\begin{proof}
The holomorphic change of variable $(z,w)\mapsto(z,w-a(z))$ reduces the
proof to $a=0$.  Write $R(z)=e^{-\psi(z)}$.  The function
\[
 q(z,w)=m\bigl(\log|w|+\psi(z)\bigr)
\]
is plurisubharmonic, nonpositive on $\Omega_{\psi,0}$, and has divisorial
multiplicity $m$ along $w=0$.  It is therefore a competitor in the definition
of $G_{A,\Omega_{\psi,0}}$, proving $q\le G_A$.

Conversely, let $u$ be any competitor and fix $z\in D$.  If
$u(z,\cdot)\equiv-\infty$, there is nothing to prove on that fiber.
Otherwise $u(z,\cdot)$ is a negative subharmonic function on
$|w|<R(z)$ with a pole of order at least $m$ at $w=0$.  The Lelong-number
condition implies that $u(z,w)-m\log|w|$ is bounded above near $w=0$ and
therefore has a removable subharmonic singularity there.  On $|w|=r<R(z)$
this function is at most $-m\log r$.  The one-variable maximum principle,
followed by $r\uparrow R(z)$, gives
\[
 u(z,w)\le m\log\frac{|w|}{R(z)}
 =m\bigl(\log|w|+\psi(z)\bigr).
\]
Taking the supremum over all competitors proves
\eqref{eq:Hartogs-fiber-Green}.  The continuity assertion follows directly
from the displayed formula.
\end{proof}

\section{Sectorial inward deformations}\label{sec:sectorial-proof}

We prove Theorem~\ref{thm:main}.  The argument has two target-side models near the boundary.  At a regular point of the divisor, a submersion coordinate gives a degree-one transverse insertion.  At a singular boundary point, the deformation parameter itself is used as a Weierstrass variable.  The source-side gluing is still Lemma~\ref{lem:arc}.  Singular points of $A$ which stay in a compact subset of $\Omega$ are handled by the ordinary interior construction below.

\begin{lemma}\label{lem:compact-interior-insertion}
Let $K\Subset\Omega$ be compact.  There are finitely many open sets $V_j$
covering $A\cap K$, vectors $e_j\in\C^n$, integers $m_j\ge1$, and
constants $a_0>0$ and $L>1$ with the following property.  If
$y\in V_j$ is sufficiently close to $A$, $f(y)\ne0$, and $0<a<a_0$, then,
for every choice of
\[
 r\in[L a^{1/m_j},2L a^{1/m_j}]
\]
which avoids the moduli of the relevant Weierstrass roots, there is a
holomorphic family $q:V_y\times\D\to\Omega$, defined on a neighborhood of its
domain, with $q(z,0)=z$, such that the zeros $b_1,\ldots,b_N\in\D^*$ of
$\eta\mapsto f(q(y,\eta))$ satisfy
\begin{equation}\label{eq:compact-insertion}
 \sum_{\nu=1}^N\log|b_\nu|
 \le \log|f(y)|-\log a.
\end{equation}
The constants can be chosen uniformly over the finite family of charts.
\end{lemma}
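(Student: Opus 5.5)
Around each point $x\in A\cap K$ I will use local Weierstrass preparation and then pass to a finite subcover by compactness. Fix $x\in A\cap K$. Since $f\not\equiv0$, a generic unit vector $e$ makes $\eta\mapsto f(x+\eta e)$ vanish to some finite order $m=m_x\ge1$ at $\eta=0$. Weierstrass preparation in a polydisc chart adapted to $e$ gives a neighborhood $W_x$ of $x$, a radius $\rho_x>0$ with $y+\eta e\in\Omega$ for $y\in W_x$ and $|\eta|\le\rho_x$, and a factorization
\[
 f(y+\eta e)=E(y,\eta)\,P_y(\eta),\qquad P_y(\eta)=\prod_{i=1}^{m}(\eta-\gamma_i(y)).
\]
Here $E$ is nowhere zero with $c_1\le|E|\le c_2$, and the roots satisfy $|\gamma_i(y)|<\rho_x/4$. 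Shrink these to $V_x\Subset W_x$ and cover $A\cap K$ by finitely many $V_j=V_{x_j}$, with data $e_j,m_j,\rho_j,c_{1,j},c_{2,j}$. From here on everything is uniform over this finite family.

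For $y\in V_j$ with $f(y)\ne0$ the family will be the linear disc
\[
 q(z,\eta):=z+r\eta e_j,\qquad |\eta|<1+\text{small}.
\]
It satisfies $q(z,0)=z$ and lies in $\Omega$ for $z$ near $y$, provided $2L a_0^{1/m_j}<\rho_j/2$. The zeros of $\eta\mapsto f(q(y,\eta))$ in $\D$ are exactly $b_i=\gamma_i(y)/r$ for those roots with $|\gamma_i(y)|<r$. None are zero, because $f(y)\ne0$ means $P_y(0)\ne0$. The hypothesis that $r$ avoids the root moduli ensures no zero lies on $\T$. The count $N$ is at most $m_j$ and may be smaller.

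The estimate \eqref{eq:compact-insertion} I will derive from the evaluation at $\eta=0$,
\[
 f(y)=E(y,0)\prod_{i=1}^{m}(-\gamma_i(y)).
\]
Split the roots into the inside set $I$, where $|\gamma_i|<r$, and the outside set. Then
\[
 \sum_{i\in I}\log\frac{|\gamma_i|}{r}
 =\log|f(y)|-\log|E(y,0)|-\sum_{i\notin I}\log|\gamma_i|-|I|\log r .
\]
For outside roots, $|\gamma_i|\ge r$ and $r<1$. Hence
\[
 -\sum_{i\notin I}\log|\gamma_i|-|I|\log r\le -m_j\log r .
\]
With $r\ge La^{1/m_j}$ this gives
\[
 -m_j\log r\le -m_j\log L-\log a .
\]
Together with $-\log|E|\le-\log c_{1,j}$ the total is at most $\log|f(y)|-\log a+\bigl(-\log c_{1,j}-m_j\log L\bigr)$. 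Choosing $L>1$ so large that $m_j\log L\ge-\log c_{1,j}$ for every $j$ removes the bracket, and \eqref{eq:compact-insertion} follows.

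The only genuine obstacle is uniformity, and it has three parts.
\begin{enumerate}
\item The lower bound for $|E|$ must hold on a whole neighborhood $V_j$ and not only at $x_j$.
\item The root bound $|\gamma_i(y)|<\rho_j/4$ must hold uniformly for $y$ near $A$. Continuity of the roots in $y$ gives this, because all roots vanish at points of $A$ where the order is exactly $m_j$. Near other points of $A\cap V_j$ they remain small after shrinking $V_j$, and this is what "sufficiently close to $A$" is meant to allow.
\item The admissible $r$ must stay inside the polydisc. Taking $a_0$ small enough gives this.
\end{enumerate}
The requirement $r\le 2La^{1/m_j}$ enters only to keep the disc inside the chart. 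A uniform comparable upper bound on $r$ may matter later in the paper, but it plays no role in this inequality.
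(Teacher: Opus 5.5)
Your proposal is correct and is essentially the paper's proof. The common steps are Weierstrass preparation of $f(z+\tau e)$ along a fixed direction, a finite subcover with uniform bounds for the units, the linear family $q(z,\eta)=z+r\eta e_j$ whose zeros in $\D$ are $\gamma_i(y)/r$, and a choice of $L$ with $c_{1,j}L^{m_j}\ge1$, which gives $|E(y,0)|\prod_i\max\{|\gamma_i(y)|,r\}\ge a$. Your extra root bound $|\gamma_i(y)|<\rho_j/4$ is harmless but not needed, and it holds on a whole shrunken neighborhood of $x_j$ rather than only near $A$.
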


\begin{proof}
Fix $p\in A\cap K$ and choose a vector $e_p\in\C^n$ for which
$\tau\mapsto f(p+\tau e_p)$ is not identically zero.  Parametric Weierstrass
preparation, applied to $f(z+\tau e_p)$ near $(p,0)$, gives
\[
 f(z+\tau e_p)=U_p(z,\tau)W_p(z,\tau),
\]
where $U_p$ is nowhere zero and $W_p(z,\cdot)$ is monic of some finite degree
$m_p$.  Since $K\Subset\Omega$, after shrinking the parameter neighborhoods
there is a fixed Euclidean margin to the boundary, so the discs
$z+r\eta e_p$ remain in $\Omega$ whenever $r$ is sufficiently small.

Take a finite subcover and uniform upper and lower bounds for the units.  If
$\alpha_1(y),\ldots,\alpha_m(y)$ are the roots of the Weierstrass polynomial
in one chart and $r$ avoids their moduli, define
$q(z,\eta)=z+r\eta e_j$.  The zeros in $\D$ are
$b_\nu=\alpha_\nu(y)/r$ for $|\alpha_\nu(y)|<r$.  Hence
\[
 \sum_{|\alpha_\nu|<r}\log\frac{|\alpha_\nu|}{r}
 =\log|f(y)|-
 \log\!\left(|U_j(y,0)|\prod_{\nu=1}^{m_j}
              \max\{|\alpha_\nu(y)|,r\}\right).
\]
Choose $L$ so large that the last factor is at least $a$ whenever
$r\ge L a^{1/m_j}$.  This gives \eqref{eq:compact-insertion}.
\end{proof}

Near a regular boundary point the divisor itself supplies a transverse coordinate.
\begin{lemma}\label{lem:regular-boundary-insertion}
Let $K$ be a compact subset of $\operatorname{Reg}(A)\cap\partial\Omega$.  There are a neighborhood $U_0$ of $K$ and constants $\delta,\rho>0$ with the following property.  If $0<t<t_0$, $y\in U_0\cap\Omega$,
\[
 \dist(y,\partial\Omega)\ge ct,
 \qquad 0<|f(y)|<\delta t,
\]
then there are a neighborhood $V_y\Subset\Omega$ of $y$ and a holomorphic map $q:V_y\times\D\to\Omega$, defined on a neighborhood of its domain, with $q(z,0)=z$, for which $\eta\mapsto f(q(y,\eta))$ has one zero $b\in\D^*$ and
\begin{equation}\label{eq:regular-boundary-weight}
 \log|b|\le \log|f(y)|-\log t-\log\rho.
\end{equation}
Here $c$ is the constant in \eqref{eq:sector-depth}, after decreasing the boundary neighborhood if necessary.
\end{lemma}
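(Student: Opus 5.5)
The plan is to use the submersion coordinate of $f$ near $K$ and insert a straight complex line segment in the direction of $\nabla f$, scaled to fit inside the ball of radius comparable to $t$ that the depth hypothesis provides around $y$.

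\emph{Step 1: uniform local coordinates.} Since $df\ne0$ on the compact set $K\subset\operatorname{Reg}(A)$, we may choose a neighborhood $U_0$ of $K$ with compact closure in the fixed neighborhood $U_f$ on which $f$ is defined. On $U_0$ we fix constants $0<m\le|\nabla f|\le M$ and a bound $M_2$ for the second derivatives of $f$ on the $r_0$-neighborhood of $U_0$, for some fixed $r_0>0$. For $y\in U_0\cap\Omega$ put $e_y:=\overline{\nabla f(y)}/|\nabla f(y)|$, so that $\frac{d}{d\lambda}f(y+\lambda e_y)|_{\lambda=0}=|\nabla f(y)|\ge m$.

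\emph{Step 2: the insertion.} Set $r:=\kappa t$ with a small fixed $\kappa\le c/2$, and define
\[
 q(z,\eta):=z+r\eta e_y,\qquad z\in V_y:=B(y,ct/4).
\]
For $z\in V_y$ and $|\eta|\le1+\epsilon$ we have $|q(z,\eta)-y|<ct/4+2\kappa t<ct$. The hypothesis $\dist(y,\partial\Omega)\ge ct$ therefore gives $q\subset\Omega$ on a neighborhood of $\overline{V_y}\times\overline\D$, and $q(z,0)=z$ holds by construction. The family $q$ does not depend on the SID map; only the depth bound is used. Shrinking $t_0$ if necessary, we also have $ct<r_0$.

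\emph{Step 3: exactly one zero, and its size.} Write $F(\eta):=f(y+r\eta e_y)=f(y)+r|\nabla f(y)|\eta+E(\eta)$ with $|E(\eta)|\le M_2r^2|\eta|^2$. On $|\eta|=1$, choose $\kappa$ with $M_2\kappa\le m/4$ and $\delta\le m\kappa/4$. Then $|f(y)|+|E|<\delta t+\frac{m}{4}r\le\frac{m}{2}r<r|\nabla f(y)|$. Rouch\'e's theorem now gives exactly one zero $b\in\D$ of $F$, and $b\ne0$ because $f(y)\ne0$. To bound $b$, apply the same comparison on the circle $|\eta|=s$ with $s:=4|f(y)|/(mr)<1$: there $r|\nabla f(y)|s\ge 4|f(y)|>|f(y)|+M_2r^2s^2$, using $M_2r^2s^2\le \frac{m}{4}rs\cdot s\le|f(y)|$. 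Hence $|b|<s=4|f(y)|/(m\kappa t)$, which is \eqref{eq:regular-boundary-weight} with $\rho:=m\kappa/4$.

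\emph{Main obstacle.} The only delicate point is the bookkeeping of constants. We need $\kappa$ to be fixed by $c$, $m$ and $M_2$ alone, and $\delta$ then fixed by $\kappa$, so that all the Rouch\'e margins are uniform in $y$ and $t$, as required by the later order of parameter choices in Lemma~\ref{lem:arc}. This is also where the constant $c$ of \eqref{eq:sector-depth} enters: the neighborhood $U_0$ is shrunk inside the SID neighborhood $V$, which is the decrease of the boundary neighborhood mentioned in the statement.
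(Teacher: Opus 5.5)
Your proof is correct, but it takes a different route from the paper's.

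The paper straightens $f$. By the holomorphic submersion theorem it covers $K$ by finitely many charts $\chi_\alpha=(\chi_\alpha',f)$ whose derivatives, and those of their inverses, are uniformly bounded. It then sets $q(z,\eta)=\chi_\alpha^{-1}(\chi_\alpha'(z),f(z)+\rho t\eta)$, with $\rho$ so small that moving the last coordinate by $2\rho t$ moves points by less than $ct/4$. Consequently $f(q(z,\eta))=f(z)+\rho t\eta$ identically. The zero is $b=-f(y)/(\rho t)$, which lies in $\D^*$ once $\delta<\rho/2$, and \eqref{eq:regular-boundary-weight} holds with equality. No Rouch\'e step is needed.

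You instead insert straight discs in the direction $\overline{\nabla f(y)}$ and absorb the quadratic Taylor remainder with two Rouch\'e comparisons. This is chart-free and makes the constants explicit in terms of $\inf|\nabla f|$ and $\sup|D^2f|$. The price is an inequality instead of an identity, and a pullback $f\circ q(z,\cdot)$ that is affine only up to $O(r^2)$. That is harmless, because Lemma~\ref{lem:arc} only uses the zeros at the reference point. Both versions give $\delta,\rho$ uniform in $y$ and $t$, which is what Step~3 of the main proof requires before $A_0$ is fixed.

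Three small repairs are needed in the constants; none affects the argument.
\begin{enumerate}
\item With $\kappa\le c/2$ the inequality $ct/4+2\kappa t<ct$ is false as written. Take $\kappa\le c/4$, or use the bound $(1+\epsilon)\kappa t$ with $\epsilon$ small.
\item Since $|E|\le M_2r^2=(M_2\kappa t)\,r$, the comparison $|E|\le\frac m4 r$ on $|\eta|=1$ needs $M_2\kappa t\le m/4$, not $M_2\kappa\le m/4$. The same applies to $M_2r^2s^2\le\frac m4 rs^2$ on $|\eta|=s$. Impose $M_2\kappa t_0\le m/4$; this is still independent of $y$ and $t$.
\item You do not need to shrink $t_0$, which would alter the statement. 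For $|\eta|\le1$ the segment from $y$ to $y+r\eta e_y$ lies in the convex ball $B(y,ct)\subset\Omega$. So $M_2$ can simply be a bound for $D^2f$ on the compact set $\overline\Omega\subset U_f$.
\end{enumerate}
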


\begin{proof}
The holomorphic submersion theorem gives finitely many coordinate neighborhoods of $K$ with coordinates
\[
 \chi_\alpha=(\chi_\alpha',w_\alpha),\qquad w_\alpha=f.
\]
After shrinking, the first derivatives of these maps and their inverses are uniformly bounded.  Choose $\rho>0$ so small that changing the $w_\alpha$-coordinate by at most $2\rho t$ moves a point by less than $ct/4$.  For $y$ in a smaller chart put
\[
 q(y,\eta):=\chi_\alpha^{-1}
 \bigl(\chi_\alpha'(y),f(y)+\rho t\eta\bigr).
\]
The same formula defines a holomorphic family for $z$ near $y$, and its image is contained in $B(y,ct/2)\subset\Omega$.  If $\delta<\rho/2$, the unique zero is
$b=-f(y)/(\rho t)\in\D^*$, and \eqref{eq:regular-boundary-weight} is an equality.
\end{proof}

We next record the singular boundary model.  The point of the finite-order escape condition is precisely to make the deformation parameter a Weierstrass variable.
\begin{lemma}\label{lem:sectorial-WPT}
Let
\[
 \Sigma:=\operatorname{Sing}(A)\cap\partial\Omega.
\]
If $\Sigma\ne\varnothing$, there are finitely many open sets $V_\alpha$ covering $\Sigma$, numbers $\tau_0,u_-,u_+>0$, and integers $m_\alpha\ge1$ such that
\begin{equation}\label{eq:sectorial-WPT}
 f(S_\tau(x))=U_\alpha(x,\tau)
 \prod_{\nu=1}^{m_\alpha}(\tau-\alpha_{\alpha,\nu}(x)),
 \qquad x\in V_\alpha,\quad |\tau|<\tau_0,
\end{equation}
where the roots are understood as an unordered multiset and
\[
 u_-\le |U_\alpha(x,\tau)|\le u_+.
\]
After shrinking the sets and $\tau_0$, every $S_\tau$ is locally biholomorphic on a neighborhood of $\overline\Omega$ for $|\tau|<\tau_0$.
\end{lemma}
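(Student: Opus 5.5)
The proof applies the Weierstrass preparation theorem pointwise at each $p\in\Sigma$ to the holomorphic function $F(x,\tau):=f(S(x,\tau))$, and then uses compactness of $\Sigma$ to pass to a finite cover.

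\emph{Local step.} The map $S$ is holomorphic on $U\times\Delta_{t_0}$ and $f$ is holomorphic on $U_f$. Since $S_0=\mathrm{id}$ and $\overline\Omega\subset U_f$, continuity of $S$ and compactness of $\overline\Omega$ give a neighborhood $U'$ of $\overline\Omega$ and $\tau_1>0$ such that $S(U'\times\Delta_{\tau_1})\subset U_f$. Hence $F$ is holomorphic on $U'\times\Delta_{\tau_1}$.

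Fix $p\in\Sigma$. Then $F(p,0)=f(p)=0$. By the finite-order escape condition \eqref{eq:finite-escape}, the germ $\tau\mapsto F(p,\tau)$ is not identically zero, so it has a finite order $m_p\ge1$ at $\tau=0$. The Weierstrass preparation theorem, with parameter $x$ and variable $\tau$, gives a polydisc neighborhood $V_p\times\Delta_{\tau_p}$ of $(p,0)$ on which
\[
 F(x,\tau)=U_p(x,\tau)W_p(x,\tau).
\]
Here $U_p$ is holomorphic and nowhere zero, and $W_p(x,\tau)=\tau^{m_p}+\sum_{k<m_p}c_k(x)\tau^k$ with $c_k(p)=0$.

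Over $\C$ the polynomial $W_p(x,\cdot)$ factors as $\prod_{\nu=1}^{m_p}(\tau-\alpha_\nu(x))$, where the roots form an unordered multiset. Since the $c_k$ are continuous and vanish at $p$, after shrinking $V_p$ all roots satisfy $|\alpha_\nu(x)|<\tau_p/2$.

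Finally, shrink $V_p$ to a relatively compact neighborhood $V_p'$ and take $\tau_p'<\tau_p$. Then $\overline{V_p'}\times\overline{\Delta_{\tau_p'}}$ is compact and $U_p$ is continuous and nonvanishing on it, so $|U_p|$ is bounded above and below by positive constants there.

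\emph{Finite cover.} The set $\Sigma$ is closed: $\operatorname{Sing}$ of the extending hypersurface is a closed analytic set, and $\partial\Omega$ is compact. Hence $\Sigma$ is compact. Choose finitely many $p_1,\dots,p_s$ whose sets $V_{p_i}'$ cover $\Sigma$, and set
\[
 \tau_0=\min_i\tau_{p_i}',\qquad u_-=\min_i\bigl(\text{lower bound for }|U_{p_i}|\bigr),\qquad u_+=\max_i\bigl(\text{upper bound for }|U_{p_i}|\bigr).
\]
Put $m_\alpha=m_{p_\alpha}$. This gives \eqref{eq:sectorial-WPT} on each $V_\alpha\times\Delta_{\tau_0}$ with the stated uniform bounds on $U_\alpha$.

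\emph{Local biholomorphy.} The derivative $D_zS(z,\tau)$ is continuous and equals $I$ at $\tau=0$. By compactness there is a neighborhood $U''$ of $\overline\Omega$ and a radius $\tau_2$ such that $\det D_zS(z,\tau)\ne0$ on $\overline{U''}\times\overline{\Delta_{\tau_2}}$. The inverse function theorem then shows that every $S_\tau$ with $|\tau|<\tau_2$ is locally biholomorphic on $U''$. Replacing $\tau_0$ by $\min\{\tau_0,\tau_1,\tau_2\}$ completes the proof.

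\emph{Main obstacle.} The delicate point is uniformity. The Weierstrass neighborhoods and the unit bounds are only local, so the constants must be obtained from compactness of $\Sigma$, with each polydisc shrunk to a compact one before taking extrema. The factorization of $W_p$ as an unordered multiset is automatic and needs no continuity of individual roots. The finite-order escape condition is used only to ensure $m_p<\infty$, which is what makes the preparation theorem applicable at every point of $\Sigma$.
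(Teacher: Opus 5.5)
Your proposal is correct and follows the same route as the paper: parametric Weierstrass preparation for $(x,\tau)\mapsto f(S_\tau(x))$ at each $p\in\Sigma$, which the finite-order escape condition makes possible; then compactness of $\Sigma$ for a finite cover with uniform unit bounds after shrinking; and finally the inverse function theorem, using $D_zS_0=I$ near $\overline\Omega$. You also supply details the paper leaves implicit, namely the neighborhood on which $f\circ S$ is defined, the closedness of $\Sigma$, and shrinking to compact polydiscs before taking extrema.
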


\begin{proof}
For $p\in\Sigma$ the germ $\tau\mapsto f(S_\tau(p))$ is not identically zero by \eqref{eq:finite-escape}; let $m(p)$ be its order at zero.  Parametric Weierstrass preparation applied to $(x,\tau)\mapsto f(S_\tau(x))$ at $(p,0)$ gives \eqref{eq:sectorial-WPT} with degree $m(p)$.  Compactness of $\Sigma$ gives a finite subcover and uniform bounds for the units after shrinking.  Finally, $D_zS_0=I$ uniformly near the compact set $\overline\Omega$; hence $\det D_zS_\tau$ stays nonzero there for all sufficiently small $|\tau|$, and the holomorphic inverse function theorem gives the last assertion.
\end{proof}

The preceding factorization yields the sectorial Weierstrass insertion estimate needed below.
\begin{lemma}\label{lem:sectorial-insertion}
Fix $0<\theta'<\theta$.  Let $\tau_0$ be the parameter radius in Lemma~\ref{lem:sectorial-WPT}, and choose
\[
 0<\tau_1<\frac12\min\{\tau_0,t_0\}.
\]
Then the factorization in \eqref{eq:sectorial-WPT} is valid throughout $|\sigma|<2\tau_1$, while the deformation is defined on the larger disc $|\sigma|<t_0$.  There are $0<\kappa<1/4$, $L>1$, and $C_0>0$ with the following properties.  Let $x\in V_\alpha\cap\Omega$, let
\[
 \tau\in\Gamma_{\theta',\tau_1},\qquad t:=|\tau|,
 \qquad y:=S_\tau(x),
\]
and assume $f(y)\ne0$.
\begin{enumerate}
\item If $0<s<1$ and
\[
 2Ls^{1/m_\alpha}\le\kappa t,
\]
then one can choose
$r\in[Ls^{1/m_\alpha},2Ls^{1/m_\alpha}]$ avoiding the finitely many numbers $|\tau-\alpha_{\alpha,\nu}(x)|$ and find a neighborhood $V_y$ of $y$ and a holomorphic family $q:V_y\times\D\to\Omega$, $q(z,0)=z$, such that the zeros $b_\nu\in\D^*$ of $f(q(y,\cdot))$ satisfy
\begin{equation}\label{eq:sectorial-absolute}
 \sum_\nu\log|b_\nu|\le \log|f(y)|-\log s.
\end{equation}
\item If in addition $f(x)\ne0$, one can choose
$r\in[\kappa t/2,\kappa t]$ avoiding the same root distances and obtain
\begin{equation}\label{eq:sectorial-relative}
 \sum_\nu\log|b_\nu|
 \le \log|f(y)|-\log|f(x)|+C_0.
\end{equation}
\end{enumerate}
The constants are uniform over the finite family of singular boundary charts.
\end{lemma}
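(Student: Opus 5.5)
The plan is to insert discs in the deformation parameter itself.  For $y=S_\tau(x)$ I will use the family
\[
 q(z,\eta):=S_{\tau+r\eta}\bigl(S_\tau^{-1}(z)\bigr),
\]
where $S_\tau^{-1}$ is the local holomorphic inverse near $y$ provided by the last assertion of Lemma~\ref{lem:sectorial-WPT}.  Then $q(z,0)=z$ and $q(y,\eta)=S_{\tau+r\eta}(x)$.  By \eqref{eq:sectorial-WPT},
\[
 f(q(y,\eta))=U_\alpha(x,\tau+r\eta)\prod_\nu(\tau+r\eta-\alpha_{\alpha,\nu}(x)).
\]
The zeros in $\D$ are therefore $b_\nu=(\alpha_{\alpha,\nu}(x)-\tau)/r$ for those $\nu$ with $|\tau-\alpha_{\alpha,\nu}(x)|<r$.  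Jensen's identity at $\eta=0$ then gives, exactly as in the proof of Lemma~\ref{lem:compact-interior-insertion},
\[
 \sum_{|\tau-\alpha_\nu|<r}\log|b_\nu|
 =\log|f(y)|-\log\Bigl(|U_\alpha(x,\tau)|\prod_\nu\max\{|\tau-\alpha_\nu(x)|,r\}\Bigr).
\]
This identity is exact, because $\eta\mapsto\prod(\tau+r\eta-\alpha_\nu)$ is a polynomial whose zeros outside $\D$ contribute $\log$ of their own moduli.  The unit factor is absorbed by $u_-\le|U_\alpha|$, which holds since $|\tau+r\eta|<2\tau_1$.

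First I check that the family maps into $\Omega$.  For $|\eta|\le1$ and $r\le\kappa t$, the parameter $\tau+r\eta$ lies in the sector $\Gamma_{\theta,t_0}$ as soon as $\kappa$ is small compared with $\sin(\theta-\theta')$, since $\arg$ changes by at most about $\arcsin\kappa$.  Hence $S_{\tau+r\eta}(x)\in\Omega$ by \eqref{eq:sector-selfmap}.  For $z$ in a small neighborhood $V_y$, the same holds on a neighborhood of $\overline\D$ by openness: take a slightly larger radius and note that $\Omega$ is open, with compactness of $\overline\D$.  The map is holomorphic in $(z,\eta)$.  Avoiding finitely many moduli in the intervals is trivial.

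For (1), each factor satisfies $\max\{|\tau-\alpha_\nu|,r\}\ge r\ge Ls^{1/m_\alpha}$, so the logarithm of the product is at least $\log u_-+m_\alpha\log L+\log s$.  Choosing $L$ with $u_-L^{m_\alpha}\ge1$ (uniformly over charts) yields \eqref{eq:sectorial-absolute}.

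For (2), the key is to compare with $f(x)=f(S_0(x))=U_\alpha(x,0)\prod|\alpha_\nu(x)|$, which is exactly the value of the same factorization at $\sigma=0$.  Since $r\ge\kappa t/2$ and $|\tau|=t$,
\[
 |\alpha_\nu|\le|\alpha_\nu-\tau|+t\le(1+2/\kappa)\max\{|\tau-\alpha_\nu|,r\}.
\]
Taking the product over $\nu$ and comparing the units through $u_-\le|U_\alpha|\le u_+$ gives
\[
 \log\Bigl(|U_\alpha(x,\tau)|\prod_\nu\max\{|\tau-\alpha_\nu(x)|,r\}\Bigr)
 \ge\log|f(x)|-\log(u_+/u_-)-m_\alpha\log(1+2/\kappa).
\]
This produces \eqref{eq:sectorial-relative} with $C_0=\log(u_+/u_-)+\max_\alpha m_\alpha\log(1+2/\kappa)$.

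The main obstacle is the geometric step: showing that the disc $\{\tau+r\eta\}$ stays within the sector and within the range $|\sigma|<2\tau_1$ where both the factorization and the self-map property hold.  This is what forces $\kappa<1/4$ to depend on $\theta-\theta'$.  A second delicate point is making $q$ well defined near $y$ via the local inverse of $S_\tau$ with uniform domains.
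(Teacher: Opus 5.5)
Your proposal is correct and follows the paper's proof essentially step for step: the same family $q(z,\eta)=S_{\tau+r\eta}(S_\tau^{-1}(z))$ with $\kappa$ chosen so that the parameter disc $\tau+r\overline\D$ stays in $\Gamma_{\theta,t_0}\cap\{|\sigma|<2\tau_1\}$, the same exact product identity for the zeros, the choice $u_-L^{m_\alpha}\ge1$ for (1), and the root-ratio bound $|\alpha_{\alpha,\nu}|\le(1+2/\kappa)\max\{|\tau-\alpha_{\alpha,\nu}|,r\}$ giving the identical constant $C_0$ for (2). The only cosmetic difference is that you obtain $q(V_y\times\D)\subset\Omega$ by openness around the compact orbit $\{S_{\tau+r\eta}(x):|\eta|\le1\}$, whereas the paper shrinks $V_y$ so that $S_\tau^{-1}(V_y)\Subset\Omega$ and then applies the self-map property \eqref{eq:sector-selfmap}; both arguments are valid.
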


\begin{proof}
Choose $\kappa>0$ so small that
\[
 \tau+r\overline\D\subset\Gamma_{\theta,t_0}
 \quad\hbox{and}\quad
 |\tau+r\eta|<2\tau_1\quad(|\eta|\le1)
\]
whenever $\tau\in\Gamma_{\theta',\tau_1}$ and $0<r\le\kappa|\tau|$.  The second condition keeps the whole parameter disc inside the Weierstrass neighborhood fixed above.  Let $R_\tau$ be the local inverse of $S_\tau$ near $y$ and define
\begin{equation}\label{eq:sectorial-family}
 q(z,\eta):=S_{\tau+r\eta}(R_\tau(z)).
\end{equation}
After shrinking $V_y$, one has $R_\tau(V_y)\Subset\Omega$.  Since every parameter $\tau+r\eta$ remains in the sector of Definition~\ref{def:sectorial}, \eqref{eq:sector-selfmap} gives $q(V_y\times\D)\subset\Omega$.  The containment of the parameter disc in the larger sector is strict, so after one further shrinking the same formula is defined on a neighborhood of $V_y\times\overline\D$, as required by the disc-gluing lemma.

Write $m=m_\alpha$, $U=U_\alpha$, and $d_\nu:=|\tau-\alpha_{\alpha,\nu}(x)|$.  At the reference point $y$ the zeros of $\eta\mapsto f(q(y,\eta))$ in $\D$ are exactly
\[
 b_\nu=\frac{\alpha_{\alpha,\nu}(x)-\tau}{r},
 \qquad d_\nu<r,
\]
with multiplicity.  Hence
\begin{equation}\label{eq:sectorial-product}
 \sum_{d_\nu<r}\log|b_\nu|
 =\log|f(y)|-
 \log\!\left(|U(x,\tau)|\prod_{\nu=1}^m\max\{d_\nu,r\}\right).
\end{equation}
Choose $L$ so large that $u_-L^m\ge1$ for every degree in the finite cover.  In part (i), $r\ge Ls^{1/m}$ makes the factor inside the last logarithm at least $s$, proving \eqref{eq:sectorial-absolute}.

For part (ii), compare the Weierstrass factorizations at $0$ and at $\tau$.  If $I=\{\nu:d_\nu<r\}$, then
\begin{align*}
 &\sum_{\nu\in I}\log\frac{d_\nu}{r}
 -\bigl(\log|f(y)|-\log|f(x)|\bigr)\\
 &\qquad=-\log\left|\frac{U(x,\tau)}{U(x,0)}\right|
 +\sum_{\nu\in I}\log\frac{|\alpha_{\alpha,\nu}(x)|}{r}
 +\sum_{\nu\notin I}\log
 \frac{|\alpha_{\alpha,\nu}(x)|}{|\tau-\alpha_{\alpha,\nu}(x)|}.
\end{align*}
For every $\nu$, the inequalities $r\ge\kappa t/2$ and
$|\alpha_{\alpha,\nu}(x)|\le t+d_\nu$ give
\[
 |\alpha_{\alpha,\nu}(x)|
 \le\left(1+\frac{2}{\kappa}\right)\max\{d_\nu,r\}.
\]
Thus both root ratios in the preceding sum are at most $1+2/\kappa$.
Writing $m_*:=\max_\alpha m_\alpha$, one may take
\[
 C_0:=\log\frac{u_+}{u_-}
       +m_*\log\left(1+\frac{2}{\kappa}\right).
\]
This constant is independent of $x$, $\tau$, and the later approximation level.  Together with \eqref{eq:sectorial-product} this gives \eqref{eq:sectorial-relative}.
\end{proof}

The next elementary moving-center lemma replaces any fixed-center requirement.
\begin{lemma}\label{lem:moving-center}
Let $u$ be plurisubharmonic near $p$, with $u(p)>-\infty$, and let
$q:\Delta_{r_0}\to\C^n$ be holomorphic with $q(0)=p$ and image in that neighborhood.  If $I\subset\T$ is a fixed closed arc of normalized length $\alpha>0$, then for every sequence $t_j\downarrow0$ there are $\xi_j\in I$ such that
\begin{equation}\label{eq:moving-center}
 u(q(t_j\xi_j))\ge u(p)-o(1).
\end{equation}
\end{lemma}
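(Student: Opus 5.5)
The plan is to use the sub-mean value property of the subharmonic function $v:=u\circ q$ on the disc $\Delta_{r_0}$, together with upper semicontinuity at the origin. Since $q$ is holomorphic and $u$ is plurisubharmonic near $p$, the function $v$ is subharmonic on $\Delta_{r_0}$ and satisfies $v(0)=u(p)>-\infty$. Moreover, $v$ is upper semicontinuous at $0$, so for every $\eps>0$ there is $\delta>0$ with $v\le u(p)+\eps$ on $\Delta_\delta$. The argument then proceeds by contradiction on a single circle, with the arc $I$ carrying a fixed proportion $\alpha$ of the mass.

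Fix $\eps>0$ and suppose, for some $t=t_j<\delta$, that $\sup_{\xi\in I}v(t\xi)<u(p)-M\eps$, where $M$ is to be chosen. The circular sub-mean inequality $v(0)\le\int_\T v(t\zeta)\,d\sigma(\zeta)$ is then split over $I$ and $\T\setminus I$:
\[
 u(p)\le \alpha\,(u(p)-M\eps)+(1-\alpha)(u(p)+\eps).
\]
Rearranging gives $0\le -\alpha M\eps+(1-\alpha)\eps$, which fails as soon as $M>(1-\alpha)/\alpha$; for example $M=1/\alpha$ works. Consequently, for every $t_j<\delta(\eps)$ we have $\sup_I v(t_j\cdot)\ge u(p)-\eps/\alpha$. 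Since $v(t_j\cdot)$ is upper semicontinuous on the compact arc $I$, the supremum is attained at some $\xi_j\in I$. (Alternatively, one could pick $\xi_j$ within $\eps$ of the supremum, which avoids any attainment issue.)

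To obtain $o(1)$, let $\eps_j\downarrow0$ be the smallest admissible tolerance, defined as follows. Set $\eps(t):=\sup_{\Delta_t}v-u(p)$, which tends to $0$ as $t\to0$ by upper semicontinuity together with $v(0)=u(p)$. Applying the same computation with $\eps=\eps(t_j)$ gives $v(t_j\xi_j)\ge u(p)-\eps(t_j)/\alpha-1/j$, which is the estimate \eqref{eq:moving-center}.

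The only delicate point is the possibility that $v(t\zeta)=-\infty$ on part of the circle. This does no harm, because the sub-mean inequality still holds with the integral possibly taking the value $-\infty$, and the splitting argument only uses upper bounds on each piece. Beyond that, the proof is routine.
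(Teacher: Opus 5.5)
Your proof is correct and follows essentially the same route as the paper. Both split the circular sub-mean inequality for $u\circ q$ over $I$ and $\T\setminus I$, bound the complement by the upper-semicontinuity modulus $\sup_{|\tau|\le t}u(q(\tau))-u(p)\to0$, and pick $\xi_j$ on the compact arc by (near-)attainment of the supremum. The differences are cosmetic: you argue by contradiction and get the constant $1/\alpha$ instead of the paper's $(1-\alpha)/\alpha$, and you should take your supremum over the closed disc $|\tau|\le t$ so that the circle itself is covered.
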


\begin{proof}
Put $U(\tau)=u(q(\tau))$ and
$M(t)=\sup_{|\tau|\le t}U(\tau)$.  Upper semicontinuity and $q(0)=p$ give $M(t)\downarrow U(0)$.  The submean inequality yields
\[
 U(0)\le\int_\T U(t\xi)\,d\sigma(\xi)
 \le \alpha\sup_{\xi\in I}U(t\xi)+(1-\alpha)M(t).
\]
Thus
\[
 \sup_{\xi\in I}U(t\xi)
 \ge U(0)-\frac{1-\alpha}{\alpha}\bigl(M(t)-U(0)\bigr),
\]
and the supremum on the compact arc is attained by upper semicontinuity.
\end{proof}

\begin{proof}[Proof of Theorem~\ref{thm:main}]
Fix $p\in\Omega\setminus A$ and retain the normalization \eqref{eq:norm}.  Write
\[
 \phi=-\log|f|,\qquad \phi_j=\min\{\phi,j\},\qquad s_j=e^{-j}.
\]
Choose a smaller angle $0<\theta'<\theta$.  If the singular boundary set is nonempty, fix the constants $\tau_1,\kappa,L,C_0$ from Lemma~\ref{lem:sectorial-insertion}, let
\[
 m_*:=\max_\alpha m_\alpha,
 \qquad t_{\rm sec}:=\tau_1.
\]
If the singular boundary set is empty, put $t_{\rm sec}:=t_0$; the constants from Lemma~\ref{lem:sectorial-insertion} are then not used.

Let $\Sigma=\operatorname{Sing}(A)\cap\partial\Omega$.  For each singular
boundary chart from Lemma~\ref{lem:sectorial-WPT}, choose a smaller open core
$V_\alpha'\Subset V_\alpha$ so that the cores cover $\Sigma$.  Next choose a
small neighborhood $U_{\rm sing}$ of $\Sigma$ whose closure is contained in
the union of these cores; if $\Sigma=\varnothing$, put $U_{\rm sing}=\varnothing$.  The compact set
\[
 K_{\rm reg}:=(A\cap\partial\Omega)\setminus U_{\rm sing}
\]
is contained in $\operatorname{Reg}(A)$.  Cover it by finitely many smaller
regular cores $U_\beta'\Subset U_\beta$ to which
Lemma~\ref{lem:regular-boundary-insertion} applies.  All boundary parent
charts are chosen inside the neighborhood $V$ from
Definition~\ref{def:sectorial}.

The remaining set
\[
 K_A:=(A\cap\overline\Omega)\setminus
       \bigl(U_{\rm sing}\cup\textstyle\bigcup_\beta U_\beta'\bigr)
\]
is compactly contained in $\Omega$.  Cover it by finitely many smaller
interior cores $W_\gamma'\Subset W_\gamma\Subset\Omega$ coming from
Lemma~\ref{lem:compact-interior-insertion}.  For each nonempty cover, fix the insertion constants uniformly over its finitely many charts; in particular, denote the regular insertion constants by $\delta,\rho$.  The union of all the smaller
cores is an open neighborhood of $A\cap\overline\Omega$.  Hence there is
$\gamma_0>0$ such that
\begin{equation}\label{eq:small-f-core-cover}
 |f(z)|<\gamma_0,\quad z\in\overline\Omega
 \quad\Longrightarrow\quad
 z\in\bigcup_\alpha V_\alpha'\cup
       \bigcup_\beta U_\beta'\cup
       \bigcup_\gamma W_\gamma'.
\end{equation}

Choose $A_0>1$ so large that the singular absolute insertion inequalities hold, namely
\begin{equation}\label{eq:sectorial-scale-choice}
 2Ls^{1/m}\le\kappa A_0s^{1/m_*}
 \qquad(0<s<1,\ 1\le m\le m_*),
\end{equation}
where this condition is omitted when the singular boundary set is empty.  If there are regular boundary charts, enlarge $A_0$ further so that
$A_0\delta\ge1$ and $A_0\rho\ge1$.  When the singular boundary set is empty, put $m_*=1$ after these choices.  Finally set
\[
 t_j:=A_0s_j^{1/m_*}.
\]

By the disc formula \eqref{eq:vt-disc}, choose $g_j\in\cA(\Omega,p)$ such that
\begin{equation}\label{eq:sectorial-nearmin}
 \int_\T\phi_j(g_j)\,d\sigma
 \le v_j(p)+j^{-1}
 \le \widetilde G_A(p)+1=:C_*.
\end{equation}
For the fixed point $p$, the map $\tau\mapsto S_\tau(p)$ stays in $\Omega$ on a full disc about $0$ when $|\tau|$ is sufficiently small.  Apply Lemma~\ref{lem:moving-center} to
$u=\widetilde G_A$, $q(\tau)=S_\tau(p)$, and the arc
\[
 I:=\{e^{i\vartheta}:|\vartheta|\le\theta'\}.
\]
After discarding finitely many indices, there are
$\tau_j=t_j\xi_j\in\Gamma_{\theta',t_{\rm sec}}$ such that, with
$p_j:=S_{\tau_j}(p)$,
\begin{equation}\label{eq:sectorial-center}
 \widetilde G_A(p_j)\ge\widetilde G_A(p)-o(1).
\end{equation}
Since $p\notin A$, also $p_j\notin A$ for large $j$.  Put
\[
 h_j:=S_{\tau_j}\circ g_j\in\cA(\Omega,p_j).
\]
By \eqref{eq:sector-displacement},
\[
 \|h_j-g_j\|_{\overline\D}\le Ct_j.
\]
Since $f$ is Lipschitz on a fixed neighborhood of $\overline\Omega$, there is $M>0$ such that
\begin{equation}\label{eq:sectorial-f-close}
 |f(h_j)-f(g_j)|\le Mt_j\qquad\text{on }\overline\D.
\end{equation}

\smallskip
\noindent\emph{Step 1: The exceptional set has length $O(1/j)$.}
Put
\[
 E_j:=\{\zeta\in\T:|f(g_j(\zeta))|<s_j\}.
\]
Since the integrand in \eqref{eq:sectorial-nearmin} equals $j$ on $E_j$,
\begin{equation}\label{eq:sectorial-E}
 \sigma(E_j)\le C_*/j.
\end{equation}
Let $\eta_j=j^{-1/2}$ and define
\[
 \Psi_j:=-\log|f(h_j)|-\phi_j(g_j),
 \qquad B_j:=\{\Psi_j>\eta_j\},
\]
with the value $+\infty$ at the finitely many boundary zeros of $f\circ h_j$.  On $B_j\setminus E_j$,
\[
 |f(h_j)|<e^{-\eta_j}|f(g_j)|,
\]
and \eqref{eq:sectorial-f-close} gives
\[
 (1-e^{-\eta_j})|f(g_j)|\le Mt_j.
\]
Hence
\begin{equation}\label{eq:sectorial-fgj}
 |f(g_j)|\le C_2\frac{t_j}{\eta_j}
 \qquad\text{on }B_j\setminus E_j.
\end{equation}
Since $t_j=A_0e^{-j/m_*}$,
\[
 \phi_j(g_j)
 \ge \frac{j}{m_*}-\frac12\log j-C_3
 \qquad\text{on }B_j\setminus E_j.
\]
Together with \eqref{eq:sectorial-nearmin} and \eqref{eq:sectorial-E}, this gives
\begin{equation}\label{eq:sectorial-B}
 \sigma(B_j)=O(j^{-1}).
\end{equation}
Moreover, $|f(g_j)|\to0$ and $|f(h_j)|\to0$ uniformly on $B_j$.

\smallskip
\noindent\emph{Step 2: The bad set is localized in the fixed core charts.}
Because the closures of the cores are compactly contained in their parent
charts, there is a positive minimum Euclidean margin from every core to the
complement of its parent.  Since $\|h_j-g_j\|_{\overline\D}\to0$, and since
$|f(g_j)|$ and $|f(h_j)|$ tend uniformly to zero on $B_j$, for all large
$j$ every $g_j(\zeta)$ with $\zeta\in B_j$ lies in a smaller core and the
corresponding $h_j(\zeta)$ lies in its parent chart.  We use the priority
singular boundary, regular boundary, then compact interior when cores overlap.
In each boundary parent chart, \eqref{eq:sector-depth} gives
\begin{equation}\label{eq:sectorial-local-depth}
 \dist(h_j(\zeta),\partial\Omega)\ge c|\tau_j|=ct_j.
\end{equation}

\smallskip
\noindent\emph{Step 3: Each region recovers the logarithmic loss.}
Work away from the finitely many boundary zeros of $f\circ h_j$; short arcs around them will be removed in Step~4.

If $g_j(\zeta)$ lies in a singular boundary chart, use Lemma~\ref{lem:sectorial-insertion}.  For $\zeta\in E_j\cap B_j$, part (i) with $s=s_j$ is allowed by \eqref{eq:sectorial-scale-choice} and gives an inserted weight $W_\zeta$ satisfying
\[
 W_\zeta\le\log|f(h_j(\zeta))|-\log s_j
 =-\Psi_j(\zeta).
\]
For $\zeta\in B_j\setminus E_j$, part (ii) gives
\begin{equation}\label{eq:sectorial-singular-relative}
 W_\zeta\le-\Psi_j(\zeta)+C_0.
\end{equation}

If the point lies in a regular boundary chart, apply Lemma~\ref{lem:regular-boundary-insertion} with $t=t_j$.  On $E_j\cap B_j$ one has
$|f(h_j)|<s_j$.  Since $s_j/t_j\to0$ when $m_*>1$, and in the case $m_*=1$ the constant $A_0$ was chosen sufficiently large, the hypothesis $|f(h_j)|<\delta t_j$ holds and
\[
 W_\zeta\le-\Psi_j(\zeta)
\]
for all large $j$.  On $B_j\setminus E_j$, whenever $|f(h_j)|<\delta t_j$, the same lemma and \eqref{eq:sectorial-fgj} give
\begin{equation}\label{eq:sectorial-regular-relative}
 W_\zeta\le-\Psi_j(\zeta)+C_4+\log(1/\eta_j).
\end{equation}

Finally, in the compact interior region $W$, apply
Lemma~\ref{lem:compact-interior-insertion} with $a=s_j$ on $E_j$ and
$a=|f(g_j)|$ off $E_j$.  The parameters tend uniformly to zero on the
relevant subsets of $B_j$, and the inserted weight again satisfies
$W_\zeta\le-\Psi_j(\zeta)$.

Let $Z_j\subset\T$ be the finite set of boundary zeros of $f\circ h_j$.
Let $G_j\subset B_j\setminus Z_j$ be the union of the relatively open source
sets on which at least one of the preceding insertion families is admissible
(singular boundary, regular boundary with $|f(h_j)|<\delta t_j$, or compact
interior).  Thus $G_j$ is relatively open in $B_j\setminus Z_j$.  Define
\[
 D_j:=(B_j\setminus Z_j)\setminus G_j.
\]
By the finite core cover from Step~2, every point of $D_j$ lies in the
regular boundary region, belongs to $B_j\setminus E_j$, and satisfies
$|f(h_j)|\ge\delta t_j$; otherwise one of the three insertion constructions
would apply.  Thus \eqref{eq:sectorial-fgj} gives
\[
 0<\Psi_j\le C_5+\log(1/\eta_j)=O(\log j)
 \qquad\text{on }D_j.
\]
Together with \eqref{eq:sectorial-B},
\begin{equation}\label{eq:sectorial-uninserted}
 \int_{D_j}\Psi_j\,d\sigma
 =O\!\left(\frac{\log j}{j}\right)=o(1).
\end{equation}

\smallskip
\noindent\emph{Step 4: Arc insertion globalizes the local corrections.}
Choose a sequence $\varepsilon_j\downarrow0$.  For each fixed $j$, first remove arbitrarily short arcs around the finitely many boundary zeros of $f\circ h_j$ so that the integral of $\Psi_j^+$ over the removed arcs is at most $\varepsilon_j$.  On the remaining part of $B_j\setminus D_j$, the function $\Psi_j$ is continuous.

For every point $\zeta$ in this remaining set, choose one of the
insertion families from Step~3 and a sufficiently small relatively open
neighborhood of $\zeta$ contained in $G_j$ on which this same family remains
admissible.  First fix all target zero contours for that family.  Apply the preparatory part of Lemma~\ref{lem:arc}, with the common
tolerance $\tau=\varepsilon_j$, to obtain a boundary neighborhood
$I_\zeta$ on which the target contours and their Rouch\'e margins remain
valid.  Closed arcs centered at $\zeta$, contained in
$I_\zeta\cap G_j$, and sufficiently short that the oscillation of $\Psi_j$
is at most $\varepsilon_j$, form a Vitali cover of the remaining set.  Thus
every selected arc is contained in $B_j\setminus D_j$.  The one-dimensional
Vitali theorem gives a countable pairwise disjoint subfamily covering the
remaining set up to a null set.  Since $\Psi_j^+\in L^1(\T)$, retain finitely many arcs so that the integral over the discarded tail is at most $\varepsilon_j$, and shrink them slightly to give positive mutual separation.  Absolute continuity of the integral allows the shrinking loss to be absorbed into $\varepsilon_j$.  Denote the resulting arcs by $J_{j,k}$ and their centers by $\zeta_{j,k}$.  Then
\begin{equation}\label{eq:sectorial-vitali}
 \left|
 \sum_k |J_{j,k}|\Psi_j(\zeta_{j,k})
 -\int_{B_j\setminus D_j}\Psi_j\,d\sigma
 \right|=o(1).
\end{equation}

Let $W_{j,k}$ denote the logarithmic weight of the target roots in the local
family chosen at $\zeta_{j,k}$.  The Step~1 data in the proof of
Lemma~\ref{lem:arc} have already been fixed above.  Execute Steps~2--5 of
that proof simultaneously for this finite family, taking its remaining
error tolerance equal to $\varepsilon_j$.  This gives
$H_j\in\cA(\Omega,p_j)$ such that
\[
 H_A(H_j)
 \le H_A(h_j)+\sum_k|J_{j,k}|W_{j,k}+o(1).
\]
Thus neither the source Cartan pair nor the harmonic cutoff is chosen until
after all target contours and the final arcs have been fixed.
For the absolute insertions, $W_{j,k}\le-\Psi_j(\zeta_{j,k})$.  The singular relative insertions have the additional loss $C_0$, whose total contribution is at most $C_0\sigma(B_j)=o(1)$ by \eqref{eq:sectorial-B}.  The regular relative insertions have loss $O(\log j)$ on a set of length $O(1/j)$, and hence also contribute $o(1)$.  Combining these bounds with \eqref{eq:sectorial-vitali} and the uninserted estimate \eqref{eq:sectorial-uninserted} yields
\begin{equation}\label{eq:sectorial-recovery}
 H_A(H_j)
 \le H_A(h_j)-\int_{B_j}\Psi_j\,d\sigma+o(1).
\end{equation}
The order of choices is important: at each fixed $j$, all target contours are fixed before the source Cartan pair and harmonic cutoffs in Lemma~\ref{lem:arc}, and the large root-distribution integer is chosen last.  Thus no Rouch\'e margin is required to be uniform in a parameter selected later.

\smallskip
\noindent\emph{Step 5: Jensen's formula is used at the moving center.}
Since $h_j(0)=p_j$ and $p_j\notin A$, Jensen's formula gives
\[
 \phi(p_j)+H_A(h_j)
 =\int_\T-\log|f(h_j)|\,d\sigma.
\]
On $\T\setminus B_j$ the integrand is at most
$\phi_j(g_j)+\eta_j$, while on $B_j$ one has
$-\log|f(h_j)|-\Psi_j=\phi_j(g_j)$.  Combining these identities with \eqref{eq:sectorial-recovery} and \eqref{eq:sectorial-nearmin} gives
\[
 \phi(p_j)+H_A(H_j)
 \le v_j(p)+o(1).
\]
The Riesz-envelope formula \eqref{eq:discG} therefore yields
\[
 \widetilde G_A(p_j)\le v_j(p)+o(1).
\]
Together with \eqref{eq:sectorial-center},
\[
 \widetilde G_A(p)\le v_j(p)+o(1).
\]
Since $v_j\le\widetilde G_A$, we obtain
$v_j(p)\to\widetilde G_A(p)$ for every $p\in\Omega\setminus A$; monotonicity gives the full increasing limit.

Put $w:=\sup_jv_j$.  Since $v_j\le\widetilde G_A$ and
$\widetilde G_A$ is locally bounded above, $w^*\in\PSH(\Omega)$ and
$w^*\le\widetilde G_A$.  The pointwise convergence just proved gives
$w=\widetilde G_A$ on $\Omega\setminus A$.  Both $w^*$ and
$\widetilde G_A$ are locally bounded above plurisubharmonic extensions of
the same function across the analytic hypersurface $A$; removability and
uniqueness of the extension therefore give
\[
 w^*=\widetilde G_A\quad\text{on }\Omega.
\]
This proves \eqref{eq:SID-limit} without any B-regularity assumption.

Assume now that $\Omega$ is B-regular.  Proposition~\ref{prop:PB-trunc}
gives $v_j=PB_{\phi_j}$, hence \eqref{eq:PB-limit}.  The increasing limit of
the continuous functions $v_j$ is lower semicontinuous on
$\Omega\setminus A$, while $\widetilde G_A$ is plurisubharmonic and therefore
upper semicontinuous; thus $\widetilde G_A$ is continuous there.  Finally,
\cite[Proposition~3.2]{LS99} gives local boundedness above near $A$, and so
\[
 G_{A,\Omega}=\log|f|+\widetilde G_A\longrightarrow-\infty
 \qquad\text{as }z\to A.
\]
This proves extended-real continuity.
\end{proof}

\section*{Open questions}

The preceding arguments leave several natural problems open.  We record the ones that seem closest to the methods developed here.

\begin{enumerate}
\item Let $\Omega\Subset\C^n$ be an arbitrary bounded B-regular domain and let $A=(f)|_\Omega$ be a reduced divisor extending across the boundary.  Is $G_{A,\Omega}$ necessarily continuous in the extended-real sense?  The results above answer this for convexifiable domains and, more generally, for pairs admitting an SID.  A proof on a general B-regular domain would have to replace the global holomorphic inward motion used in the approximation argument.

\item In particular, let $\Omega\Subset\C^n$, $n\ge2$, be a strongly
pseudoconvex domain with $C^2$ boundary, and let $f$ be holomorphic and
define a reduced divisor on a neighborhood of $\overline\Omega$.
For $A=(f)|_\Omega$, is $G_{A,\Omega}$ necessarily continuous in the
extended-real sense, without assuming convexifiability or an SID?
A stronger approximation question is whether
\[
 P_\Omega\min\{-\log|f|,j\}\nearrow\widetilde G_A
 \quad\text{on }\Omega\setminus A.
\]
The arguments of this paper do not settle these questions for an arbitrary
strongly pseudoconvex domain, even when its boundary is $C^\infty$.
The difficulty is to replace the global sectorial deformation while
retaining the quantitative control needed for divisor intersections near
the boundary.

\item What remains true when the pole set is an analytic set of codimension at least two, or more generally is defined by a coherent ideal sheaf?  The disc-functional definition of Green functions already exists in this setting, but the present insertion mechanism is intrinsically divisorial: it reduces the lost mass to zeros of one-variable Weierstrass polynomials.  A higher codimensional analogue would require a replacement for these scalar root weights which is stable under source-side gluing.

\item Which geometric conditions imply the SID property?  Example~\ref{ex:KN-sectorial} shows that convexifiability is not necessary.  It would be useful to identify broad classes of non-convexifiable domains carrying a global holomorphic flow which is inward near the boundary trace of the divisor and has finite-order escape at its singular boundary points.
\end{enumerate}

These questions also indicate a possible division of labor between the two
methods appearing in the paper.  The Hartogs-hull viewpoint packages the
boundary singularity into a geometric fiber function, while the disc method
recovers the missing logarithmic mass locally.  A setting in which these two
constructions can be compared beyond the hypotheses of the present theorems may provide a route beyond the present results.

\end{document}